\tikzset{
    triple/.style args={[#1] in [#2] in [#3]}{
        #1,preaction={preaction={draw,#3},draw,#2}
    },
    dt/.style={
        radius=0.1
    }
}      
\theoremstyle{plain}
\newtheorem{thm}{Theorem}[section]
\newtheorem{lem}[thm]{Lemma}
\newtheorem{prop}[thm]{Proposition}
\newtheorem{cor}[thm]{Corollary}
\theoremstyle{definition}
\newtheorem{defn}[thm]{Definition}
\newtheorem*{ack}{Acknowledgement}
\newtheorem{example}[thm]{Example}
\newtheorem{conj}[thm]{Conjecture}
\newtheorem{question}[thm]{Question}
\theoremstyle{remark}
\newtheorem*{rmk}{Remark}
\numberwithin{equation}{section}
\newcommand{\bC}{\mathbb{C}}
\newcommand{\bF}{\mathbb{F}}
\newcommand{\bG}{\mathbb{G}}
\newcommand{\bN}{\mathbb{N}}
\newcommand{\bQ}{\mathbb{Q}}
\newcommand{\bZ}{\mathbb{Z}}
\newcommand{\bP}{\mathbb{P}}
\newcommand{\kk}{\mathbf{k}}
\newcommand{\cB}{\mathcal{B}}
\newcommand{\cX}{\mathcal{X}}
\newcommand{\cO}{\mathcal{O}}
\newcommand{\cP}{\mathcal{P}}
\newcommand{\cN}{\mathcal{N}}
\newcommand{\fB}{\mathfrak{B}}
\newcommand{\fg}{\mathfrak{g}}
\newcommand{\fp}{\mathfrak{p}}
\newcommand{\fP}{\mathfrak{P}}
\newcommand{\vv}{\mathbbm{v}}
\newcommand{\ww}{\mathbbm{w}}
\newcommand{\Fq}{\mathbb{F}_q}
\newcommand{\Ftbar}{\overline{\mathbb{F}_2}}
\newcommand{\und}[1]{\underline{#1}}
\newcommand{\br}[1]{\left\langle{#1}\right\rangle}
\newcommand{\qlbar}{{\overline{\mathbb{Q}_\ell}}}
\newcommand{\pch}[2]{\mathord{\downarrow}^{\left(#1\right)}_{\left(#2\right)}}
\newcommand{\chicrit}{\mathbf{X}_{\textup{crit}}}
\newcommand{\tsp}[2]{\textup{\textbf{TSp}}_{#1}\left(#2\right)}
\newcommand{\tspq}[1]{\textup{\textbf{TSp}}_q\left(#1\right)}
\newcommand{\tspx}[1]{\textup{\textbf{TSp}}_x\left(#1\right)}
\newcommand{\chiq}[1]{\mathcal{X}_q\left(#1\right)}
\newcommand{\chibq}[1]{\overline{\mathcal{X}}_q\left(#1\right)}
\newcommand{\spn}[1]{\textup{span}\left(#1\right)}
\newcommand{\qe}[1]{\left\lceil{#1}\right\rfloor_q}
\newcommand{\xe}[1]{\left\lceil{#1}\right\rfloor_x}
\newcommand{\ns}{\mathnormal{ns}}
\newcommand{\orangeboxwlabel}[4]{
	\draw [line width=0.7mm] (#1,#2) rectangle (#1+#3,#2-#4);
	\draw [help lines] (#1,#2) grid (#1+#3,#2-#4);
	\foreach \x in {1,...,#3}
		\foreach \y in {1,...,#4}
			\node at (#1+\x-0.5, #2-\y+0.5) {$\vv^{\y}_{#4,\x}$};
}
\newcommand{\yellowfill}[2]{
	\fill [black!40!white] (#1) rectangle (#2);
}
\newcommand{\baseline}[1]{
	\draw [line width=0.7mm, dashed] (0-0.5,0) -- (#1+0.5,0);
}
\DeclareMathOperator{\End}{\textup{End}}
\DeclareMathOperator{\im}{\textup{im}} 
\DeclareMathOperator{\Res}{\textup{Res}}
\DeclareMathOperator{\tr}{\textup{tr}}
\DeclareMathOperator{\Ad}{\textup{Ad}}
\DeclareMathOperator{\Lie}{\textup{Lie}}
\title[On total Springer representations for $\Lie Sp_{2n}(\Ftbar)$ and the exotic case]{On total Springer representations for the symplectic Lie algebra in characteristic 2 and the exotic case}
\author{Dongkwan Kim}
\address{School of Mathematics\\
  University of Minnesota Twin Cities\\
  Minneapolis, MN 55455\\
  U.S.A.}
\email{kim00657@umn.edu}
\date{\today}							
\begin{document}
\begin{abstract} 
Let $W$ be the Weyl group of type $BC_n$.
We first provide restriction formulas of the total Springer representations for the symplectic Lie algebra in characteristic 2 and the exotic case to the maximal parabolic subgroup of $W$ which is of type $BC_{n-1}$.
Then we show that these two restriction formulas are equivalent, and discuss how the results can be used to examine the existence of affine pavings of Springer fibers corresponding to the symplectic Lie algebra in characteristic 2.\end{abstract}

\setcounter{tocdepth}{1}
\maketitle

\renewcommand\contentsname{}
\tableofcontents

\setlength{\parindent}{15pt} 	
\setlength{\parskip}{5pt} 	
\section{Introduction}
This is a continuation of the author's previous papers \cite{kim:euler, kim:total, kim:betti}. In \cite{kim:betti}, we improved the machinery in \cite{kim:euler} in order to provided a restriction formula of (graded) total Springer representations for classical types in good characteristic to a certain maximal parabolic subgroup. The main goal of this paper is to extend this method to some other cases, namely to the Springer theory of $\Lie Sp_{2n}(\Ftbar)$ and to the exotic Springer theory.

It is known that the Springer theory in bad characteristic behaves differently from good characteristic cases. For types $B$, $C$, and $D$ in characteristic 2, there are Springer theories that correspond to Lie groups, Lie algebras, and duals of Lie algebras, which are different from good characteristic cases. (Among these 9 cases, the Springer theory for the Lie groups $Sp_{2n}(\Ftbar)$ and $SO_{2n+1}(\Ftbar)$ coincide which follows from the existence of the isogeny $SO_{2n+1}(\Ftbar) \rightarrow Sp_{2n}(\Ftbar)$.) The Springer correspondences for the cases above are already known; see \cite{lus84} for classical types in good characteristic, \cite{ls85} for classical Lie groups in bad characteristic, and \cite{spa82} and \cite{xue12:comb} for classical Lie algebras and their duals in bad characteristic.

In the first part of this paper, we discuss the Springer theory of symplectic Lie algebras in characteristic 2, i.e. $\Lie Sp_{2n}(\Ftbar)$. This case is considered the simplest among the classical cases in bad characteristic, mainly since the component group of the stabilizer of any nilpotent element is connected. Also, in this case the Springer correspondence is a bijection form nilpotent orbits to irreducible characters of the Weyl group. (In general, this is only an injection even one includes the data of the component groups of the stabilizers of nilpotent elements.) This in turn causes geometry of corresponding Springer fibers, which needs to be studied in order to apply the method in \cite{kim:betti}, to be simpler than other cases in bad characteristic. (It is likely that similar argument can be applied to other cases, but it seems more complicated than $\Lie Sp_{2n}(\Ftbar)$.) Here our first main result is Theorem \ref{thm:main}, which gives an analogue of the main result of \cite{kim:betti} for $\Lie Sp_{2n}(\Ftbar)$.

In the second part, we discuss the Springer theory of the exotic nilpotent cone defined by Kato \cite{kat09}. The main result in this part is Theorem \ref{thm:mainexo}, which gives an analogue of the main result of \cite{kim:betti} in this exotic setting. The reason we study exotic Springer theory rather than other ``classical'' Springer theory is that, even though different at first sight, it shares a lot of similarities with that of $\Lie Sp_{2n}(\Ftbar)$. Resemblance of these two theories was observed in \cite{kat17} using deformation argument of an exotic nilpotent cone, and later it was revealed that they are based on the same type of combinatorics. Inspired by this point of view, the first two parts in this paper are written down to be as similar to each other as possible so that the readers can easily compare the main tools for these two theories.

In the last part, we give some remarks about our results. Firstly in Section \ref{sec:equiv} we analyze the aforementioned similarities between two Springer theories and show that our two main theorems are in fact equivalent under a certain bijection which comes from combinatorics of limit symbols studied by Shoji \cite{sho04}. In Section \ref{sec:example}, we provide some examples how the main theorems are used in small ranks. In Section \ref{sec:affinepav}, we argue how our main theorems can be used to examine the existence of affine pavings of Springer fibers for $\Lie Sp_{2n}(\Ftbar)$. (It is proved by \cite{mau17} that an affine paving exists for any exotic Springer fiber.) Then in Section \ref{sec:question} we conclude with some questions which naturally arise from our results.

\begin{ack} The author is grateful to George Lusztig for helpful comments on this topic.
\end{ack}

\section{Definitions and notations}\label{sec:defnot}
First we recall definitions and notations which are frequently used in this paper.

\subsection{Weyl groups}\label{sec:weyl} Throughout this paper we fix an integer $n \in \bZ_{>0}$. We set $W$ to be the Weyl group of type $BC_n$, which is a Coxeter group with the set of simple reflections $S=\{s_1, s_2, \ldots, s_{n}\}\subset W$ such that $(s_1s_2)^4=(s_is_{i+1})^3=id$ for $2 \leq i \leq n-1$ and $(s_i s_j)^2=id$ for any $1\leq i,j \leq n$ such that $|i-j|>1$. Set $W' \subset W$ to be the maximal parabolic subgroup of $W$ generated by $\{s_1, \ldots, s_{n-1}\}$. Then $W'$ is a Coxeter group of type $BC_{n-1}$. The main theorems in this paper are to calculate the restrictions of some total Springer representations of $W$ to $W'$.

\subsection{$\ell$-adic cohomology and geometric Frobenius}
For a variety $X$, we define $\qlbar_X$ to be the constant $\qlbar$-sheaf on $X$ where $\ell$ is a prime different from the characteristic of $X$. Also let $H^i(X) = H^i(X, \qlbar)$ be the $i$-th $\ell$-adic cohomology group of $X$ and set $H^*(X) = \sum_{i\in \bZ}(-1)^i H^i(X)$ as a virtual $\qlbar$-vector space. If a variety $X$ is defined over $\Fq$, then it is naturally equipped with a geometric Frobenius morphism $F=F_X: X \rightarrow X$, which also induces an endomorphism $F^*=F_X^* : H^i(X) \rightarrow H^i(X)$. We set $X^F$ to be the set of points in $X$ fixed by $F$.

\subsection{Partitions}\label{sec:partition} We say that $\lambda$ is a partition of $n$ and write $\lambda \vdash n$ if $\lambda=(\lambda_1, \lambda_2, \ldots, \lambda_l)$ is a finite integer sequence such that $\lambda_1\geq \lambda_2 \geq \cdots\geq \lambda_l>0$ and $\sum_{i=1}^l \lambda_i=n$. In such case we also set $|\lambda|=n$ and $l(\lambda)=l$, called the size and the length of $\lambda$, respectively. If $k>l$, then we set $\lambda_k=0$. 

We define $m_\lambda: \bZ_{>0} \rightarrow \bN$ to be a function such that $m_\lambda(r)$ is the number of parts in $\lambda$ equal to $r$. For example, if $\lambda=(6,4,4,3)$ then $m_\lambda(6)=m_\lambda(3)=1, m_\lambda(4)=2$, and $m_\lambda(r)=0$ otherwise. Also we set $m_\lambda(\geq r) \colonequals \sum_{r'\geq r}m_\lambda(r')$ and define $m_\lambda(>r), m_\lambda(\leq r), m_\lambda(<r)$ analogously. If there is no confusion, then we often omit $\lambda$ and write $m$ instead of $m_{\lambda}$. In such a case, we also write $m_r, m_{\geq r}$, etc. instead of $m(r), m(\geq r)$, etc. to simplify notations. 

Define $\und{\lambda}\subset \bZ_{>0}$ (``underlying set'') to be the set of parts in $\lambda$ (without repetition). Note that $m_\lambda(r) \neq 0$ if and only if $r \in \und{\lambda}$. For partitions $\lambda=(\lambda_1, \lambda_2, \ldots)$ and $\mu=(\mu_1, \mu_2, \ldots)$, we set $\lambda \cup \mu \vdash |\lambda|+|\mu|$ to be the partition satisfying $m_{\lambda\cup \mu}=m_\lambda+m_\mu$. In particular, we have $\und{\lambda\cup\mu} = \und{\lambda} \cup \und{\mu}$. Also we define $\lambda+\mu=(\lambda_1+\mu_1, \lambda_2+\mu_2, \ldots)$.

For $\{a_1, \ldots, a_k\} \subset \lambda$ (as a multiset) and $b_1, \ldots, b_k \in \bN$ (possibly with repetition), we set $\lambda\pch{a_1, \ldots, a_k}{b_1, \ldots, b_k}$ to be the partition obtained from $\lambda$ by substituting $a_1, \ldots, a_k$ with $b_1, \ldots, b_k$ (and reordering the result if necessary). For example, we have $(6,4,4,3)\pch{4,3}{2,2} = (6,4,2,2)$ and $(6,4,4,3)\pch{4,4}{2,0} = (6,3,2)$. Also for $1\leq a\leq b$ (resp. $1\leq a \leq b \leq l(\lambda)$), we write $\lambda^{+[a,b]}$ (resp. $\lambda^{-[a,b]}$) to be the partition obtained from $\lambda$ by increasing (resp. decreasing) the $i$-th part of $\lambda$ by 1 for each $a\leq i \leq b$ (and reordering the result if necessary). For example, we have $(4,4,3,2)^{+[2,6]} = (5,4,4,3,1,1)$, $(5,4,3,2)^{-[2,4]}= (5,3,2,1)$, and $(3,3,3,3)^{-[2,3]}= (3,3,2,2)$.

\subsection{Miscellaneous} \label{sec:misc}
For a set $X$, we denote by $\#X$ the cardinal of $X$. For a subset $X$ of a certain vector space, we write $\spn{X}$ to be the linear span of $X$ which is a vector subspace. For $a, b \in \bZ$, we define $[a,b]\colonequals \{c \in \bZ \mid a\leq c\leq b\}$. For a set $X$, we denote by $\cP(X)$ the power set of $X$. To improve readability, we write $\qe{t}$ for $q^t$ and $\xe{t}$ for $x^t$.

\part{Total Springer representations for $\Lie Sp_{2n}$ in characteristic 2}
Our goal in this part is to prove Theorem \ref{thm:main} which gives a restriction formula of the total Springer representations of $\Lie Sp_{2n}(\Ftbar)$ to $W' \subset W$.

\section{Setup}\label{sec:setup}
In this part, $\kk$ denotes the algebraic closure of $\mathbb{F}_2$. Let $q$ be a power of 2. 
Set $G$ to be the symplectic group $Sp_{2n}$ defined over $\kk$ that is split over $\Fq$. We regard $G$ as an automorphism group of a fixed $2n$-dimensional $\kk$-vector space $V$ equipped with a fixed nondegenerate symplectic form $\br{\ , \ }$, so that for any $g \in G$ and $v, w \in V$ we have $\br{gv,gw} = \br{v,w}$. For example, one may let $V = \kk^{2n}$ and define $\br{\ , \ }$ to be $\br{(x_1, x_2, \ldots, x_{2n}), (y_1, y_2, \ldots, y_{2n})}\colonequals\sum_{i=1}^{2n}x_iy_{2n-i}.$
Let $\fg$ be the Lie algebra of $G$ which is identified with $\{X \in \End(V) \mid \br{Xv,w}=\br{v, Xw} \textup{ for any } v, w \in V\}$.  We naturally identify $W$ in \ref{sec:weyl} with the Weyl group of $G$. Define $\cN \subset \fg$ to be the variety of nilpotent elements in $\fg$, called the nilpotent cone of $\fg$.

Let $\cB$ be the flag variety of $G$ which parametrizes Borel subalgebras of $\fg$. We naturally identify $\cB$ with the variety of complete isotropic flags in $V$, i.e.
$$\cB=\{F_\bullet = [0=F_0\subset F_1\subset \cdots \subset F_{2n-1} \subset F_{2n}=V] \mid \dim F_i = i, F_i^\perp=F_{2n-i} \textup{ for all } i\in [0,2n]\}.$$
For $N \in \cN$, we define $\cB_N$ to be the Springer fiber of $N$, i.e. a closed subvariety of $\cB$ consisting of all the Borel subalgebras containing $N$. Under the identification above, it also corresponds to
$\cB_N=\{F_\bullet \in \cB \mid N(F_i) \subset F_i \textup{ for all } i\in [0,2n]\}.$
Then by \cite{lus81} (which extends the method of \cite{spr76} to arbitrary characteristic) there exists an action of $W$ on $H^i(\cB_N)$ for each $i \in \bZ$, called the Springer representation. (Here we adopt the convention that $H^0(\cB_N)$ yields the trivial representation of $W$.)  If $F(N)=N$ (where $F$ is the geometric Frobenius on $\fg$ with the given $\Fq$-structure), then $F^*$ naturally acts on $H^i(\cB_N)$ and it is known that $F^*$ and the $W$-action on $H^i(\cB_N)$ commute.

\section{Nilpotent $G$-orbits}
\subsection{Parametrization of $G$-orbits in $\cN$} \label{sec:param}
Let us first describe the parametrization of orbits in $\cN$ under the adjoint action of $G$. (ref. \cite{hes79}, \cite[Section 2.6]{xue12}) Set $\Omega$ to be the set of pairs $(\lambda, \chi)$ where $\lambda$ is a partition of $2n$ and $\chi$ is a function from $\und{\lambda}$ to $\bN$ such that the following conditions hold:
\begin{enumerate}
\item if $r\in \und{\lambda}$ is odd, then $m_\lambda(r)$ is even 
\item for $r\in \und{\lambda}$, we have $0\leq \chi(r) \leq r/2$ and $\chi(r) = r/2$ if $m_\lambda(r)$ is odd
\item for $r,r' \in \und{\lambda}$ such that $r'\leq r$, we have $\chi(r')\leq \chi(r)$ and $r' - \chi(r') \leq r-\chi(r)$
\end{enumerate}
For a $G$-orbit $\cO \subset \cN$, we attach $(\lambda, \chi) \in \Omega$ as follows. Choose any $N \in \cO$ and let $\lambda\vdash 2n$ be the Jordan type of $N$. For $r\in \und{\lambda}$ we define $\chi(r) \colonequals \min\{i \in \bN \mid \br{N^{2i+1}v, v}=0 \textup{ for any } v\in \ker N^r \}.$
Then $(\lambda,\chi)$ is independent of the choice of $N \in \cO$ and this gives a bijective correspondence from the set of $G$-orbits in $\cN$ to $\Omega$.

By \cite[3.9]{spa82}, the stabilizer in $G$ of any nilpotent element in $\fg$ under the adjoint action is connected. Therefore, the Lang-Steinberg theorem implies that for any nilpotent orbit $\cO \subset \cN$, its $F$-fixed point set $\cO^F$ is nonempty and a single $G^F$-orbit. (See \cite[Chapter 3]{dm91} for more information.)

\subsection{Critical values} \label{sec:critical} For $\lambda \vdash 2n$, we set $\Omega_\lambda \colonequals \{\chi \mid (\lambda, \chi) \in \Omega\}$. Define a function $\Phi_\lambda : \Omega_\lambda \rightarrow \cP(\bZ_{>0}\times \bZ_{>0})$ where $\Phi_\lambda(\chi) \colonequals \{(r, a)\in\bZ_{>0}\times \bZ_{>0} \mid r\in \und{\lambda}, 1\leq a \leq \chi(r)\}$. It is clear that $\Phi_\lambda$ is one-to-one. There are some properties that $\Phi_\lambda(\chi)$ needs to satisfy. First, we have $\Phi_\lambda(\chi) \subset \{(i,j) \in \bZ_{>0}\times \bZ_{>0} \mid i\geq 2j\}$. Furthermore, for $r, r' \in \und{\lambda}$ such that $r<r'$, if $(r,a) \in \Phi_\lambda(\chi)$ then $(r',b) \in \Phi_\lambda(\chi)$ whenever $1\leq b\leq a$. Likewise, if $(r', b) \in \Phi_\lambda(\chi)$ then $(r,a) \in \Phi_\lambda(\chi)$ whenever $r-a\geq r'-b$. 

\begin{center}
\begin{figure}[!hbtp]
\begin{tikzpicture}
	\foreach \x in {1,...,4}
		\draw [fill] (10,\x) circle [dt];
	\foreach \x in {1,...,3}
		\draw [fill] (8,\x) circle [dt];
	\foreach \x in {1,...,3}
		\draw [fill] (6,\x) circle [dt];
	\foreach \x in {1,...,2}
		\draw [fill] (5,\x) circle [dt];
	\foreach \x in {1,...,2}
		\draw [fill] (4,\x) circle [dt];
	\foreach \x in {1,...,1}
		\draw [fill] (2,\x) circle [dt];
	\draw [help lines] (1-0.2,1-0.2) grid (10+0.2,5+0.2);
	\draw (1.6, 0.8) -- (10.2, 5.1);
	\foreach \x in {1,...,10}
		\node at (\x,1-.4) {\x};
	\foreach \y in {1,...,5}
		\node at (1-.4,\y) {\y};
	\path [draw,dashed,thick] (2-0.2,1-0.2)--(2,1) -- (3,1) -- (4,2) -- (5,2) -- (6,3) --(9,3) -- (10, 4) -- (10.2,4);
\end{tikzpicture}
\caption{An example of $\Phi_\lambda(\chi)$}
\label{fig:ex1}
\end{figure}
\end{center}
\begin{example}\label{ex:1} If $\lambda=(10, 10, 8,8, 6,5,5,4,4,2,2,1,1)$ and $\chi(10)=4, \chi(8)= \chi(6)=3, \chi(5)= \chi(4)=2, \chi(2)=1,\chi(1)=0$, then $\Phi_\lambda(\chi)$ is the set of points in Figure \ref{fig:ex1}. Here the aforementioned conditions imply that (1) the points in $\Phi_\lambda(\chi)$ should be either on or below the solid line, and (2) any point on or below the dashed line, with the $x$-coordinate in $\und{\lambda}$, should be contained in $\Phi_\lambda(\chi)$. Thus, in our example the set $\Phi_\lambda(\chi)$ is characterized by four points $(2,1), (4,2), (6,3), (10, 4) \in \Phi_\lambda(\chi)$. 
\end{example}

Motivated from this example, let us define a notion of critical values of $(\lambda, \chi) \in \Omega$.
\begin{defn} We say that $(\lambda, \chi)\in \Omega$ is critical at $r \in \und{\lambda}$, or $r$ is a critical value of $(\lambda, \chi)$, if
\begin{enumerate}
\item $\chi(r) \neq 0$,
\item for $r' \in \und{\lambda}$, if $r'<r$ then $\chi(r')<\chi(r)$, and 
\item for $r' \in \und{\lambda}$, if $r'>r$ then $r'-\chi(r')<r-\chi(r)$.
\end{enumerate}
We set $\chicrit^{(\lambda,\chi)}\colonequals \{(r, \chi(r)) \in \bZ_{>0}\times \bZ_{>0} \mid (\lambda, \chi) \textup{ is critical at } r\in \und{\lambda}\}$.
\end{defn}

For a partition $\lambda$ and $X \subset \bZ_{>0}\times \bZ_{>0}$, we define $\Psi_\lambda(X)$ to be a function from $\und{\lambda}$ to $\bN$ such that for $r\in \und{\lambda}$ we have
$$\Psi_\lambda(X)(r) \colonequals \max(\{r-(a-b) \mid (a,b) \in X, a\geq r\} \cup \{b \mid (a,b) \in X, a<r\} \cup \{0\}).$$
Then it is easy to verify that $\Psi_\lambda(\Phi_\lambda(\chi))=\Psi_\lambda(\chicrit^{(\lambda, \chi)})=\chi$ for any $\chi \in \Omega_\lambda$. Therefore, one may regard $\chicrit^{(\lambda, \chi)}$ as the ``minimum information'' to recover $\chi$.

\subsection{A standard model} \label{sec:standard} Here we describe a standard choice of a geometric Frobenius $F$ and a nilpotent element in a nilpotent $G$-orbit parametrized by $(\lambda, \chi) \in \Omega$. Let us fix a basis $\{\vv^{t}_{r,s} \in V\mid r \in \und{\lambda}, s\in[1, m_\lambda(r)], t \in [1,r]\}$ of $V$ and define $\br{\ , \ }$ to be
\begin{enumerate}[label=$\bullet$]
\item if $m_\lambda(r)$ is even, then $\br{\vv^{t}_{r,2k-1},\vv^{r+1-t}_{r,2k}}=1$ for $k \in [1,\frac{m_\lambda(r)}{2}]$ and $t\in [1,r]$
\item if $m_\lambda(r)$ is odd, then $\br{\vv^t_{r,1},\vv^{r+1-t}_{r,1}}=1$ and $\br{\vv^{t}_{r,2k},\vv^{r+1-t}_{r,2k+1}}=1$ for $k \in [1,\frac{m_\lambda(r)-1}{2}]$ and $t\in [1,r]$
\item $\br{\vv^{t}_{r,s}, \vv^{t'}_{r',s'}}=0$ otherwise
\end{enumerate}
and extend it to $V$ by bilinearity and (skew-)symmetry. We set the geometric Frobenius morphism $F: V \rightarrow V$ to be $F(\sum_{r,s,t}a^t_{r,s} \vv^t_{r,s}) = \sum_{r,s,t} (a^t_{r,s})^q \vv^t_{r,s}$. Then the basis $\{\vv^{t}_{r,s}\}_{r,s,t}$ and the bilinear form $\br{\ , \ }$ are defined over $\Fq$.

We define $N \in \End(V)$ to be
\begin{enumerate}[label=$\bullet$]
\item if $r \in \und{\lambda}$ is not critical or $m_\lambda(r)$ is odd, then $N\vv^t_{r,s}=\vv^{t+1}_{r,s}$ for $s\in [1, m_\lambda(r)], t\in [1, r-1]$ and $N\vv^r_{r,s}=0$
\item if $r \in \und{\lambda}$ is critical and $m_\lambda(r)$ is even, then define $N\vv^{t}_{r,s}$ to be the same as above except that $N\vv^{\chi(r)}_{r,1}= \vv^{\chi(r)+1}_{r,1}+\vv^{ r+1-\chi(r)}_{r,2}$
\end{enumerate}
and extend it to $V$ by linearity. Then one can check that $N$ is $F$-stable and contained in the nilpotent $G$-orbit parametrized by $(\lambda, \chi)$.

For $i \in \bN$, we set $\ker_{\geq i}N\colonequals \ker N \cap \im N^{i-1}$ and $\ker_{> i}N\colonequals \ker_{\geq i+1}N$. In particular, we have $\ker_{\geq i}N= \{0\}$ if $i > \lambda_1$. (Here we adopt the convention that $N^0 = Id$.) Then we have a filtration $\ker N = \ker_{\geq 1}N \supset  \ker_{\geq 2}N\supset \cdots$
and for $r \in \und{\lambda}$ we have $\ker_{\geq r}N = \spn{\vv^{r'}_{r',s} \mid r' \in \und{\lambda}, r'\geq r, s\in [1, m_\lambda(r')]}$. In particular, $\dim \ker_{\geq r}N = m_\lambda(\geq r)$.

\section{Total Springer representations and the restriction formula} 
\subsection{Total Springer representation} \label{sec:green} Recall the Springer action of $W$ on $H^i(\cB_N)$ for $N \in \cN$. Suppose that $N$ is $F$-stable and the $G$-orbit containing $N$ is parametrized by $(\lambda, \chi)$. Then the function $W \rightarrow \qlbar: w \mapsto \sum_{i \in \bZ}(-1)^i \tr(wF^*, H^i(\cB_N))$ is a character of $W$ that does not depend on the choice of $N$, which we denote by $\tspq{\lambda, \chi}$.

\begin{rmk} Note that we do \emph{not} define $\tspq{\lambda, \chi}(w)$ to be $\sum_{i \in \bZ} \tr(w, H^{2i}(\cB_N))q^i$. This is because it is not known that $\cB_N$ satisfies a certain purity condition which holds in good characteristic.
\end{rmk}



\subsection{Partial Springer resolution and the restriction formula}
We claim the following proposition which is the key step of our calculation.
\begin{prop} \label{prop:res} Suppose that $N \in \cN$ is an $F$-stable nilpotent element in the $G$-orbit parametrized by $(\lambda, \chi) \in \Omega$. Then we have
$$\Res_{W'}^W \tspq{\lambda, \chi} = \sum_{l \in \bP(\ker N)^F} \tspq{\lambda(l), \chi(l)}.$$
Here, $(\lambda(l), \chi(l))$ is the parameter of the $Sp(l^\perp/l)$-orbit containing $N|_{l^\perp/l}$, where $Sp(l^\perp/l)$ is identified with $Sp_{2n-2}$.
\end{prop}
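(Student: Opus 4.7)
The strategy, mirroring \cite{kim:betti} in good characteristic, is to exploit the natural projection $\pi : \cB_N \to \bP(\ker N)$, $F_\bullet \mapsto F_1$, and to combine a fiberwise Springer identification with the Grothendieck--Lefschetz trace formula. First I would check that $\pi$ indeed lands in $\bP(\ker N)$: for $F_\bullet \in \cB_N$ the operator $N|_{F_1}$ is nilpotent on a one-dimensional space and hence zero, so $F_1 \subset \ker N$. Since $\br{\ ,\ }$ is alternating and $\cha \kk = 2$, every line of $V$ is automatically isotropic, so no separate isotropy check is needed. For each $l \in \bP(\ker N)$, the condition $N \in \fg$ forces $N l^\perp \subset l^\perp$, so $N$ descends to a nilpotent $\bar N := N|_{l^\perp/l} \in \Lie Sp(l^\perp/l)$ whose $Sp(l^\perp/l)$-orbit is parametrized by $(\lambda(l), \chi(l))$; using the isotropy relations $F_i^\perp = F_{2n-i}$, the fiber $\pi^{-1}(l)$ is canonically identified with $\cB_{\bar N}$ via $F_\bullet \mapsto (F_i/l)_{2 \leq i \leq 2n-1}$.

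The technical heart of the argument is the compatibility between the restriction of the $W$-action on $H^*(\cB_N)$ to $W'$ and the fiberwise Springer actions of $W'$ on $H^*(\cB_{\bar N})$. Note that $W'$ is precisely the Weyl group of the Levi $L \cong GL_1 \times Sp_{2n-2}$ of the parabolic $P \subset G$ stabilizing an isotropic line. To establish this compatibility, I would factor the Springer resolution as $\tilde\cN \to \tilde\cN^P \to \cN$, where $\tilde\cN^P = \{(x, l') \in \cN \times \bP(V) \mid x l' \subset l'\}$ and the first map is a relative flag-variety map whose fibers are flag varieties of $L$; applying Lusztig's construction from \cite{lus81} relatively over $\tilde\cN^P$ should yield a $W'$-action on the relative cohomology that restricts fiberwise to the Springer action on each $H^*(\cB_{\bar N})$ and whose total effect agrees with the restriction of the $W$-action on $H^*(\cB_N)$ to $W'$. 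I expect this to be the main obstacle: one must adapt the good-characteristic relative construction of \cite{kim:betti} to characteristic 2 for $\Lie Sp_{2n}$, using the standard model from Section \ref{sec:standard} to keep track of the modified behavior at critical values of $(\lambda,\chi)$.

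Granting this compatibility, for any $w \in W'$ the Grothendieck--Lefschetz trace formula applied to the proper map $\pi$ gives
\[
\sum_i (-1)^i \tr(wF^*, H^i(\cB_N)) = \sum_{l \in \bP(\ker N)^F} \sum_j (-1)^j \tr(wF^*, H^j(\pi^{-1}(l))) = \sum_{l \in \bP(\ker N)^F} \tspq{\lambda(l), \chi(l)}(w),
\]
which is the claimed identity of characters. The fiber-by-fiber identification from the first paragraph converts each inner alternating Frobenius trace into $\tspq{\lambda(l), \chi(l)}(w)$, so once the compatibility step is executed the remaining verification is routine and avoids any use of purity (which is convenient since purity for $\cB_N$ is not available here, cf.\ the remark preceding Section \ref{sec:green}).
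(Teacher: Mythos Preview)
Your overall architecture --- project $\cB_N$ onto $\bP(\ker N)$, identify each fiber with a Springer fiber for $Sp_{2n-2}$, establish the compatibility of the $W'$-action via a partial resolution factorization $\widetilde{\cN} \to \widetilde{\cN}^P \to \cN$, then finish with Grothendieck--Lefschetz --- is exactly the route the paper takes, following \cite{kim:betti} and the Borho--MacPherson formalism \cite{bm83}.

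However, you have misdiagnosed the obstacle. The compatibility of the $W$- and $W'$-actions is a sheaf-theoretic statement about the pushforward of $\qlbar$ along the partial resolution; it has nothing to do with the particular orbit parameter $(\lambda,\chi)$, and in particular the standard model of \ref{sec:standard} and the critical values play no role whatsoever in this step. The actual difficulty is that the perverse-sheaf construction of the Springer action (on which Borho--MacPherson's argument, and hence \cite{kim:betti}, relies) requires the existence of regular semisimple elements in $\fg$. For $\fg = \Lie Sp_{2n}$ in characteristic $2$ this fails (see e.g.\ \cite[13.3]{jan04}): there are no regular semisimple elements. Your proposal does not address this, and ``using the standard model to track critical values'' would not repair it.

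The paper's fix is to observe that the statement depends only on the isogeny class of $\fg$, and to replace $\fg$ by the simple \emph{adjoint} Lie algebra of type $C$ over $\kk$, where regular semisimple elements do exist. After that substitution the good-characteristic argument of \cite[Proposition 6.1]{kim:betti} goes through verbatim. So your plan is correct in outline, but the one genuine adaptation needed for characteristic $2$ is this isogeny replacement, not anything involving the orbit combinatorics.
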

\begin{proof}
It is essentially proved in the same way as \cite[Proposition 6.1]{kim:betti}. The proof therein is based on the result of Borho-MacPherson \cite{bm83}; there are two things to consider in order to apply their results to our setting. Firstly, their results are stated when the base field is $\bC$, i.e. in characteristic 0. However, their argument is still applicable, mutatis mutandis, to positive characteristic. (There is of no difference even when the characteristic is bad for $G$.) Secondly, their results rely on the interpretation of Springer theory in terms of  perverse sheaves, where the existence of regular semisimple elements in $\fg$ is essential. This is no longer true for $\fg=\Lie Sp_{2n}$ in characteristic 2; see e.g. \cite[13.3]{jan04}. However, since the statement only depends on the isogeny class of $\fg$, we may replace $\fg$ with the simple adjoint Lie algebra of type $C$ over $\kk$, where the existence of regular semisimple elements is guaranteed. After this, one may simply follow the proof of \cite[Proposition 6.1]{kim:betti}.
(See also \cite[Section 3]{xue12:comb} for similar argument.)
\end{proof}

\section{Calculation}\label{sec:calculation}
Let $N \in \cN$ be a nilpotent element contained in a $G$-orbit parametrized by $(\lambda, \chi) \in \Omega$. The goal of this section is to calculate the RHS of the formula in Proposition \ref{prop:res}, i.e. for each $r \in \und{\lambda}$ we calculate $\sum_{l \in (\bP(\ker_{\geq r} N)-\bP(\ker_{> r} N))^F} \tspq{\lambda(l), \chi(l)}$ using geometric argument. For simplicity we assume that $N$ and $F$ are defined as in \ref{sec:standard} and let $l =\spn{\ww}$ where $\ww=\sum_{r'\geq r, s\in[1, m_{\lambda}(r')]} a_{r',s}\vv^{r'}_{r',s}$. Then the condition $l \subset \ker_{\geq r}N - \ker_{>r}N$ is equivalent to that $a_{r,s}\neq 0$ for some $s\in [1, m_\lambda(r)]$, and $l$ is $F$-stable if and only if there exists $c\in \kk-\{0\}$ such that $ca_{r',s}\in \Fq$ for all $r' \geq r$ and $s \in [1, m_\lambda(r')]$. Now we observe the following lemma.
\begin{lem} \label{lem:lamp} Suppose that $l$ is a line contained in $\bP(\ker_{\geq r} N)-\bP(\ker_{> r} N)$. Then the Jordan type of $N|_{l^\perp/l}$ is $\lambda\pch{r,r}{r-1,r-1}$ if $\br{l, N^{-(r-1)}l}=0$ and $\lambda\pch{r}{r-2}$ otherwise.
\end{lem}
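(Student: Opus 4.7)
The plan is to determine the Jordan type of $M \colonequals N|_{l^\perp/l}$ by computing $\dim\ker M^k$ for all $k\geq 0$ and reading off the transpose partition from consecutive differences $m_\mu(\geq k) = \dim\ker M^k - \dim\ker M^{k-1}$. The entire analysis should rest on two general features of the setup: the $\fg$-adjoint identity $\br{Nv,w}=\br{v,Nw}$ from Section~\ref{sec:setup}, which yields $(\ker N^k)^\perp = \im N^k$ by a dimension count; and the fact that $l\subset\ker N$ is automatically isotropic (since the symplectic form is alternating in characteristic $2$), so that $l\subset l^\perp$ and the quotient is defined.

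Starting from $\dim\ker M^k = \dim(l^\perp\cap N^{-k}(l))-1$, the hypothesis on $l$ gives $l\subset\im N^k$ iff $k\leq r-1$, which naturally splits the computation. For $k\geq r$, one has $N^{-k}(l)=\ker N^k$; since $\ww\notin\im N^k=(\ker N^k)^\perp$, the subspace $l^\perp$ cuts $\ker N^k$ in codimension one, yielding $\dim\ker M^k=\dim\ker N^k-2$. For $k\leq r-1$, $\dim N^{-k}(l)=\dim\ker N^k+1$, and $\ker N^k\subset l^\perp$ because $\ww\in\im N^{r-1}\subset\im N^k$. Hence $l^\perp\cap N^{-k}(l)$ has codimension $\epsilon_k\in\{0,1\}$ in $N^{-k}(l)$, where $\epsilon_k=1$ iff $\br{v,\ww}\neq 0$ for some (equivalently, every) $v$ with $N^k v=\ww$; so $\dim\ker M^k=\dim\ker N^k-\epsilon_k$.

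Next I would show that $\epsilon_k=0$ automatically for $k\leq r-2$: writing $\ww=N^{r-1}u$, the adjoint identity gives $\br{v,\ww}=\br{v,N^{r-1}u}=\br{N^{r-1}v,u}=\br{N^{r-1-k}\ww,u}=0$, because $\ww\in\ker N$ and $r-1-k\geq 1$. Thus $\epsilon_{r-1}$ is the sole relevant quantity, and its vanishing is precisely the condition $\br{l,N^{-(r-1)}l}=0$ in the statement of the lemma.

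Assembling the pieces: in the vanishing case $m_\mu(\geq k)$ agrees with $m_\lambda(\geq k)$ for $k\neq r$ and drops by two at $k=r$, forcing $\mu=\lambda\pch{r,r}{r-1,r-1}$; in the nonvanishing case it drops by one at both $k=r-1$ and $k=r$, giving $\mu=\lambda\pch{r}{r-2}$. The main obstacle is checking the intrinsic well-definedness of the condition $\br{l,N^{-(r-1)}l}$: two preimages of $\ww$ under $N^{r-1}$ differ by an element of $\ker N^{r-1}=(\im N^{r-1})^\perp$, which pairs trivially with $\ww\in\im N^{r-1}$, so the dichotomy is canonical. Notably the argument uses neither the standard model nor the critical-value bookkeeping of Section~\ref{sec:standard}, relying only on $N\in\fg$ and the symplectic adjunction.
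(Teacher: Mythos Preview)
Your proof is correct and is precisely the standard argument that the paper defers to via the references \cite{sho83} and \cite{kim:euler}: one determines the Jordan type of $M=N|_{l^\perp/l}$ by computing $\dim\ker M^k$ from the formula $\dim\ker M^k=\dim(l^\perp\cap N^{-k}(l))-1$, using the adjunction $(\ker N^k)^\perp=\im N^k$ to control the intersections. Your observation that the argument is intrinsic (independent of the standard model and the critical-value data) is exactly why the paper can simply cite the good-characteristic proofs without modification.
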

\begin{proof} It is proved similarly to characteristic 0 case, e.g. see \cite[\S 2]{sho83} or \cite[Section 5]{kim:euler}.
\end{proof}
From now on we denote $\lambda(l)$, $\chi(l)$, $\chicrit^{(\lambda, \chi)}$, $\chicrit^{(\lambda(l), \chi(l))}$, $\Psi_{\lambda}$, $\Psi_{\lambda(l)}$, $m_\lambda$, $m_{\lambda(l)}$ by $\lambda'$, $\chi'$, $\chicrit$, $\chicrit'$, $\Psi$, $\Psi'$, $m$, $m'$, respectively.  We divide all the possibilities into four cases below and argue case-by-case.
\begin{center}
\begin{tabular}{|c|c|c|c|c|}
\hline
&$r$ critical&parity of $r$&parity of $m_\lambda(r)$&$\chi(r) = r/2$\\
\hline
(\ref{sec:case1})&no&$-$&even&no\\
\hline
(\ref{sec:case2})&yes&$-$&even&no\\
\hline
(\ref{sec:case3})&yes&even&odd&yes\\
\hline
(\ref{sec:case4})&yes&even&even&yes\\
\hline
\end{tabular}
\end{center}


\subsection{$r$ is not critical for $(\lambda, \chi)$} \label{sec:case1} (It corresponds to $r\in \{1, 5, 8\}$ case in Example \ref{ex:1}.) We necessarily have $\chi(r)<r/2$ and also $m(r)$ is even. In this case, we always have $\lambda'=\lambda\pch{r,r}{r-1,r-1}$ by Lemma \ref{lem:lamp} since $\br{l, N^{-(r-1)}l}=0$. On the other hand, we always have $\chicrit' \supset \chicrit$; the critical values for $(\lambda,\chi)$ remain critical for $(\lambda', \chi')$. More precisely, there are two cases to consider.

\begin{enumerate}[label={\bfseries Case \arabic*.}, leftmargin=\parindent, itemindent=\labelwidth]
\item First suppose that either $\chi(r)=0$ or there exists $\tilde{r}\in \und{\lambda}$ such that $\tilde{r}<r$ and $\chi(\tilde{r})=\chi(r)$. Then direct calculation shows that we always have $\chicrit'= \chicrit$. As $\#(\bP(\ker_{\geq r}N)-\bP(\ker_{>r} N))^F =  \frac{\qe{m(\geq r)}-\qe{m(>r)}}{q-1}$, in this case we have
$$\sum_{l \in (\bP(\ker_{\geq r} N)-\bP(\ker_{> r} N))^F} \tspq{\lambda', \chi'}= \frac{\qe{m(\geq r)}-\qe{m(>r)}}{q-1} \tspq{\lambda\pch{r,r}{r-1,r-1}, \Psi'\left(\chicrit\right)}.$$
\item Suppose otherwise. Since $r$ is not critical, it means that there exists $\tilde{r} \in \und{\lambda}$ such that $\tilde{r}>r$ and $\tilde{r}-\chi(\tilde{r}) = r-\chi(r)$. We have two possibilities:
\begin{enumerate}
\item $\chicrit'= \chicrit \cup \{(r-1, \chi(r))\}$ if there exists $v\in N^{-(r-1)}l$ such that $\br{N^{2\chi(r)-1}v,v}\neq 0$
\item $\chicrit'= \chicrit$ otherwise
\end{enumerate}
By direct calculation, one can show that the first situation happens if and only if $a_{\tilde{r},1} \neq 0$. (Recall that $l= \spn{\sum_{r'\geq r, s\in [1, m(r')]} a_{r',s}\vv^{r'}_{r',s}}$.) If we set $H\subset \bP(\ker_{\geq r}N)$ to be the hyperplane defined by the equation $a_{\tilde{r},1}=0$, then it is defined over $\Fq$ and it intersects $\bP(\ker_{>r}N)\subset \bP(\ker_{\geq r}N)$ transversally. As $\#(\bP(\ker_{\geq r}N)-\bP(\ker_{>r}N)\cup H)^F =\qe{m(\geq r)-1}-\qe{m(>r)-1}$ and $\#(H-\bP(\ker_{>r}N))^F=\frac{\qe{m(\geq r)-1}-\qe{m(>r)-1}}{q-1}$, we have
\begin{align*}
&\sum_{l \in (\bP(\ker_{\geq r} N)-\bP(\ker_{> r} N))^F} \tspq{\lambda', \chi'}
\\&=\frac{\qe{m(\geq r)-1}-\qe{m(>r)-1}}{q-1} \tspq{\lambda\pch{r,r}{r-1,r-1}, \Psi'\left(\chicrit\right)}
\\&\quad +(\qe{m(\geq r)-1}-\qe{m(>r)-1}) \tspq{\lambda\pch{r,r}{r-1,r-1},   \Psi'\left( \chicrit \cup \{(r-1, \chi(r))\}\right)}.
\end{align*}
\end{enumerate}
Note that if we are in Case 1 then $\Psi'\left(\chicrit\right)=\Psi'( \chicrit \cup \{(r-1, \chi(r))\})$ even if $r-1$ is not critical for $(\lambda', \chi')$, in which case the RHS of the formula in Case 2 coincides with that in Case 1. Thus in either case we may use the formula in Case 2 to calculate $\sum_{l \in (\bP(\ker_{\geq r} N)-\bP(\ker_{> r} N))^F} \tspq{\lambda', \chi'}$.
\begin{rmk} Scheme-theoretically, the hyperplane $H$ in Case 2 should be defined by the equation $(a_{\tilde{r},1})^2=0$, in which case $H$ is not reduced. However, this does not cause any problem as we only deal with $\ell$-adic cohomology. (Similar phenomena occur in other cases as well.)
\end{rmk}


\subsection{$r$ is critical and $\chi(r) \neq r/2$} \label{sec:case2} (It corresponds to $r=10$ case in Example \ref{ex:1}.) The condition forces that $m(r)$ is even and $\chi(r-1) =\chi(r)-1$ if $r-1 \in \und{\lambda}$. In this case we always have $\lambda'=\lambda\pch{r,r}{r-1,r-1}$ and $\chi'(r-1) \in \{\chi(r),\chi(r-1)\}$ similarly to the first case. Namely, let us set $H\subset \bP(\ker_{\geq r} N)$ to be the hyperplane defined by $a_{r,1}=0$ which contains $\bP(\ker_{>r}N)$. If $l\not\in H$ then there exists $v\in N^{-(r-1)}l\cap l^\perp$ such that $\br{N^{2\chi(r)-1}v,v}\neq 0$, from which we have $\chi'(r-1) = \chi(r)$. (One may choose $v\in N^{-(r-1)}l\cap l^\perp$ to be $\sum_{r'\geq r, s\in [1, m(r')]} a_{r',s}\vv^{r'-(r-1)}_{r',s}$, in which case $\br{N^{2\chi(r)-1}v,v}= a_{r,1}^2\neq 0$.) As  $\#(\bP(\ker_{\geq r}N)-H)^F=\qe{m(\geq r)-1}$, we have 
$$\sum_{l \in (\bP(\ker_{\geq r}N)-H)^F} \tspq{\lambda',\chi'} = \qe{m(\geq r)-1}\tspq{\lambda\pch{r,r}{r-1,r-1}, \Psi'(\chicrit \cup \{(r-1, \chi(r))\})}.$$

%

Now suppose that $l\in H-\bP(\ker_{>r}N)$, in which case $\chi'(r-1) = \chi(r) -1$. First assume that $m(r)>2$ so that $\lambda' \owns r$. Then $\chi'(r) = \chi(r)$ if and only if there exists $v\in l^\perp \cap \ker N^r$ such that $\br{N^{2\chi(r)-1}v,v}\neq 0$. Let us set $H' \subset H$ to be a linear subvariety defined by ($a_{r,1}=0$ and) $a_{r,3}=\cdots = a_{r, m(r)}=0$ which contains $\bP(\ker_{>r}N)$. If $l \in H-H'$ then we can always find $v=\vv^1_{r,1}+\sum_{s=3}^{m(r)}b_{r,s}\vv^1_{r,s}$ for some $\{b_{r,s}\}$, not all zero, such that $\br{v, l}=0$, $N^rv=0$, and $\br{N^{2\chi(r)-1}v,v}\neq 0$. (The existence of such  $\{b_{r,s}\}$ follows from the fact that the pairing $\br{\ , \ } : \spn{\vv_{r,s}^1 \mid s \in [3,m(r)]} \times \spn{\vv_{r,s}^r \mid s \in [3,m(r)]} \rightarrow \kk$ is perfect.) Thus we have $\chi'(r)=\chi(r)$ and as a result $\chicrit'=\chicrit$. As $(H-H')^F=\frac{\qe{m({\geq r})-1} -\qe{m({>r})+1}}{q-1}$, it follows that 
$$\sum_{l \in (H-H')^F} \tspq{\lambda',\chi'} =  \frac{\qe{m({\geq r})-1} -\qe{m({>r})+1}}{q-1}\tspq{\lambda\pch{r,r}{r-1,r-1}, \Psi'(\chicrit)}.$$
Meanwhile if $m(r)=2$ then $r \not\in \und{\lambda'}$ and also the coefficient of the RHS in the formula above is zero. In this case we simply ignore this term.

We suppose that $l \in H'$. Then necessarily $a_{r,2}\neq0$, which forces that $N^{-(r-1)}\vv_{r,1} \cap l^\perp=\emptyset$. From this one can easily deduce that $\chi'(r) = \chi(r)-1$. Also, we may assume that $a_{r,2}=1$, i.e. $\ww-\vv_{r,2} \in \ker_{>r}N$. Note that the map $l \mapsto \ww-\vv_{r,2}$ defines an isomorphism of varieties $H' - \bP(\ker_{>r}N)\simeq \ker_{>r}N$. Then there exists a unique $j \in \und{\lambda}$ such that $j>r$ and $\ww-\vv_{r,2} \in \ker_{\geq j}N - \ker_{>j}N$. In this case, direct calculation shows that there exists $v \in \ker N^j \cap l^\perp$ such that $\br{N^{2\chi(r)-1}v,v}\neq 0$, which implies that $\chi'(j) \geq  \chi(r)$. Here, it is crucially used that the pairing $\br{\ , \ }: \spn{\vv^{j}_{j,s} \mid s\in [1, m(j)]} \times \spn{\vv^{1}_{j,s} \mid s\in [1, m(j)]} \rightarrow \kk$ is perfect, which in particular implies that for any $w \in \ker_{\geq j}N - \ker_{>j}N$ there exists $v \in \ker N^j-\ker N^{j-1}$ such that $\br{v,w} \neq 0$.

From this observation we obtain the following result. Let $\chicrit^*=\chicrit\cup  \{(r-1, \chi(r)-1)\}-\{(r,\chi(r)\}$ and let $j\in \und{\lambda}$ be the largest value such that $\chi(j) = \chi(r)$ and $j-\chi(r) \neq r' -\chi(r')$ for any $r'\in \und{\lambda}$ such that $r'>j$. (The last condition is imposed so that $\Psi'(\chicrit^* \cup \{(j, \chi(r))\}) \neq \Psi'(\chicrit^*)$.)  Set $\und{\lambda} \cap [r,j] = \{j_1, j_2, \ldots, j_{a+1}\}$ where $j=j_1>j_2>\cdots>j_{a+1}=r$. Then $a\geq 0$ since $r$ is critical, and $\chi(j_1) = \cdots = \chi(j_{a+1})=\chi(r)$. Also, we have
\begin{enumerate}
\item if $\ww-\vv_{r,2}  \in \ker_{>j}N$, then $\chicrit' = \chicrit^*$
\item if $\ww-\vv_{r,2} \in \ker_{> j_{b+1}} N- \ker_{> j_b}N$ for some $1\leq b \leq a$, then $\chicrit'=\chicrit^* \cup \{(j_b, \chi(r))\}$
\end{enumerate}
Therefore, it follows that
\begin{align*}
&\sum_{l \in (H' - \bP(\ker_{>r}N))^F} \tspq{\lambda',\chi'} =  \qe{m(>j)}\tspq{\lambda\pch{r,r}{r-1,r-1}, \Psi'(\chicrit^*)}
\\&\qquad \qquad +\sum_{b=1}^{a} (\qe{m(\geq j_b)}-\qe{m(>j_b)})\tspq{\lambda\pch{r,r}{r-1,r-1}, \Psi'(\chicrit^* \cup \{(j_b, \chi(r))\})}.
\end{align*}


\begin{rmk} As this case shows, it is not possible to use a method similar to \cite[Section 7, 8]{kim:betti}, i.e. dividing $V$ into orthogonal pieces each of which corresponds to a ``rectangle'' in the Jordan type of $N$ and adding up the outcomes. One possible explanation of this phenomenon is that one cannot directly use \cite[Lemma 8.1]{kim:betti} whose proof uses division by 2.
\end{rmk}


\subsection{$r$ is critical, $\chi(r) = r/2$, and $m(r)$ is odd} \label{sec:case3}
(It corresponds to $r=6$ case in Example \ref{ex:1}.)
Clearly $r$ is even. Let us set $H\subset\bP(\ker_{\geq r}N)$ to be the hyperplane defined by $a_{r,1}=0$ which contains $\bP(\ker_{>r}N)$. If $m(r)>1$ and $l \in H - \bP(\ker_{>r}N)$ then $\lambda' = \lambda\pch{r,r}{r-1,r-1}$ and direct calculation shows that $\chicrit'=\chicrit$. As $\#(H-\bP(\ker_{>r} N))^F = \frac{\qe{m(\geq r)-1}-\qe{m(>r)}}{q-1}$, we have
$$\sum_{l \in (H - \bP(\ker_{>r}N))^F} \tspq{\lambda',\chi'} =  \frac{\qe{m(\geq r)-1}-\qe{m(>r)}}{q-1}\tspq{\lambda\pch{r,r}{r-1,r-1}, \Psi'(\chicrit)}.$$
If $m(r)=1$ then the coefficient of the RHS above is zero, which is consistent with that $H= \bP(\ker_{>r}N)$, i.e. $H -\bP(\ker_{>r}N)=\emptyset$. In such a case we simply ignore this term.

This time suppose $l \not\in H$. Then there exists $v \in N^{-(r-1)}(l)$ such that $\br{v, N^{r-1}v}\neq 0$, which means that $\lambda' = \lambda\pch{r}{r-2}$. (One may take $v=\sum_{r'\geq r, s\in [1, m(r')]} a_{r',s}\vv^{r'-(r-1)}_{r',s}$.) Furthermore, there exists $v\in N^{-(r-2)}l\cap l^\perp$ such that $\br{N^{r-3}v, v} \neq 0$, which implies that $\chi'(r-2) = \chi(r)-1 = (r-2)/2$. (Similarly to above one may take $v=\sum_{r'\geq r, s\in [1, m(r')]} a_{r',s}\vv^{r'-(r-2)}_{r',s}$.) Then $r-2$ is always a critical value of $(\lambda', \chi')$, i.e. $(r-2, (r-2)/2) \in \chicrit'$.


After this, we argue similarly to the second case. We set $H' \subset \bP(\ker_{\geq r}N)$ to be a linear subvariety defined by $a_{r,2} = a_{r,3}=\cdots=a_{r,m(r)}=0$. Then $H' \cap H = \bP(\ker_{>r}N)$, and if $l \not \in H' \cup H$ then one can show that $\chi'(r)=\chi(r)=r/2$, which in turn implies that $\chi'= \Psi'(\chicrit \cup \{(r-2, (r-2)/2)\})$. As $\#(\bP(\ker_{\geq r}N) - H'-H)^F = \qe{m(\geq r)-1}-\qe{m(>r)}$, it follows that 
\begin{align*}
&\sum_{l \in (\bP(\ker_{\geq r}N) - H'-H)^F} \tspq{\lambda',\chi'} 
\\&=  (\qe{m(\geq r)-1}-\qe{m(>r)})\tspq{\lambda\pch{r}{r-2},\Psi'(\chicrit \cup \{(r-2, \frac{r-2}{2})\})}.
\end{align*}

Now we suppose that $l \in H'$. Then $a_{r,1}\neq0$, which forces that $N^{-(r-1)}\vv_{r,1} \cap l^\perp=\emptyset$. From this one can easily deduce that $\chi'(r) = \chi(r)-1$. Also, we may assume that $a_{r,1}=1$, i.e. $\ww-\vv_{r,2} \in \ker_{>r}N$. Note that the map $l \mapsto \ww-\vv_{r,2}$ defines an isomorphism of varieties $H' - \bP(\ker_{>r}N)\simeq \ker_{>r}N$.  Then there exists a unique $j \in \und{\lambda}$ such that $j>r$ and $w \in \ker_{\geq j}N - \ker_{>j}N$. In this case, direct calculation shows that there exists $v \in \ker N^j \cap l^\perp$ such that $\br{N^{2\chi(r)-1}v,v}\neq 0$, which implies that $\chi'(j)\geq \chi(r)$. Here, it is crucially used that the pairing $\br{\ , \ }: \spn{\vv^{j}_{j,s} \mid s\in [1, m(j)]} \times \spn{\vv^{1}_{j,s} \mid s\in [1, m(j)]} \rightarrow \kk$ is perfect, which in particular implies that for any $w \in \ker_{\geq j}N - \ker_{>j}N$ there exists $v \in \ker N^j-\ker N^{j-1}$ such that $\br{v,w} \neq 0$.

From this observation we obtain the following result. Set $\chicrit^{**}=\chicrit\cup  \{(r-2, (r-2)/2)\}-\{(r,r/2)\}$ and let $j\in \und{\lambda}$ be the largest value such that $\chi(j) = \chi(r)=r/2$ and $j-\chi(r) \neq r' -\chi(r')$ for any $r'\in \und{\lambda}$ such that $r'>j$. (The last condition is imposed so that $\Psi'(\chicrit^{**} \cup \{(j_1, \chi(r))\}) \neq \Psi'(\chicrit^{**})$.)  Set $\und{\lambda} \cap [r,j] = \{j_1, j_2, \ldots, j_{a+1}\}$ where $j=j_1>j_2>\cdots>j_{a+1}=r$. Then $a\geq 0$ since $r$ is critical, and $\chi(j_1) = \cdots = \chi(j_{a+1})=\chi(r)=r/2$. Also, we have
\begin{enumerate}
\item if $w \in \ker_{>j}N$, then $\chicrit' = \chicrit^{**}$
\item if $w \in \ker_{> j_{b+1}} N- \ker_{> j_b}N$ for some $1\leq s \leq a$, then $\chicrit'=\chicrit^{**} \cup \{(j_b, r/2)\}$
\end{enumerate}
Therefore, it follows that
\begin{align*}
&\sum_{l \in (H' - \bP(\ker_{>r}N))^F} \tspq{\lambda',\chi'} =  \qe{m(>j)}\tspq{\lambda\pch{r}{r-2}, \Psi'(\chicrit^{**})}
\\&\qquad \qquad +\sum_{b=1}^{a} (\qe{m(\geq j_b)}-\qe{m(>j_b)})\tspq{\lambda\pch{r}{r-2}, \Psi'(\chicrit^{**} \cup \{(j_b, r/2)\})}.
\end{align*}


%

\subsection{$r$ is critical, $\chi(r)=r/2$, and $m(r)$ is even}\label{sec:case4} 
(It corresponds to $r\in \{2, 4\}$ case in Example \ref{ex:1}.)
In this case $r$ is still even. Let us set $H \subset \bP(\ker_{\geq r}N)$ to be the hyperplane defined by the equation $a_{r,1}=0$ which contains $\bP(\ker_{>r}N)$. If $l \not\in H$ then there exists $v \in N^{-(r-1)}(l)$ such that $\br{v, N^{r-1}v} \neq 0$, thus $\lambda' = \lambda\pch{r}{r-2}$. (One may take $v=\sum_{r'\geq r, s\in[1, m(r')]} a_{r',s}\vv^{r'-(r-1)}_{r',s}$.)  Furthermore, there exists $v \in N^{-(r-2)}l \cap l^\perp$ such that $\br{v, N^{r-3}v}\neq 0$, which means that $\chi'(r-2) = \chi(r) - 1 = (r-2)/2$. (Similarly one may take $v=\sum_{r'\geq r, s\in[1,m(r')]} a_{r',s}\vv^{r'-(r-2)}_{r',s}$.)
As $\#(\bP(\ker_{\geq r}N)-H)^F = \qe{m(\geq r)-1}$, we have
$$\sum_{l \in (\bP(\ker_{\geq r}N)-H)^F} \tspq{\lambda',\chi'} = \qe{m(\geq r)-1}\tspq{\lambda\pch{r}{r-2}, \Psi'(\chicrit \cup \{(r-2, \frac{r-2}{2})\})}.$$

After this, the argument here is similar to the second and the third cases above. Suppose that $l \in H$. Then we have $\lambda'= \lambda\pch{r,r}{r-1,r-1}$. Furthermore, there exists $v\in N^{-(r-1)}l \cap l^{\perp}$ such that $\br{N^{r-3}v,v} \neq0$, which means that $\chi'(r-1) = \chi(r)-1 = (r-2)/2$.  (One may take $v=\sum_{r'\geq r, s\in[1, m(r')]} a_{r',s}\vv^{r'-(r-1)}_{r',s}$.) 

Set $H' \subset H$ to be a linear subvariety defined by ($a_{r,1}=0$ and) $a_{r,3}=a_{r,4}=\cdots=a_{r,m(r)}=0$ which contains $\bP(\ker_{>r}N)$. If $m(r)>2$ and $l\in H-H'$, then one can show that $\chi'(r) = \chi(r) = r/2$, which follows that $\chi'=\Psi'(\chicrit)$. As $\#(H-H')^F = \frac{\qe{m(\geq r)-1}-\qe{m(>r)+1}}{q-1}$, we have
$$\sum_{l \in (H-H')^F} \tspq{\lambda',\chi'} = \frac{\qe{m(\geq r)-1}-\qe{m(>r)+1}}{q-1}\tspq{\lambda\pch{r,r}{r-1,r-1}, \Psi'(\chicrit)}.$$
When $m(r)=2$, then the coefficient of the RHS is zero, which is consistent with the fact that $H=H'$, i.e. $H-H'=\emptyset$. In this case we simply ignore this term.




Now we suppose that $l \in H'$. Then $a_{r,2}\neq0$, which forces that $N^{-(r-1)}\vv_{r,1} \cap l^\perp=\emptyset$. From this one can easily deduce that $\chi'(r) = \chi(r)-1$. Also, we may assume that $a_{r,2}=1$, i.e. $\ww-\vv_{r,2}^r \in \ker_{>r}N$. Note that the map $l \mapsto \ww-\vv_{r,2}^r$ defines an isomorphism of varieties $H' - \bP(\ker_{>r}N) \simeq \ker_{>r}N$. Then there exists a unique $j \in \und{\lambda}$ such that $j>r$ and $\ww-\vv_{r,2}^r \in \ker_{\geq j}N - \ker_{>j}N$. In this case, direct calculation shows that there exists $v \in \ker N^j \cap l^\perp$ such that $\br{N^{2\chi(r)-1}v,v}\neq 0$, which implies that $\chi'(j)\geq \chi(r)$. Here, it is crucially used that the pairing $\br{\ , \ }: \spn{\vv^{j}_{j,s} \mid s\in [1, m(j)]} \times \spn{\vv^{1}_{j,s} \mid s\in [1, m(j)]} \rightarrow \kk$ is perfect, which in particular implies that for any $w \in \ker_{\geq j}N - \ker_{>j}N$ there exists $v \in \ker N^j-\ker N^{j-1}$ such that $\br{v,w} \neq 0$.

From this observation we obtain the following result. Set $\chicrit^{***}=\chicrit\cup  \{(r-1, (r-2)/2)\}-\{(r,r/2)\}$ and let $j\in \und{\lambda}$ be the largest value such that $\chi(j) = \chi(r)=r/2$ and $j-\chi(r) \neq r' -\chi(r')$ for any $r'\in \und{\lambda}$ such that $r'>j$. (The last condition is imposed so that $\Psi'(\chicrit^{***} \cup \{(j_1, \chi(r))\}) \neq \Psi'(\chicrit^{***})$.)  Set $\und{\lambda} \cap [r,j] = \{j_1, j_2, \ldots, j_{a+1}\}$ where $j=j_1>j_2>\cdots>j_{a+1}=r$. Then $a\geq 0$ since $r$ is critical, and $\chi(j_1) = \cdots = \chi(j_{a+1})=\chi(r)=r/2$. Also, we have
\begin{enumerate}
\item if $w \in \ker_{>j}N$, then $\chicrit' = \chicrit^{***}$
\item if $w \in \ker_{> j_{b+1}} N- \ker_{> j_b}N$ for some $1\leq b \leq a$, then $\chicrit'=\chicrit^{***} \cup \{(j_b, r/2)\}$
\end{enumerate}
Therefore, it follows that
\begin{align*}
&\sum_{l \in (H' - \bP(\ker_{>r}N))^F} \tspq{\lambda',\chi'} =  \qe{m(>j)}\tspq{\lambda\pch{r,r}{r-1,r-1}, \Psi'(\chicrit^*)}
\\&\qquad \qquad +\sum_{b=1}^{a} (\qe{m(\geq j_b)}-\qe{m(>j_b)})\tspq{\lambda\pch{r,r}{r-1,r-1}, \Psi'(\chicrit^* \cup \{(j_b, r/2)\})}.
\end{align*}

\section{Main theorem}
We summarize the results in the previous section and conclude our first main theorem. First, we recall some notations; see \ref{sec:weyl} for $W$ and $W'$; see \ref{sec:partition} for $\und{\lambda}$, $\lambda\pch{a_1, a_2, \ldots}{b_1, b_2, \ldots}$,  $m(r)$, $m_{\geq r}$, etc.; see \ref{sec:misc} for $\qe{-}$; see \ref{sec:param} for $\Omega$; see \ref{sec:critical} for critical values, $\Psi'$, and $\chicrit$ (here, $\Psi'$ is either $\Psi_{\lambda\pch{r,r}{r-1,r-1}}$ or $\Psi_{\lambda\pch{r}{r-2}}$ depending on each term); see \ref{sec:green} for $\tspq{\lambda, \chi}$.
\begin{thm}[Main theorem for the symplectic Lie algebra in characteristic 2] \label{thm:main} For $(\lambda, \chi)\in \Omega$, the character $\Res^W_{W'} \tspq{\lambda, \chi}$ is equal to
\begin{align*}
\sum_{\substack{r\in\und{\lambda} \\\textup{ not critical}}}&
\left[
\begin{aligned}
&\frac{\qe{m_{\geq r}-1}-\qe{m_{>r}-1}}{q-1} \tspq{\lambda\pch{r,r}{r-1,r-1}, \Psi'\left(\chicrit\right)}
\\&+\left(\qe{m_{\geq r}-1}-\qe{m_{>r}-1}\right) \tspq{\lambda\pch{r,r}{r-1,r-1},   \Psi'\left( \chicrit \cup \{(r-1, \chi(r))\}\right)}
\end{aligned}\right]\allowdisplaybreaks
\\+\sum_{\substack{r\in\und{\lambda} \textup{ critical,} \\ \chi(r) \neq r/2}}&
\left[
\begin{aligned}
&\qe{m_{\geq r}-1}\tspq{\lambda\pch{r,r}{r-1,r-1}, \Psi'(\chicrit \cup \{(r-1, \chi(r))\})}
\\&+\frac{\qe{m_{\geq r}-1} -\qe{m_{>r}+1}}{q-1}\tspq{\lambda\pch{r,r}{r-1,r-1}, \Psi'(\chicrit)}
\\&+ \qe{m_{>j}}\tspq{\lambda\pch{r,r}{r-1,r-1}, \Psi'(\chicrit^*)}
\\&+\sum_{k\in \und{\lambda}, r<k\leq j} \left(\qe{m_{\geq k}}-\qe{m_{>k}}\right)\tspq{\lambda\pch{r,r}{r-1,r-1}, \Psi'(\chicrit^* \cup \{(k, \chi(r))\})}
\end{aligned}\right]\allowdisplaybreaks
\\+\sum_{\substack{r\in\und{\lambda} \textup{ critical,} \\ \chi(r) = r/2, \\ m(r) \textup{ odd}}}&
\left[
\begin{aligned}
&\frac{\qe{m_{\geq r}-1}-\qe{m_{>r}}}{q-1}\tspq{\lambda\pch{r,r}{r-1,r-1}, \Psi'(\chicrit)}
\\&+\left(\qe{m_{\geq r}-1}-\qe{m_{>r}}\right)\tspq{\lambda\pch{r}{r-2},\Psi'(\chicrit \cup \{(r-2, (r-2)/2)\})}
\\&+\qe{m_{>j}}\tspq{\lambda\pch{r}{r-2}, \Psi'(\chicrit^{**})}
\\&+\sum_{k\in \und{\lambda}, r<k\leq j}  \left(\qe{m_{\geq k}}-\qe{m_{>k}}\right)\tspq{\lambda\pch{r}{r-2}, \Psi'(\chicrit^{**} \cup \{(k, r/2)\})}
\end{aligned}\right]\allowdisplaybreaks
\\+\sum_{\substack{r\in\und{\lambda} \textup{ critical,} \\ \chi(r) = r/2, \\ m(r) \textup{ even}}}&
\left[
\begin{aligned}
& \qe{m_{\geq r}-1}\tspq{\lambda\pch{r}{r-2}, \Psi'(\chicrit \cup \{(r-2, \frac{r-2}{2})\})}
\\&+\frac{\qe{m_{\geq r}-1}-\qe{m_{>r}+1}}{q-1}\tspq{\lambda\pch{r,r}{r-1,r-1}, \Psi'(\chicrit)}
\\&+ \qe{m_{>j}}\tspq{\lambda\pch{r,r}{r-1,r-1}, \Psi'(\chicrit^{***})}
\\&+\sum_{k\in \und{\lambda}, r<k\leq j}  \left(\qe{m_{\geq k}}-\qe{m_{>k}}\right)\tspq{\lambda\pch{r,r}{r-1,r-1}, \Psi'(\chicrit^{***} \cup \{(k, r/2)\})}
\end{aligned}\right].
\end{align*}
Here, $m(r)$, $m_{\geq r}$, etc. are defined with respect to $\lambda$.  Also, we set 
\begin{enumerate}[label=$-$]
\item $\chicrit^*=\chicrit\cup  \{(r-1, \chi(r)-1)\}-\{(r,\chi(r))\}$, 
\item $\chicrit^{**}=\chicrit\cup  \{(r-2, (r-2)/2)\}-\{(r,r/2)\}$, and 
\item $\chicrit^{***}=\chicrit\cup  \{(r-1, (r-2)/2)\}-\{(r,r/2)\}$.
\end{enumerate}
When $r$ is critical, we set $j \in \und{\lambda}$ to be the largest value such that $\chi(j)=\chi(r)$ and $r'-\chi(r')\neq j-\chi(r)$ for any $r'\in \und{\lambda}$ such that $r'>j$. (Such $j$ always exists and may equal $r$.)
We ignore terms whose coefficients are zero.
\end{thm}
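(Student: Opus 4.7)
The plan is to assemble the calculations already carried out in Section \ref{sec:calculation}. Starting from Proposition \ref{prop:res}, which converts the restriction into a sum over $F$-fixed lines
\begin{equation*}
\Res^W_{W'} \tspq{\lambda, \chi} = \sum_{l \in \bP(\ker N)^F} \tspq{\lambda(l), \chi(l)},
\end{equation*}
I would stratify $\bP(\ker N)$ using the filtration $\ker N = \ker_{\geq 1} N \supset \ker_{\geq 2} N \supset \cdots$ of Section \ref{sec:standard}. Each subquotient $\bP(\ker_{\geq r} N) - \bP(\ker_{> r} N)$ is $F$-stable, so the restriction becomes
\begin{equation*}
\Res^W_{W'} \tspq{\lambda, \chi} = \sum_{r \in \und{\lambda}} \; \sum_{l \in (\bP(\ker_{\geq r} N) - \bP(\ker_{> r} N))^F} \tspq{\lambda(l), \chi(l)}.
\end{equation*}

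For each fixed $r \in \und{\lambda}$, the inner sum is evaluated by further partitioning the stratum into finitely many locally closed, $F$-stable pieces on which $(\lambda(l), \chi(l))$ is constant. These pieces are cut out by the vanishing of appropriate coordinates $a_{r,s}$ in the standard model of Section \ref{sec:standard}; Lemma \ref{lem:lamp} combined with direct computations with the bilinear form determines the constant value of $(\lambda(l), \chi(l))$ on each piece, and counting $\Fq$-points is elementary and produces the coefficients in terms of $\qe{m_{\geq r}}, \qe{m_{>r}}$, etc. The refinement depends on which of four mutually exclusive configurations $r$ falls into: (a) $r$ not critical; (b) $r$ critical with $\chi(r)\neq r/2$; (c) $r$ critical with $\chi(r)=r/2$ and $m(r)$ odd; (d) $r$ critical with $\chi(r)=r/2$ and $m(r)$ even. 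Subsections \ref{sec:case1}--\ref{sec:case4} treat these four configurations in turn and produce formulas matching exactly the four outer sums in the theorem statement.

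The substantive work has therefore already been discharged in Section \ref{sec:calculation}; the proof of the theorem is essentially the claim that these four analyses exhaust all possibilities and that the coefficients and critical-value modifications assemble correctly. The main obstacle is keeping the bookkeeping straight: in each sub-stratum one must correctly identify $\chicrit'$ as one of $\chicrit, \chicrit^*, \chicrit^{**}, \chicrit^{***}$, possibly augmented by a single additional pair indexed by $j$ or by some $k \in \und{\lambda} \cap [r,j]$, and this rests on the nontrivial observations about the perfectness of the pairings $\spn{\vv^{j}_{j,s} \mid s\in [1, m(j)]} \times \spn{\vv^{1}_{j,s} \mid s\in [1, m(j)]} \to \kk$ invoked repeatedly in Section \ref{sec:calculation} to detect the jump index $j$ appearing in Cases 2--4. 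Finally, one checks that the degenerate situations (where a coefficient of the form $\qe{\cdot}-\qe{\cdot}$ vanishes) coincide precisely with the empty strata explicitly ignored in Section \ref{sec:calculation}, so the convention of discarding zero-coefficient terms is consistent and no contribution is lost.
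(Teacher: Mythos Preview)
Your proposal is correct and follows essentially the same approach as the paper: the theorem is obtained by combining Proposition~\ref{prop:res} with the stratification of $\bP(\ker N)$ by the filtration $\ker_{\geq r} N$, and then inserting the case-by-case computations of Section~\ref{sec:calculation} for each of the four configurations of $r$. The paper's own proof is nothing more than the statement that the theorem summarizes the results of Section~\ref{sec:calculation}, which is precisely what you outline.
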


If we evaluate the above theorem at $q=1$, then we have an ungraded version which is an analogue of \cite[Theorem 5.3]{kim:euler} for $\Lie Sp_{2n}(\Ftbar)$.
\begin{cor} Let $\tsp{1}{-,-}\colonequals \tspq{-,-}|_{q=1}$. Then for $(\lambda, \chi)\in \Omega$, the character $\Res^W_{W'} \tsp{1}{\lambda, \chi}$ is equal to
\begin{align*}
\sum_{\substack{r\in\und{\lambda} \\\textup{ not critical}}}&
m_r \tsp{1}{\lambda\pch{r,r}{r-1,r-1}, \Psi'\left(\chicrit\right)}
\allowdisplaybreaks
\\+\sum_{\substack{r\in\und{\lambda} \textup{ critical,} \\ \chi(r) \neq r/2}}&
\left[
\begin{aligned}
&\tsp{1}{\lambda\pch{r,r}{r-1,r-1}, \Psi'(\chicrit \cup \{(r-1, \chi(r))\})}
\\&+(m_r-2)\tsp{1}{\lambda\pch{r,r}{r-1,r-1}, \Psi'(\chicrit)}
+ \tsp{1}{\lambda\pch{r,r}{r-1,r-1}, \Psi'(\chicrit^*)}
\end{aligned}\right]\allowdisplaybreaks
\\+\sum_{\substack{r\in\und{\lambda} \textup{ critical,} \\ \chi(r) = r/2, \\ m(r) \textup{ odd}}}&
\left(
(m_r-1)\tsp{1}{\lambda\pch{r,r}{r-1,r-1}, \Psi'(\chicrit)}
+\tsp{1}{\lambda\pch{r}{r-2}, \Psi'(\chicrit^{**})}
\right)\allowdisplaybreaks
\\+\sum_{\substack{r\in\und{\lambda} \textup{ critical,} \\ \chi(r) = r/2, \\ m(r) \textup{ even}}}&
\left[
\begin{aligned}
& \tsp{1}{\lambda\pch{r}{r-2}, \Psi'(\chicrit \cup \{(r-2, \frac{r-2}{2})\})}
\\&+(m_r-2)\tsp{1}{\lambda\pch{r,r}{r-1,r-1}, \Psi'(\chicrit)}+\tsp{1}{\lambda\pch{r,r}{r-1,r-1}, \Psi'(\chicrit^{***})}
\end{aligned}\right].
\end{align*}
Here, $m(r)$, $m_r$, etc. are defined with respect to $\lambda$.  Also, we set 
\begin{enumerate}[label=$-$]
\item $\chicrit^*=\chicrit\cup  \{(r-1, \chi(r)-1)\}-\{(r,\chi(r))\}$, 
\item $\chicrit^{**}=\chicrit\cup  \{(r-2, (r-2)/2)\}-\{(r,r/2)\}$, and 
\item $\chicrit^{***}=\chicrit\cup  \{(r-1, (r-2)/2)\}-\{(r,r/2)\}$.
\end{enumerate}
We ignore terms whose coefficients are zero.
\end{cor}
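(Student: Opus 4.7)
The plan is to obtain the corollary simply by specializing the formula in Theorem \ref{thm:main} at $q=1$, so that $\tspq{\lambda,\chi}$ becomes $\tsp{1}{\lambda,\chi}$ on both sides. Since $\Res^W_{W'}$ commutes with specialization of the parameter $q$, it suffices to evaluate each coefficient on the right-hand side of Theorem \ref{thm:main} at $q=1$ and verify that the result matches the coefficient appearing in the corollary.

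The key $q$-combinatorial identities I would use are:
\begin{enumerate}[label=(\roman*)]
\item $\qe{t}\big|_{q=1} = 1$ for every $t \in \bN$,
\item $(\qe{a} - \qe{b})\big|_{q=1} = 0$ for any $a, b \in \bN$, and
\item $\left.\dfrac{\qe{a}-\qe{b}}{q-1}\right|_{q=1} = a - b$, which follows either from L'H\^opital's rule or from the factorization $q^a - q^b = (q-1)(q^{a-1} + q^{a-2} + \cdots + q^b)$ (assuming $a \geq b$) evaluated at $q = 1$.
\end{enumerate}
Armed with these, I would then walk through the four summands of Theorem \ref{thm:main} in turn. In the non-critical case, the coefficient $\frac{\qe{m_{\geq r}-1}-\qe{m_{>r}-1}}{q-1}$ collapses to $m_{\geq r} - m_{>r} = m_r$ by (iii), while the second coefficient $\qe{m_{\geq r}-1}-\qe{m_{>r}-1}$ vanishes by (ii), leaving exactly the first summand of the corollary. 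In the critical case with $\chi(r)\neq r/2$, the four coefficients evaluate to $1$, $m_r - 2$, $1$, and $0$ respectively (the last since each summand $\qe{m_{\geq k}}-\qe{m_{>k}}$ vanishes by (ii)), matching the second summand. The third summand is checked similarly: the coefficients become $m_r - 1$, $0$, $1$, $0$. The fourth summand also matches: $1$, $m_r - 2$, $1$, $0$.

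The only substantive obstacle is careful bookkeeping, since each of the four cases contains terms of three distinct shapes (pure $q$-integers, differences, and divided differences), and one must not lose track of which terms survive specialization and which vanish identically. A minor point worth flagging is that terms whose coefficients are identically zero as polynomials in $q$ are already being ignored in Theorem \ref{thm:main}, and the same convention is adopted in the corollary; terms that survive as polynomials but happen to evaluate to zero at $q=1$ (such as the second coefficient in the non-critical case) simply drop out of the final formula. Once these specializations are done, the identity is immediate and no further geometric input is required.
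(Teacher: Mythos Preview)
Your proposal is correct and matches the paper's own approach exactly: the paper states the corollary with the single sentence ``If we evaluate the above theorem at $q=1$, then we have an ungraded version\ldots'', and you have simply spelled out this specialization term by term using the obvious identities for $q^t$, $q^a-q^b$, and $(q^a-q^b)/(q-1)$ at $q=1$.
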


\part{Total Springer representations in the exotic case} \label{part:exotic}
Our goal in this part is to prove Theorem \ref{thm:mainexo} which gives a restriction formula of the total Springer representations in the exotic case. The structure of this part is, mutatis mutandis, the same as the previous part.

\section{Setup}
In this part $\kk$ is the algebraic closure of $\mathbb{F}_p$ where $p$ is an odd prime number. Let $q$ be a power of $p$. Let $G$ be the general linear group $GL_{2n}$ over $\kk$ which is split over $\Fq$ and let $\fg$ be its Lie algebra. We regard $G$ as an automorphism group of a fixed $2n$-dimensional $\kk$-vector space $V$. Then $\fg$ is naturally identified with the endomorphism Lie algebra of $V$.

Let $\br{\ , \ }$ be a fixed symplectic form on $V$ defined over $\Fq$. This induces an involutive automorphism $\theta: G\rightarrow G$ such that for any $g\in G$ and $v, w \in V$, we have $\br{g^{-1}v,w} = \br{v, \theta(g)w}$, which also descends to $\theta: \fg \rightarrow \fg$. Since $\theta$ is an involution, we have an eigenspace decomposition $\fg= \fg^+\oplus \fg^-$ where $\fg^\pm = \{ X \in \fg \mid \br{Xv,w} \pm\br{v,Xw}=0 \textup{ for any } v, w \in V\}$. Note that $G^\theta$ is isomorphic to the symplectic group $Sp_{2n}$; we identify $W$ in \ref{sec:weyl} with the Weyl group of $G^\theta$. 

Let $\cB$ be the flag variety of $G^\theta$, which we usually realize as
$$\cB=\{F_\bullet = [0=F_0\subset F_1\subset \cdots \subset F_{2n-1} \subset F_{2n}=V] \mid \dim F_i = i, F_i^\perp=F_{2n-i} \textup{ for all } i\in [0,2n]\}.$$
For a nilpotent element $N \in \fg^-$ and $v \in V$, we define $\cB_{N,v}\colonequals \{F_\bullet \in \cB \mid NF_i \subset F_{i} \textup{ for all } i \in [0, 2n] \textup{ and } v \in F_n\}$, called the exotic Springer fiber of $(N, v)$. Then by \cite{kat09} (see also \cite{ss13}), there exists an action of $W$ on $H^i(\cB_{N,v})$ for each $i \in \bZ$, called the exotic Springer representation. When $F(N)=N$ and $F(v)=v$ (where $F$ is the geometric Frobenius with the given $\Fq$-structure), $F^*$ naturally acts on $H^i(\cB_{N,v})$ and it is known that $F^*$ and the $W$-action on $H^i(\cB_{N,v})$ commute.

\section{$G^\theta$-orbits in the exotic nilpotent cone}
Let $\cN^-$ be the set of nilpotent elements in $\fg^-$. The variety $\cN^-\times V$ is called the exotic nilpotent cone, originally defined in \cite[1.1]{kat09}. There is a $G^\theta$-action on $\cN^-\times V$ defined by $g\cdot (X, v) = (\Ad_g(X), g(v))$, i.e. a diagonal action.
\subsection{Parametrization of $G^\theta$-orbits in $\cN^-\times V$} \label{sec:parexo} We first give a parametrization of $G^\theta$-orbits in $\cN^-\times V$ as follows. (ref. \cite{ah08}, \cite{nrs18}) Set $\fP_2$ to be the set of pairs $(\mu,\nu)$ of partitions such that $|\mu|+|\nu|=n$. For a $G^\theta$-orbit $\cO \subset \cN^-\times V$, we attach $(\mu,\nu) \in \fP_2$ as follows. Choose any $(N, v) \in \cO$ and let $\lambda\vdash n$ be the partition such that $\lambda \cup \lambda$ is the Jordan type of $N$, and let $\hat{\lambda}$ be the Jordan types of $N$ on $V/\kk[N]v=V/\spn{N^mv \mid m \in \bN}$. Then $\mu=(\mu_1, \mu_2, \ldots), \nu=(\nu_1, \nu_2, \ldots)$ are set to be the unique pair of partitions such that $\lambda=\mu+\nu$ and $\hat{\lambda}=(\mu_1+\nu_1, \mu_2+\nu_1, \mu_2+\nu_2, \mu_3+\nu_2,\mu_3+\nu_3,\mu_4+\nu_3, \ldots)=(\mu+\nu)\cup(\mu_2+\nu_1, \mu_3+\nu_2, \mu_4+\nu_3, \ldots)$. Here $(\mu, \nu)$ is independent of the choice of $(N, v) \in \cO$ and this gives a bijective correspondence from the set of $G^\theta$-orbits in $\cN^-\times V$ to $\fP_2$.
\begin{rmk} $\mu$ and $\nu$ can be recovered from $\lambda$ and $\hat{\lambda}$ as $\nu_i = \lambda_{i}-\mu_i$ and $\mu_{i+1} = \hat{\lambda}_{2i}-\nu_i$ for $i \in \bZ_{>0}$, and $\mu_1 = 2n-|\hat{\lambda}|$.
\end{rmk}

By \cite[Theorem 1.14]{kat09}, the stabilizer in $G^\theta$ of any element in $\cN^-\times V$ under the diagonal action is connected. Therefore, the Lang-Steinberg theorem implies that for any $G^\theta$-orbit $\cO \subset \cN^-\times V$, its $F$-fixed points $\cO^F$ is nonempty and a single $(G^\theta)^F$-orbit. (See \cite[Chapter 3]{dm91} for more information.)

\subsection{A standard model} \label{sec:standardexo} Here we describe a standard choice of a geometric Frobenius $F$ and a pair $(N, v)$ in each $G^\theta$-orbit in $\cN^-\times V$ parametrized by $(\mu, \nu) \in \fP_2$. (Note that our choice is different from a normal basis in \cite{ah08}.) Set $\lambda=\mu+\nu$. We fix a basis $\{\vv^t_{r,s} \mid r \in \und{\lambda}, s\in [1, 2m_\lambda(r)], t\in [1, r]\}$ of $V$, and define a symplectic form $\br{\ , \ }$ to be
\begin{enumerate}[label=$\bullet$]
\item $\br{\vv^{t}_{r,2k-1},\vv^{r+1-t}_{r,2k}}=1$ for $k \in [1,m_\lambda(r)]$ and $t\in [1,r]$
\item $\br{\vv^{t}_{r,s}, \vv^{t'}_{r',s'}}=0$ otherwise
\end{enumerate}
and extend it to $V$ by bilinearity and skew-symmetry. We also set $F: V \rightarrow V$ to be $F(\sum_{r,s,t}a^t_{r,s} \vv^t_{r,s}) = \sum_{r,s,t} (a^t_{r,s})^q \vv^t_{r,s}$. Then the basis $\{\vv^t_{r,s}\}_{r,s,t}$ and the bilinear form $\br{\ , \ }$ are defined over $\Fq$. 

We define $N \in \End(V)$ as follows: for $r\in \und{\lambda}$ and $s\in[1, 2m_\lambda(r)]$ we set $N\vv_{r,s}^t = \vv_{r,s}^{t+1}$ if $t\in [1, r-1]$ and $N\vv_{r,s}^r = 0$, and extend it to $V$ by linearity. Then it is easy to show that the Jordan type of $N$ is $\lambda\cup \lambda$, $N \in \cN^-$, and $N$ is $F$-stable. We set $v\colonequals \sum_{r_i \in \und{\lambda}_v} \vv_{r_i,1}^{\nu_i+1} \in V$, where
\begin{align*}
\und{\lambda}_v\colonequals \{r_i \in \und{\lambda} \mid &\ \mu_i>\mu_j \textup{ for any } r_j\in\und{\lambda} \textup{ such that } r_i>r_j, \textup{ where } r_i=\mu_i+\nu_i \textup{ and } r_j=\mu_j+\nu_j\}.
\end{align*}Then $v$ is $F$-stable and the pair $(N,v)$ is contained in the $G^\theta$-orbit parametrized by $(\mu, \nu)$.

For $i \in \bN$, we set $\ker_{\geq i}N\colonequals \ker N \cap \im N^{i-1}$ and $\ker_{> i}N\colonequals \ker_{\geq i+1}N$. In particular, we have $\ker_{\geq i}N= \{0\}$ if $i > \lambda_1$. (Here we adopt the convention that $N^0 = Id$.) Then we have a filtration $\ker N = \ker_{\geq 1}N \supset  \ker_{\geq 2}N\supset \cdots$ 
and for $r \in \und{\lambda}$ we have $\ker_{\geq r}N = \spn{\vv^{r'}_{r',s} \mid r' \in \und{\lambda}, r'\geq r, s\in[1, 2m_\lambda(r')]}$. In particular, $\dim \ker_{\geq r}N = 2m_{\lambda}(\geq r)$.

\begin{center}
\begin{figure}[!hbtp]
\begin{tikzpicture}
	\yellowfill{0,-1}{1,0}
	\yellowfill{2,-1}{3,0}
	\yellowfill{8,-1}{9,0}
	\yellowfill{12,-1}{13,0}
	\orangeboxwlabel{0}{3}{2}{7}
	\orangeboxwlabel{2}{3}{2}{6}
	\orangeboxwlabel{4}{3}{2}{5}
	\orangeboxwlabel{6}{2}{2}{4}
	\orangeboxwlabel{8}{1}{4}{3}
	\orangeboxwlabel{12}{0}{2}{1}
	\baseline{14}
\end{tikzpicture}
\caption{$\mu=(4,3,2,2,2,2,1), \nu=(3,3,3,2,1,1)$ case}
\label{fig:exo1}
\end{figure}
\end{center}
\begin{example} \label{ex:exo1} Figure \ref{fig:exo1} illustrates the case when $\mu=(4,3,2,2,2,2,1)$ and $\nu=(3,3,3,2,1,1)$. Here, each box is considered as an element of the basis $\{\vv^{t}_{r,s}\}_{r,s,t}$ and $N$ acts by stepping down one box. Then the Jordan type of $N$ is $\lambda\cup\lambda$ where $\lambda=\mu+\nu=(7,6,5,4,3,3,1)$. Also, there is a dashed line dividing the diagram into two pieces, where the lower part (resp. the upper part) becomes the Young diagram of $\mu\cup \mu$ (resp. $\nu \cup \nu$). To calculate $\hat{\lambda}=(7,6,6,5,5,5,4,4,3,3,3,2,1)$, one may shift the lower part to the left by one box and read the sizes of the columns. Finally, we have $\und{\lambda}_v = \{7,6,3,1\}$ and $v \in V$ is defined to be the sum of basis elements which correspond to the shaded boxes.
\end{example}

\section{Total Springer representations and the restriction formula}
\subsection{Total Springer representations} \label{sec:greenexo} Recall the action of $W$ on $H^i(\cB_{N,v})$ for $(N,v) \in \cN^-\times V$. Suppose that $(N,v)$ is $F$-stable and the $G^\theta$-orbit containing $(N,v)$ is parametrized by $(\mu,\nu)$. Then the function $W \rightarrow \qlbar: w \mapsto \sum_{i\in \bZ}(-1)^i \tr(wF^*, H^i(\cB_{N,v}))$ is a character of $W$ that does not depend on the choice of $(N,v)$, which we denote by $\tspq{\mu,\nu}$.

\begin{rmk} Unlike \ref{sec:green}, in this case we may set $\tspq{\mu,\nu}(w) = \sum_{i \in \bZ} \tr(w, H^{2i}(\cB_{N,v}))q^i$ (after replacing $q$ with its sufficiently large power if necessary). This follows from the existence of an affine paving of $\cB_{N,v}$, see Section \ref{sec:affinepav}.
\end{rmk}

\subsection{Partial Springer resolution and the restriction formula} We claim the following proposition which is the key step of our calculation.
\begin{prop} \label{prop:resexo} Suppose that $(N,v) \in \cN^-\times V$ is an $F$-stable pair in the $G^\theta$-orbit parametrized by $(\mu, \nu) \in \fP_2$. Then we have
$$\Res_{W'}^W \tspq{\mu, \nu} = \sum_{l \in \bP(\ker N)^F} \tspq{\mu(l), \nu(l)}$$
as characters of $W'$. Here, $(\mu(l), \nu(l))$ is the parameter of the $Sp(l^\perp/l)$-orbit containing $(N|_{l^\perp/l}, v+l)$. (Note that $Sp(l^\perp/l)$ is isomorphic to $Sp_{2n-2}$.)
\end{prop}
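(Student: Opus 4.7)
The plan is to prove this in direct analogy with Proposition \ref{prop:res} (and its prototype \cite[Proposition 6.1]{kim:betti}): use the Borho-MacPherson formalism \cite{bm83} applied to a partial Springer resolution, now in the exotic setting of Kato \cite{kat09} and Shoji-Sorlin \cite{ss13}.

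First I would set up a partial exotic Springer resolution adapted to the maximal parabolic $P \subset G^\theta$ stabilizing an isotropic line in $V$; the Levi factor of $P$ is $GL_1 \times Sp_{2n-2}$. Concretely, introduce
$$Y^{(1)} \colonequals \{(N, v, l) \in \cN^- \times V \times \bP(V) \mid l \subset \ker N \textup{ and } v \in l^\perp\},$$
together with the natural projections $\pi^{(1)}: Y^{(1)} \to \cN^- \times V$ (forget $l$) and $q^{(1)}: Y^{(1)} \to \bP(V)$ (remember only $l$). The fiber of $\pi^{(1)}$ over an $F$-fixed $(N,v)$ is precisely $\{l \in \bP(\ker N)^F : v \in l^\perp\}$, while the fiber of $q^{(1)}$ over $l$ is canonically the exotic nilpotent cone of $Sp(l^\perp/l) \cong Sp_{2n-2}$ via $(N,v,l) \mapsto (N|_{l^\perp/l}, v+l)$. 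This realizes $\pi^{(1)}$ as the correct partial Springer map for the restriction from $W$ to $W'$.

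Next I would invoke the exotic analogue of Borho-MacPherson. The full exotic Springer map $\widetilde{\cN^-\times V} \to \cN^-\times V$ factors through $\pi^{(1)}$, and its pushforward of $\qlbar$ is (up to shift) a semisimple perverse sheaf carrying a $W$-action compatible with the exotic Springer representations \cite{kat09, ss13}. This factorization identifies the $W'$-restriction of the $W$-action on the stalk at an $F$-fixed $(N,v)$ with the direct sum, over $l$ in the $F$-fixed fiber of $\pi^{(1)}$, of the exotic Springer representations of $W'$ attached to $(N|_{l^\perp/l}, v+l)$. Passing to $F^*$-twisted traces via the Lefschetz trace formula then yields
$$\Res_{W'}^W \tspq{\mu,\nu}(w) = \sum_{\substack{l \in \bP(\ker N)^F \\ v \in l^\perp}} \tspq{\mu(l), \nu(l)}(w), \qquad w \in W'.$$
When $l \in \bP(\ker N)^F$ satisfies $v \notin l^\perp$, no complete isotropic flag with $F_1 = l$ can contain $v$ in its Lagrangian middle step, so the exotic Springer fiber is empty and the corresponding term may be declared zero; the sum thus extends trivially to all of $\bP(\ker N)^F$, as in the statement.

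The hard part, as in Proposition \ref{prop:res}, will be verifying that the Borho-MacPherson framework actually transfers to the exotic setting. Their arguments over $\bC$ adapt to positive-characteristic $\ell$-adic sheaves ($\ell \neq p$) without change, but their perverse-sheaf approach fundamentally requires a generic stratum on which the Springer map is a Galois $W$-cover. In the exotic theory this is played by pairs $(s,v) \in \fg^- \times V$ with $s$ regular semisimple (centralizer in $G^\theta$ a maximal torus) and $v$ in general position, and the needed semisimplicity of the exotic Springer sheaf together with the $W$-action have already been established in \cite{kat09} and \cite{ss13}. With these inputs in hand, the remainder is a transcription of \cite[Proposition 6.1]{kim:betti} mutatis mutandis.
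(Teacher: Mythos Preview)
Your proposal is correct and follows essentially the same route as the paper: factor the exotic Springer map through the partial resolution attached to the line-stabilizing parabolic, apply the Borho--MacPherson formalism in the exotic setting via \cite{kat09} and \cite{ss13}, and then finish as in \cite[Proposition 6.1]{kim:betti}. The paper's version is slightly sharper in that it isolates the two precise compatibilities that must be checked---that $\pi'_*\qlbar$ carries the correct $W'$-action fiberwise, and that this $W'$-action agrees with the restriction of the $W$-action on $\pi_*\qlbar$---and pins each down to specific results in \cite{ss13}; your explicit treatment of the $v\notin l^\perp$ case (empty fiber, zero contribution) is a useful clarification that the paper defers to later remarks.
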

\begin{proof} To this end, we need a Borho-MacPherson type formula \cite{bm83} concerning partial Springer resolution in the exotic case. Here our argument relies on \cite{ss13}. More precisely, we define $\widetilde{\cX}, \widetilde{\cX}^P, \cX, \pi': \widetilde{\cX} \rightarrow \widetilde{\cX}^P, \pi'': \widetilde{\cX}^P \rightarrow \cX$, and $\pi= \pi''\circ\pi'$ to be as in \cite[3.1, 4.1, 6.4]{ss13}. Then the following two things  need to be checked in order to argue similarly to \cite[Proposition 6.1]{kim:betti}: (1) $\pi'_* \qlbar_{\widetilde{X}}$ is equipped with a $W'$-action so that its induced action on the stalk at $(N, v, gP^\theta)\in \widetilde{\cX}^P$ is isomorphic to the exotic Springer action corresponding to the stabilizer of $l$ in $G^\theta$ (here $g \in G^\theta$ is chosen such that $gP^\theta g^{-1}$ is the stabilizer of $l$ in $G^\theta$), and (2) the induced $W'$-action on $\pi''_*(\pi'_* \qlbar_{\widetilde{X}}) = \pi_* \qlbar_{\widetilde{X}}$ coincides with the restriction of $W$-action to $W'$. Now the first part follows from (6.4.5), (6.5.3), and Theorem 4.2 in \cite{ss13}, and the second part follows from (6.4.5), (6.4.6), and Theorem 4.2 in \cite{ss13}. After this, the rest of the proof is identical with \cite[Proposition 6.1]{kim:betti}.
\end{proof}

\section{Calculation} \label{sec:calcexo}
Let $(N,v) \in \cN^-\times V$ be contained in a $G^\theta$-orbit parametrized by $(\mu, \nu) \in \fP_2$. The goal of this section is to calculate the RHS of the formula in Proposition \ref{prop:resexo}, i.e. for each $r \in \und{\lambda}$ where $\lambda$ is the partition such that $\lambda \cup \lambda$ is the Jordan type of $N$, we calculate $\sum_{l \in (\bP(\ker_{\geq r} N)-\bP(\ker_{> r} N))^F} \tspq{\mu(l), \nu(l)}$ using geometric argument.  For simplicity we assume that $(N,v)$ and $F$ are defined as in \ref{sec:standardexo} and let $l\colonequals \spn{\ww} \subset \ker_{\geq r}N - \ker_{>r}N$ for some $r\in \und{\lambda}$ where $\ww=\sum_{r'\geq r, s\in [1, 2m_{\lambda}(r')]} a_{r',s}\vv^{r'}_{r',s}$. Note that $l$ is $F$-stable if and only if there exists $c \in \kk-\{0\}$ such that $ca_{r',s}\in \Fq$ for all $r' \geq r$ and $s \in [1, 2m_\lambda(r')]$. Now we observe the following lemma.
\begin{lem} \label{lem:lamp} Suppose that $l$ is a line contained in $\bP(\ker_{\geq r} N)-\bP(\ker_{> r} N)$. Then the Jordan type of $N|_{l^\perp/l}$ is $(\lambda\cup \lambda)\pch{r,r}{r-1,r-1}=(\lambda\pch{r}{r-1})\cup (\lambda\pch{r}{r-1})$.
\end{lem}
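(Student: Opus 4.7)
The plan is to compute the Jordan type of $N|_{l^\perp/l}$ via the sequence $\sigma_k \colonequals \dim \ker (N|_{l^\perp/l})^k$. First I would verify that the setup makes sense: since the symplectic form is alternating and $\dim l = 1$, the line $l$ is isotropic, so $l \subset l^\perp$; and since $N$ is self-adjoint (as $N \in \fg^-$) with $Nl = 0$, the containment $N(l^\perp) \subset l^\perp$ holds, so $N$ descends to $l^\perp/l$. Setting $\rho_k \colonequals \dim \ker N^k$, $K_k \colonequals (N^k)^{-1}(l)$, and $L_k \colonequals K_k \cap l^\perp$, one has $\sigma_k = \dim L_k - 1$, so the goal reduces to showing $\sigma_k = \rho_k$ for $k < r$ and $\sigma_k = \rho_k - 2$ for $k \geq r$.

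The key algebraic input is the identity $(\ker N^k)^\perp = \im N^k$, which holds because $N^k$ is self-adjoint with respect to the non-degenerate symplectic form. For $k < r$, I would use $\ww \in \im N^{r-1} \subset \im N^k$ to pick $u_0$ with $N^k u_0 = \ww$, obtaining $K_k = \ker N^k \oplus \kk u_0$ of dimension $\rho_k + 1$. The key identity then gives $\ker N^k \subset l^\perp$ (since $\ww \in \im N^k = (\ker N^k)^\perp$), and the short calculation $\br{u_0, \ww} = \br{u_0, N^k u_0} = \br{N^k u_0, u_0} = -\br{u_0, \ww}$, using self-adjointness of $N^k$ together with antisymmetry of the form, forces $\br{u_0, \ww} = 0$ in odd characteristic; hence $K_k \subset l^\perp$, $L_k = K_k$, and $\sigma_k = \rho_k$. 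For $k \geq r$, the chain $\im N^k \subset \im N^r \not\ni \ww$ forces $K_k = \ker N^k$ and $\ker N^k \not\subset l^\perp$ (again by the key identity), so $L_k$ is a hyperplane in $K_k$ and $\sigma_k = \rho_k - 2$.

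Combining these yields $\sigma_k - \sigma_{k-1} = \rho_k - \rho_{k-1}$ for $k \neq r$ and $\sigma_r - \sigma_{r-1} = \rho_r - \rho_{r-1} - 2$, so the column heights of the Young diagram of $N|_{l^\perp/l}$ agree with those of $\lambda \cup \lambda$ except at column $r$, whose height drops by $2$; this corresponds exactly to replacing two parts of size $r$ by two parts of size $r-1$, giving $(\lambda \cup \lambda)\pch{r,r}{r-1,r-1}$, and the equality with $(\lambda\pch{r}{r-1}) \cup (\lambda\pch{r}{r-1})$ follows by comparing multiplicities. The main subtlety will be the vanishing $\br{u_0, \ww} = 0$, which depends essentially on odd characteristic and fails in the characteristic $2$ symplectic setting; this is precisely why the exotic case produces a single Jordan type rather than the two possibilities appearing in the analogous lemma of the first part.
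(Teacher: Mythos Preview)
Your proof is correct and follows essentially the same standard approach that the paper's references \cite[\S 2]{sho83} and \cite[Section 5]{kim:euler} use: compute the Jordan type via the kernel dimensions $\dim\ker(N|_{l^\perp/l})^k$, exploiting the identity $(\ker N^k)^\perp=\im N^k$ coming from self-adjointness of $N\in\fg^-$. Your observation that the vanishing $\br{u_0,\ww}=0$ hinges on odd characteristic is exactly the point that separates the exotic case from the characteristic~$2$ symplectic case treated in the first part of the paper.
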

\begin{proof} It is proved in almost the same manner to \cite[\S 2]{sho83} or \cite[Section 5]{kim:euler}.
\end{proof}

We set $(N', v') \colonequals (N|_{l^\perp/l}, v+l)$, and define $\lambda'$ to be the partition such that $\lambda'\cup \lambda'$ is the Jordan type of $N'$. Then we always have $\lambda' =\lambda\pch{r}{r-1}$ by Lemma \ref{lem:lamp}. Also we set $\hat{\lambda}$ (resp. $\hat{\lambda}'$) to be the Jordan type of $N$ on $V/\kk[N]v$ (resp. $N'$ on $l^\perp/(l+\kk[N]v)\cap l^\perp$). Note that calculating $(\mu', \nu')$ is equivalent to finding $\hat{\lambda}'$ by the remark in \ref{sec:parexo}. To this end, we give an explicit description of a Jordan basis. More precisely, we construct a set 
$$\fB = \{(v_i, r_i) \in (V-\{0\})\times \bZ_{>0} \mid i \in [1, k]\}$$
where $r_i = \min \{m \in \bZ_{>0} \mid N^mv_i \in (l+\kk[N]v)\cap l^\perp\}$ such that $\{N^mv_i+(l+\kk[N]v)\cap l^\perp\mid i\in [1,k], m\in [0, r_i-1]\}$ forms a basis of $l^\perp/(l+\kk[N]v)\cap l^\perp$. In such a case it is clear that $\hat{\lambda}'$ equals $\{r_1, r_2, \ldots, r_k\}$ as a multiset.

 Define $\nabla:\und{\lambda}\cup\{0\} \rightarrow \und{\mu}\cup\{0\}$ and $\Delta:\und{\lambda}\cup\{0\}\rightarrow \und{\nu}\cup\{0\}$  to be functions such that $\nabla(\lambda_{i}) = \mu_i$ and $\Delta(\lambda_{i}) = \nu_i$ for $i \in \bZ_{>0}$. (This is well-defined; if $\lambda_{i}=\lambda_{j}$ then $\mu_i=\mu_j$ and $\nu_i=\nu_j$.) Then the set $\und{\lambda}_v$ in \ref{sec:standardexo} can be written as $\und{\lambda}_v=\{\min \nabla^{-1} (r) \mid r \in \und{\mu}\}=\{r \in \und{\lambda} \mid \nabla(r)>\nabla(r') \textup{ for any } r' \in \und{\lambda}\cup\{0\} \textup{ such that } r'<r\}$, and also $\nabla$ restricts to a bijection $\nabla: \und{\lambda}_v \simeq \und{\mu}$. We set $\ns: \und{\lambda}_v \rightarrow \und{\lambda}_v \cup\{0\}$ (``next step'') to be such that $\ns(r)$ is the biggest value in $\und{\lambda}_v \cup\{0\}$ smaller than $r$. Then we may write $\hat{\lambda} = (\lambda\cup\lambda)\pch{r_1, \ldots, r_k}{\Delta(r_1)+\nabla(\ns(r_1)),\ldots, \Delta(r_k)+\nabla(\ns(r_k))}$ where  $\und{\lambda}_v = \{r_1, r_2, \ldots, r_k\}$. Indeed, this can be shown by choosing the set analogous to $\fB$ above for $V/\kk[N]v$ (instead of $l^\perp/(l+\kk[N]v)\cap l^\perp$) as follows:
\begin{enumerate}[label=$\bullet$, leftmargin=*]
\item $(\vv_{r',s}^1, r')$ if [$r'\not\in \und{\lambda}_v$] or [$r' \in\und{\lambda}_v$ and $s\neq 1$], and 
\item $(\sum_{r''\in \und{\lambda}_v, r''\geq r'}\vv_{r'',1}^{\Delta(r'')-\Delta(r')+1}, \Delta(r')+\nabla(\ns(r')))$ for $r' \in\und{\lambda}_v$.
\end{enumerate}

We denote $\mu(l), \nu(l), m_\lambda, m_{\lambda'}$ by $\mu',\nu', m, m'$, respectively. From now on we divide all the possibilities into the following four cases.
\begin{enumerate}
\item[(\ref{sec:case1exo})] $r \not \in\und{\lambda}_v$.
\item[(\ref{sec:case2exo})] $r \in \und{\lambda}_v$, and $\nabla(r')>\nabla(r), \Delta(r')>\Delta(r)$ for any $r' \in \und{\lambda}$ such that $r'>r$.
\item[(\ref{sec:case3exo})] $r \in \und{\lambda}_v$, and there exists $r' \in \und{\lambda}$ such that $r'>r$ and $\Delta(r')=\Delta(r)$
\item[(\ref{sec:case4exo})] $r \in \und{\lambda}_v$, and there exists $r' \in \und{\lambda}$ such that $r'>r$ and $\nabla(r')=\nabla(r)$
\end{enumerate}

\subsection{}\label{sec:case1exo} First suppose that $r\not \in \und{\lambda}_v$, i.e. $\nabla(r) = \nabla(r')$ for some $r' \in \und{\lambda}$ such that $r'<r$. (It corresponds to $r\in \{4,5\}$ case in Example \ref{ex:exo1}.) 
We claim that $\mu'=\mu$ and $\nu'=\nu\pch{\Delta(r)}{\Delta(r)-1}$. Indeed, recall that $l=\spn{\ww}$ where $\ww=\sum_{r'\geq r, s\in[1, 2m(r')]} a_{r',s}\vv^{r'}_{r',s}$ and suppose that $i \in [1, 2m(r)]$ is the largest value such that $a_{r,i}\neq 0$, in which case it is safe to assume that $a_{r,i}= 1$. If $i$ is odd (resp. even) then we set $i'=i+1$ and $\vv'=-\vv_{r,i+1}^1$ (resp. $i'=i-1$ and $\vv'=\vv_{r,i-1}^1$) so that $\br{\vv', \ww}=1$. Now we choose the elements which comprise the set $\fB$ as follows.
\begin{enumerate}[label=$\bullet$, leftmargin=*]
\item $(\vv_{r',s}^1-\br{\vv_{r',s}^1,\ww}\vv', r')$ for 
\begin{enumerate}[label=$-$]
\item $r'\not\in \und{\lambda}_v\cup\{r\}$ and $s\in [1, 2m(r')]$,
\item $r' \in\und{\lambda}_v$ and $s\in[2,2m(r')]$, or 
\item $r'=r$ and $s\in [1,2m(r)]-\{i, i'\}$
\end{enumerate}
\item $(\vv''-\br{\vv'',\ww}\vv', \Delta(r')+\nabla(\ns(r')))$ where $\vv''=\sum_{r''\in \und{\lambda}_v, r''\geq r'}\vv_{r'',1}^{\Delta(r'')-\Delta(r')+1}$ for $r' \in\und{\lambda}_v$
\item $(\sum_{r'\geq r, s\in[1, 2m(r')]} a_{r',s}\vv^{r'-(r-1)}_{r',s}, r-1)$
\item $(N\vv_{r,i'}^1=\vv^2_{r,i'}, r-1)$
\end{enumerate}
From this description of $\fB$ it follows that $\hat{\lambda}'=\hat{\lambda}\pch{r,r}{r-1,r-1}$ and one can easily calculate that $(\mu', \nu')=(\mu,\nu\pch{\Delta(r)}{\Delta(r)-1})$. Since $\#(\bP(\ker_{\geq r}N)-\bP(\ker_{>r}N))^F=\frac{\qe{2m({\geq r})}-\qe{2m({>r})}}{q-1}$, we have
$$\sum_{l \in (\bP(\ker_{\geq r} N)-\bP(\ker_{> r} N))^F} \tspq{\mu', \nu'}=\frac{\qe{2m({\geq r})}-\qe{2m({>r})}}{q-1} \tspq{\mu,\nu\pch{\Delta(r)}{\Delta(r)-1}}.$$

\subsection{}\label{sec:case2exo} Let us assume that $r\in \und{\lambda}_v$ and $\nabla(r')>\nabla(r), \Delta(r')>\Delta(r)$ for any $r' \in \und{\lambda}$ such that $r'>r$. (It corresponds to $r\in \{1,7\}$ case in Example \ref{ex:exo1}. In particular, this includes the case when $r=\lambda_1$.) 
Recall that $l=\spn{\ww}$ where $\ww=\sum_{r'\geq r, s\in [1, 2m_{\lambda}(r')]} a_{r',s}\vv^{r'}_{r',s}$.
Let $H\subset \bP(\ker_{\geq r}N)$ be the hyperplane defined by $a_{r, 2}=0$ which contains $\bP(\ker_{>r}N)$. If $l \not \in H$ then we may assume that $a_{r,2}=1$. Assume that $\Delta(r) >0$ for now and set $\vv'=\sum_{r'\in \und{\lambda}_v, r'\geq \sigma}\vv_{r',1}^{\Delta(r')-\Delta(r)+1}$, where $\sigma=\min\{r' \in \und{\lambda}_v  \mid \Delta(r) = \Delta(r')\}$. The the assumption implies that $\br{\vv',\ww}=1$. Here we take the set $\fB$ as follows.
\begin{enumerate}[label=$\bullet$, leftmargin=*]
\item $(\vv_{r',s}^1-\br{\vv_{r',s}^1,\ww}\vv', r')$ for
\begin{enumerate}[label=$-$]
\item $r'\not\in \und{\lambda}_v$ and $s\in [1, 2m(r')]$,
\item $r' \in\und{\lambda}_v-\{r\}$ and $s\in[2,2m(r')]$, or
\item $r'=r$ and $s\in [3,2m(r)]$
\end{enumerate}
\item $(\vv''-\br{\vv'',\ww}\vv', \Delta(r')+\nabla(\ns(r')))$ where $\vv''=\sum_{r''\in \und{\lambda}_v, r''\geq r'}\vv_{r'',1}^{\Delta(r'')-\Delta(r')+1}$ for $r' \in\und{\lambda}_v-\{\sigma\}$
\item $(\sum_{r'\geq r, s\in [1, 2m(r')]} a_{r',s}\vv^{r'-(r-1)}_{r',s}, r-1)$
\item $(N\vv'=\sum_{r'\in \und{\lambda}_v, r'\geq \sigma}\vv_{r',1}^{\Delta(r')-\Delta(r)+2}, \Delta(\sigma)+\nabla(\ns(\sigma))-1)$
\end{enumerate}
From this we have $\hat{\lambda}' = \hat{\lambda}\pch{r, \Delta(\sigma)+\nabla(\ns(\sigma))}{r-1,\Delta(\sigma)+\nabla(\ns(\sigma))-1}$, and one can show that $\mu'=\mu^{+[m(\geq r)+1,M(\nu)]}$ and $\nu'=(\nu\pch{\Delta(r)}{\Delta(r-1)})^{-[m(\geq r)+1,M(\nu)]}=\nu^{-[m(\geq r),M(\nu)]}$ where $M(\nu)=m_\nu(\geq\Delta(r))$.
As $\#(\bP(\ker_{\geq r}N)-H)^F = \qe{2m(\geq r)-1}$, we have
$$\sum_{l \in (\bP(\ker_{\geq r}N)-H)^F}\tspq{\mu',\nu'} =  \qe{2m(\geq r)-1}\tspq{\mu^{+[m(\geq r)+1,M(\nu)]},\nu^{-[m(\geq r),M(\nu)]}}.$$
\begin{rmk} If $\Delta(r)=0$ (as $r=1$ case in Example \ref{ex:exo1}), then $\nu^{-[m(\geq r),m_\nu(\geq \Delta(r))]}$ is not well-defined. In this case we simply ignore this term, since in this case $v\not\in l^\perp$ which means that the corresponding exotic Springer fiber is an empty set.
\end{rmk}

Set $H' \subset H$ to be a linear subvariety defined by ($a_{r,2}=0$ and) $a_{r,3}=\cdots=a_{r,2m(r)}=0$ which contains $\bP(\ker_{>r}N)$. We assume $m(r)>1$, $l\in H-H'$, and that $i \in [3, 2m(r)]$ is the largest integer such that $a_{r,i}\neq 0$, in which case it is safe to assume that $a_{r,i}= 1$. If $i$ is odd (resp. even) then we set $i'=i+1$ and $\vv'=-\vv_{r,i+1}^1$ (resp. $i'=i-1$ and $\vv'=\vv_{r,i-1}^1$) so that $\br{\vv', \ww}=1$. Now we take the set $\fB$ as follows.
\begin{enumerate}[label=$\bullet$, leftmargin=*]
\item $(\vv_{r',s}^1-\br{\vv_{r',s}^1,\ww}\vv', r')$ for 
\begin{enumerate}[label=$-$]
\item $r'\not\in \und{\lambda}_v$ and $s\in [1, 2m(r')]$,
\item $r' \in\und{\lambda}_v-\{r\}$ and $s\in[2,2m(r')]$, or 
\item $r'=r$ and $s\in [2,2m(r)]-\{i, i'\}$
\end{enumerate}
\item $(\vv''-\br{\vv'',\ww}\vv', \Delta(r')+\nabla(\ns(r')))$ where $\vv''=\sum_{r''\in \und{\lambda}_v, r''\geq r'}\vv_{r'',1}^{\Delta(r'')-\Delta(r')+1}$ for $r' \in\und{\lambda}_v$
\item  $(\sum_{r'\geq r, s\in[1,2m(r')]} a_{r',s}\vv^{r'-(r-1)}_{r',s}, r-1)$
\item $(N\vv^1_{r,i'}=\vv^2_{r,i'}, r-1)$
\end{enumerate}
From this it follows that $\hat{\lambda}' = \hat{\lambda}\pch{r,r}{r-1,r-1}$, which implies that $\mu'=\mu\pch{\nabla(r)}{\nabla(r)-1}$ and $\nu'=\nu$. As $\#(H-H')^F = \frac{\qe{2m(\geq r)-1}-\qe{2m(>r)+1}}{q-1}$, we have
$$\sum_{l \in (H-H')^F}\tspq{\mu',\nu'} =   \frac{\qe{2m(\geq r)-1}-\qe{2m(>r)+1}}{q-1}\tspq{\mu\pch{\nabla(r)}{\nabla(r)-1},\nu}.$$
When $m(r)=1$, then $H=H'$, i.e. $H-H'=\emptyset$, which is consistent with the fact that the coefficient of the term above is zero. In such a case we simply ignore this term.

Finally we assume that $l \in H'$. Then we may assume that $a_{r,1}=1$ and thus $\br{-\vv_{r,2}^1,\ww}=1$. First suppose that $r=\lambda_1$ so that $\ww=\vv_{r,1}^r$. Then we take the set $\fB$ as follows. 
\begin{enumerate}[label=$\bullet$, leftmargin=*]
\item $(\vv_{r',s}^1+\br{\vv_{r',s}^1,\ww}\vv_{r,2}^1, r')$ for 
\begin{enumerate}[label=$-$]
\item $r'\not\in \und{\lambda}_v$ and $s\in [1, 2m(r')]$,
\item $r' \in\und{\lambda}_v-\{r\}$ and $s\in[2,2m(r')]$, or 
\item $r'=r$ and $s\in [3,2m(r)]$
\end{enumerate}
\item $(\vv'', \Delta(r')+\nabla(\ns(r')))$ where $\vv''=\sum_{r''\in \und{\lambda}_v, r''\geq r'}\vv_{r'',1}^{\Delta(r'')-\Delta(r')+1}$ for $r' \in\und{\lambda}_v$
\item $(N\vv^1_{r,2}=\vv^2_{r,2}, r-1)$
\end{enumerate}
(Note that $l \subset \kk[N]v$ in this case.) It follows that $\hat{\lambda}' = \hat{\lambda}\pch{r}{r-1}$. On the other hand, if we assume that $r \neq \lambda_1$ then there exists $\varpi \in \und{\lambda}_v$ such that $\ns(\varpi)=r$. Now we take the set $\fB$ as follows. 
\begin{enumerate}[label=$\bullet$, leftmargin=*]
\item $(\vv_{r',s}^1+\br{\vv_{r',s}^1,\ww}\vv_{r,2}^1, r')$ for 
\begin{enumerate}[label=$-$]
\item $r'\not\in \und{\lambda}_v$ and $s\in [1, 2m(r')]$,
\item $r' \in\und{\lambda}_v-\{r\}$ and $s\in[2,2m(r')]$, or 
\item $r'=r$ and $s\in [3,2m(r)]$
\end{enumerate}
\item $(\vv''+\br{\vv'',\ww}\vv_{r,2}^1, \Delta(r')+\nabla(\ns(r')))$ where $\vv''=\sum_{r''\in \und{\lambda}_v, r''\geq r'}\vv_{r'',1}^{\Delta(r'')-\Delta(r')+1}$ for $r' \in\und{\lambda}_v-\{\varpi\}$
\item $(\vv'''+\br{\vv''',\ww}\vv_{r,2}^1, \Delta(\varpi)+\nabla(r)-1)$ where 
$$\vv''' = \sum_{r'\in \und{\lambda}_v, r'\geq \varpi}\vv_{r',1}^{\Delta(r')-\Delta(\varpi)+1}-\sum_{r'>r, s\in[1,2m(r')]} a_{r',s}\vv^{r'-(\Delta(\varpi)+\nabla(r)-1)}_{r',s}$$
\item $(N\vv^1_{r,2}=\vv^2_{r,2}, r-1)$
\end{enumerate}
It follows that $\hat{\lambda}' = \hat{\lambda}\pch{r, \Delta(\varpi)+\nabla(\ns(\varpi))}{r-1, \Delta(\varpi)+\nabla(\ns(\varpi))-1}$ as $\ns(\varpi) = r$. In any case, direct calculation shows that $\mu'=\mu^{-[m(>r)+1, M(\mu)]}$ and $\nu'=(\nu^{+[m(>r)+1,M(\mu)]})\pch{\Delta(r)+1}{\Delta(r)} = \nu^{+[m(>r)+1,M(\mu)-1]}$.
where $M(\mu)=m_\mu(\geq \nabla( r))$.
As $\#(H'-\bP(\ker_{>r}))^F = \qe{2m(>r)}$, it follows that
$$\sum_{l \in (H'-\bP(\ker_{>r}N))^F}\tspq{\mu',\nu'} =  \qe{2m(> r)}\tspq{\mu^{-[m(>r)+1, M(\mu)]},\nu^{+[m(>r)+1,M(\mu)-1]}}.$$

\subsection{}\label{sec:case3exo}
This time we assume that $r \in \und{\lambda}_v$ and there exists $r' \in \und{\lambda}$ such that $r'>r$ and $\Delta(r')=\Delta(r)$. (This corresponds to $r=6$ case in Example \ref{ex:exo1}.) We set $\sigma$ and $\beta$ to be the smallest and biggest entry in $\und{\lambda}_v $, respectively, such that $\Delta(\sigma) = \Delta(\beta) = \Delta(r)$.
%
%
We define $H\subset \bP(\ker_{\geq r}N)$ to be the hyperplane defined by $\sum_{r' \in \und{\lambda}_v, r\leq r'\leq	\beta} a_{r',2}=0$. Here $H$ does not contain $\bP(\ker_{>r}N)$ but $H$ and $\bP(\ker_{>r}N)$ intersect transversally in $\bP(\ker_{\geq r}N)$. We assume that $l \not\in H$ and thus we may set $\sum_{r' \in \und{\lambda}_v, r\leq r'\leq \beta} a_{r',2}=1$. Suppose that $\Delta(r)>0$ for now and let $\vv' =\sum_{r'\in \und{\lambda}_v, r'\geq \sigma}\vv_{r',1}^{\Delta(r')-\Delta(r)+1}$, so that $\br{\vv',\ww}=1$. Also let $i \in [1, 2m(r)]$ be the biggest value such that $a_{r,i}\neq 0$. If $i \neq 1$, then we take the set $\fB$ as follows.
\begin{enumerate}[label=$\bullet$, leftmargin=*]
\item $(\vv_{r',s}^1-\br{\vv_{r',s}^1,\ww}\vv', r')$ for 
\begin{enumerate}[label=$-$]
\item $r'\not\in \und{\lambda}_v$ and $s\in [1, 2m(r')]$,
\item $r' \in\und{\lambda}_v-\{r\}$ and $s\in[2,2m(r')]$, or 
\item $r'=r$ and $s\in [2,2m(r)]-\{i\}$
\end{enumerate}
\item $(\vv''-\br{\vv'',\ww}\vv', \Delta(r')+\nabla(\ns(r')))$ where $\vv''=\sum_{r''\in \und{\lambda}_v, r''\geq r'}\vv_{r'',1}^{\Delta(r'')-\Delta(r')+1}$ for $r' \in\und{\lambda}_v-\{\sigma\}$
\item $(\sum_{r'\geq r, s\in[1, 2m(r')]} a_{r',s}\vv^{r'-(r-1)}_{r',s}, r-1)$
\item $(N\vv' = \sum_{r'\in \und{\lambda}_v, r'\geq \sigma}\vv_{r',1}^{\Delta(r')-\Delta(r)+2}, \Delta(r)+\nabla(\ns(\sigma))-1)$
\end{enumerate}
It follows that $\hat{\lambda}' = \hat{\lambda}\pch{r, \Delta(\sigma)+\nabla(\ns(\sigma))}{r-1, \Delta(\sigma)+\nabla(\ns(\sigma))-1}$ as $\Delta(r) = \Delta(\sigma)$.
On the other hand, if $i = 1$ then set $\varpi$ to be the entry in $\und{\lambda}_v$ such that $\ns(\varpi) = r$ and we take the set $\fB$ as follows.
\begin{enumerate}[label=$\bullet$, leftmargin=*]
\item $(\vv_{r',s}^1-\br{\vv_{r',s}^1,\ww}\vv', r')$ for 
\begin{enumerate}[label=$-$]
\item $r'\not\in \und{\lambda}_v$ and $s\in [1, 2m(r')]$, or
\item $r' \in\und{\lambda}_v$ and $s\in[2,2m(r')]$
\end{enumerate}
\item $(\vv''-\br{\vv'',\ww}\vv', \Delta(r')+\nabla(\ns(r')))$ where $\vv''=\sum_{r''\in \und{\lambda}_v, r''\geq r'}\vv_{r'',1}^{\Delta(r'')-\Delta(r')+1}$ for $r' \in\und{\lambda}_v-\{\sigma, \varpi\}$
\item $(\vv'''-\br{\vv''',\ww}\vv', \Delta(\varpi)+\nabla(r)-1)$ where 
$$\vv''' = \sum_{r'\in \und{\lambda}_v, r'\geq \varpi}\vv_{r',1}^{\Delta(r')-\Delta(\varpi)+1}-\sum_{r'> r, s\in[1,2m(r')]} a_{r',s}\vv^{r'-(\Delta(\varpi)+\nabla(r)-1)}_{r',s}$$
\item $(N\vv' = \sum_{r'\in \und{\lambda}_v, r'\geq \sigma}\vv_{r',1}^{\Delta(r')-\Delta(r)+2}, \Delta(r)+\nabla(\ns(\sigma))-1)$
\end{enumerate}
It follows that $\hat{\lambda}' = \hat{\lambda}\pch{r, \Delta(\sigma)+\nabla(\ns(\sigma))}{r-1, \Delta(\sigma)+\nabla(\ns(\sigma))-1}$ as $\Delta(\varpi) = \Delta(r)= \Delta(\sigma)$. Thus in any case, we have $\mu'=\mu^{+[m(\geq r)+1,M(\nu)]}$ and $\nu'=(\nu\pch{\Delta(r)}{\Delta(r-1)})^{-[m(\geq r)+1,M(\nu)]}=\nu^{-[m(\geq r),M(\nu)]}$
where $M(\nu)=m_\nu(\geq \Delta(r))$.
As $\#(\bP(\ker_{\geq r}N) - H\cup\bP(\ker_{>r}N))^F = \qe{2m(\geq r)-1}-\qe{2m(>r)-1}$, we have
\begin{align*}
&\sum_{l \in (\bP(\ker_{\geq r}N) - H\cup\bP(\ker_{>r}N))^F}\tspq{\mu',\nu'} 
\\&= (\qe{2m(\geq r)-1}-\qe{2m(>r)-1})\tspq{\mu^{+[m(\geq r)+1,M(\nu)]},\nu^{-[m(\geq r),M(\nu)]}}.
\end{align*}
\begin{rmk} As in \ref{sec:case2exo}, if $\Delta(r)=0$  then $\nu^{-[m(\geq r),m_\nu(\geq \Delta(r))]}$ is not well-defined. In this case we simply ignore this term, since in this case $v\not\in l^\perp$ which means that the corresponding exotic Springer fiber is an empty set.
\end{rmk}

Now we assume that $l \in H$ and let $i \in [1, 2m(r)]$ be the biggest value such that $a_{r,i}\neq 0$, in which case it is safe to set $a_{r,i}= 1$. First we assume that $i \geq 3$. If $i$ is odd (resp. even) then we set $i'=i+1$ and $\vv'=-\vv_{r,i+1}^1$ (resp. $i'=i-1$ and $\vv'=\vv_{r,i-1}^1$) so that $\br{\vv', \ww}=1$. If $i \not\in \{1,2\}$, then we take the set $\fB$ as follows.
\begin{enumerate}[label=$\bullet$, leftmargin=*]
\item $(\vv_{r',s}^1-\br{\vv_{r',s}^1,\ww}\vv', r')$ for 
\begin{enumerate}[label=$-$]
\item $r'\not\in \und{\lambda}_v$ and $s\in [1, 2m(r')]$,
\item $r' \in\und{\lambda}_v-\{r\}$ and $s\in[2,2m(r')]$, or 
\item $r'=r$ and $s\in [2,2m(r)]-\{i, i'\}$
\end{enumerate}
\item $(\vv''-\br{\vv'',\ww}\vv', \Delta(r')+\nabla(\ns(r')))$ where $\vv''=\sum_{r''\in \und{\lambda}_v, r''\geq r'}\vv_{r'',1}^{\Delta(r'')-\Delta(r')+1}$ for $r' \in\und{\lambda}_v$
\item  $(\sum_{r'\geq r, s\in[1, 2m(r')]} a_{r',s}\vv^{r'-(r-1)}_{r',s}, r-1)$
\item $(N\vv^1_{r,i'}=\vv^2_{r,i'}, r-1)$
\end{enumerate}
If $i=2$ then set $\varpi \in \und{\lambda}_v$ to be such that $\ns(\varpi)=r$ and we take the set $\fB$ as follows. (Note that $\br{\vv_{r,1}^1,\ww}=1$.)
\begin{enumerate}[label=$\bullet$, leftmargin=*]
\item $(\vv_{r',s}^1-\br{\vv_{r',s}^1,\ww}\vv_{r,1}^1, r')$ for 
\begin{enumerate}[label=$-$]
\item $r'\not\in \und{\lambda}_v$ and $s\in [1, 2m(r')]$,
\item $r' \in\und{\lambda}_v-\{r\}$ and $s\in[2,2m(r')]$, or 
\item $r'=r$ and $s\in [3,2m(r)]$
\end{enumerate}
\item $(\vv''-\br{\vv'',\ww}\vv_{r,1}^1, \Delta(r')+\nabla(\ns(r')))$ where $\vv''=\sum_{r''\in \und{\lambda}_v, r''\geq r'}\vv_{r'',1}^{\Delta(r'')-\Delta(r')+1}$ for $r' \in\und{\lambda}_v-\{\varpi\}$
\item  $(\sum_{r'\geq r, s\in[1, 2m(r')]} a_{r',s}\vv^{r'-(r-1)}_{r',s}, r-1)$
\item $(N\vv_{r,1}^1=\vv_{r,1}^2, r-1)$
\end{enumerate}
(Here we use the fact that $\br{\sum_{r'\in \und{\lambda}_v, r'\geq \varpi}\vv_{r',1}^{\Delta(r')-\Delta(\varpi)+1},\ww}=-a_{r,2}=-1$.) Finally, if $i=1$ then again set $\varpi \in \und{\lambda}_v$ to be such that $\ns(\varpi)=r$ and  take the set $\fB$ as follows. (Note that $\br{-\vv_{r,2}^1,\ww}=1$.)
\begin{enumerate}[label=$\bullet$, leftmargin=*]
\item $(\vv_{r',s}^1+\br{\vv_{r',s}^1,\ww}\vv_{r,2}^1, r')$ for 
\begin{enumerate}[label=$-$]
\item $r'\not\in \und{\lambda}_v$ and $s\in [1, 2m(r')]$,
\item $r' \in\und{\lambda}_v-\{r\}$ and $s\in[2,2m(r')]$, or 
\item $r'=r$ and $s\in [3,2m(r)]$
\end{enumerate}
\item $(\vv''+\br{\vv'',\ww}\vv_{r,2}^1, \Delta(r')+\nabla(\ns(r')))$ where $\vv''=\sum_{r''\in \und{\lambda}_v, r''\geq r'}\vv_{r'',1}^{\Delta(r'')-\Delta(r')+1}$ for $r' \in\und{\lambda}_v-\{\varpi\}$
\item $(\vv''',\Delta(\varpi)+\nabla(r)-1)$ where 
$$\vv''' = \sum_{r'\in \und{\lambda}_v, r'\geq \varpi}\vv_{r',1}^{\Delta(r')-\Delta(\varpi)+1}-\sum_{r'\geq \varpi, s\in[1,2m(r')]} a_{r',s}\vv^{r'-(\Delta(\varpi)+\nabla(r)-1)}_{r',s}$$
\item $(N\vv_{r,2}^1=\vv_{r,2}^2, r-1)$
\end{enumerate}
In any case, it follows that $\hat{\lambda}' = \hat{\lambda}\pch{r,r}{r-1,r-1}$, which implies that $\mu'=\mu\pch{\nabla(r)}{\nabla(r)-1}$ and $\nu'=\nu$. As $\#(H-\bP(\ker_{>r}N))^F = \frac{\qe{2m(\geq r)-1}-\qe{2m(>r)-1}}{q-1}$, we have
$$\sum_{l \in (H-\bP(\ker_{>r}N))^F}\tspq{\mu',\nu'} =   \frac{\qe{2m(\geq r)-1}-\qe{2m(>r)-1}}{q-1}\tspq{\mu\pch{\nabla(r)}{\nabla(r)-1},\nu}.$$

\subsection{}\label{sec:case4exo} Finally, we assume that $r$ is contained in $\und{\lambda}_v$ and there exists $r' \in \und{\lambda}$ such that $r'>r$ and $\nabla(r')=\nabla(r)$. (It corresponds to $r=3$ case in Example \ref{ex:exo1}.)
Let $H \subset \bP(\ker_{\geq r}N)$ be the hyperplane defined by $a_{r,2}= 0$ which contains $\bP(\ker_{>r}N)$. If $l \not\in H$, then we may assume that $a_{r,2}=1$. We assume $\Delta(r)>0$ for now and set $\vv'=\sum_{r'\in \und{\lambda}_v, r'\geq \sigma}\vv_{r',1}^{\Delta(r')-\Delta(r)+1}$, where $\sigma=\min\{r' \in \und{\lambda}_v   \mid \Delta(r) = \Delta(r')\}$.  The the assumption implies that $\br{\vv',\ww}=1$. Here we take the set $\fB$ as follows.
\begin{enumerate}[label=$\bullet$, leftmargin=*]
\item $(\vv_{r',s}^1-\br{\vv_{r',s}^1,\ww}\vv', r')$ for
\begin{enumerate}[label=$-$]
\item $r'\not\in \und{\lambda}_v$ and $s\in [1, 2m(r')]$,
\item $r' \in\und{\lambda}_v-\{r\}$ and $s\in[2,2m(r')]$, or
\item $r'=r$ and $s\in [3,2m(r)]$
\end{enumerate}
\item $(\vv''-\br{\vv'',\ww}\vv', \Delta(r')+\nabla(\ns(r')))$ where $\vv''=\sum_{r''\in \und{\lambda}_v, r''\geq r'}\vv_{r'',1}^{\Delta(r'')-\Delta(r')+1}$ for $r' \in\und{\lambda}_v-\{\sigma\}$
\item $(\sum_{r'\geq r, s\in[1, 2m(r')]} a_{r',s}\vv^{r'-(r-1)}_{r',s}, r-1)$
\item $(N\vv'=\sum_{r'\in \und{\lambda}_v, r'\geq \sigma}\vv_{r',1}^{\Delta(r')-\Delta(r)+2}, \Delta(\sigma)+\nabla(\ns(\sigma))-1)$
\end{enumerate}
From this we have $\hat{\lambda}' = \hat{\lambda}\pch{r, \Delta(\sigma)+\nabla(\ns(\sigma))}{r-1,\Delta(\sigma)+\nabla(\ns(\sigma))-1}$, from which it follows that $\mu'=\mu^{+[m(\geq r)+1,M(\nu)]}$ and $\nu'=(\nu\pch{\Delta(r)}{\Delta(r-1)})^{-[m(\geq r)+1,M(\nu)]}=\nu^{-[m(\geq r),M(\nu)]}$
where $M(\nu)=m_\nu(\geq \Delta(r))$.
As $\#(\bP(\ker_{\geq r}N)-H)^F = \qe{2m(\geq r)-1}$, we have
$$\sum_{l \in (\bP(\ker_{\geq r}N)-H)^F}\tspq{\mu',\nu'} =  \qe{2m(\geq r)-1}\tspq{\mu^{+[m(\geq r)+1,M(\nu)]},\nu^{-[m(\geq r),M(\nu)]}}.$$
\begin{rmk} As in \ref{sec:case2exo} and  \ref{sec:case3exo}, if $\Delta(r)=0$  then $\nu^{-[m(\geq r),m_\nu(\geq \Delta(r))]}$ is not well-defined. In this case we simply ignore this term, since in this case $v\not\in l^\perp$ which means that the corresponding exotic Springer fiber is an empty set.
\end{rmk}

Set $H' \subset H$ to be a linear subvariety defined by ($a_{r,2}=0$ and) $a_{r,3}=\cdots=a_{r,2m(r)}=0$ which contains $\bP(\ker_{>r}N)$. We assume $m(r)>1$, $l\in H-H'$, and that $i \in [3, 2m(r)]$ is the biggest value such that $a_{r,i}\neq 0$, in which case it is safe to assume that $a_{r,i}= 1$. Now if $i$ is odd (resp. even) then we set $i'=i+1$ and $\vv'=-\vv_{r,i+1}^1$ (resp. $i'=i-1$ and $\vv'=\vv_{r,i-1}^1$) so that $\br{\vv', \ww}=1$. Then we take the set $\fB$ as follows.
\begin{enumerate}[label=$\bullet$, leftmargin=*]
\item $(\vv_{r',s}^1-\br{\vv_{r',s}^1,\ww}\vv', r')$ for 
\begin{enumerate}[label=$-$]
\item $r'\not\in \und{\lambda}_v$ and $s\in [1, 2m(r')]$,
\item $r' \in\und{\lambda}_v-\{r\}$ and $s\in[2,2m(r')]$, or 
\item $r'=r$ and $s\in [2,2m(r)]-\{i, i'\}$
\end{enumerate}
\item $(\vv''-\br{\vv'',\ww}\vv', \Delta(r')+\nabla(\ns(r')))$ where $\vv''=\sum_{r''\in \und{\lambda}_v, r''\geq r'}\vv_{r'',1}^{\Delta(r'')-\Delta(r')+1}$ for $r' \in\und{\lambda}_v$
\item  $(\sum_{r'\geq r, s\in[1,2m(r')]} a_{r',s}\vv^{r'-(r-1)}_{r',s}, r-1)$
\item $(N\vv^1_{r,i'}=\vv^2_{r,i'}, r-1)$
\end{enumerate}
From this it follows that $\hat{\lambda}' = \hat{\lambda}\pch{r,r}{r-1,r-1}$, which implies that $\mu'=\mu\pch{\nabla(r)}{\nabla(r)-1}$ and $\nu'=\nu$. As $\#(H-H')^F = \frac{\qe{2m(\geq r)-1}-\qe{2m(>r)+1}}{q-1}$, we have
$$\sum_{l \in (H-H')^F}\tspq{\mu',\nu'} =   \frac{\qe{2m(\geq r)-1}-\qe{2m(>r)+1}}{q-1}\tspq{\mu\pch{\nabla(r)}{\nabla(r)-1},\nu}.$$
When $m(r)=1$, then $H=H'$, i.e. $H-H'=\emptyset$ which is consistent with the fact that the coefficient of the term above is zero. In such a case we simply ignore this term.

Now we assume that $l \in H'$, in which case we may set $a_{r,1}=1$. Let $j \in \und{\lambda}$ be the largest element such that $\nabla(j)=\nabla(r)$ and $\Delta(j)<\Delta(r')$ for any $r'\in \und{\lambda}$ such that $r'>j$. Also we set $\und{\lambda}\cap[r,j] = \{j_1, j_2, \ldots, j_a,j_{a+1}=r\}$ where $j=j_1>j_2>\cdots>j_a>j_{a+1}=r$. Then we have $\nabla(j_1) = \nabla(j_2)=\cdots=\nabla(j_a) = \nabla(r)$. Note that the map $l \mapsto \ww-\vv_{r,1}$ gives an isomorphism of varieties $H' \simeq \ker_{>r}N$. First suppose that $\ww-\vv_{r,1}^r \in \ker_{>j}N$. If $r=\max \und{\lambda}_v$ then $\ww=\vv_{r,1}^r$ and we take the set $\fB$ as follows. 
\begin{enumerate}[label=$\bullet$, leftmargin=*]
\item $(\vv_{r',s}^1+\br{\vv_{r',s}^1,\ww}\vv_{r,2}^1, r')$ for 
\begin{enumerate}[label=$-$]
\item $r'\not\in \und{\lambda}_v$ and $s\in [1, 2m(r')]$,
\item $r' \in\und{\lambda}_v-\{r\}$ and $s\in[2,2m(r')]$, or 
\item $r'=r$ and $s\in [3,2m(r)]$
\end{enumerate}
\item $(\vv''+\br{\vv'',\ww}\vv_{r,2}^1, \Delta(r')+\nabla(\ns(r')))$ where $\vv''=\sum_{r''\in \und{\lambda}_v, r''\geq r'}\vv_{r'',1}^{\Delta(r'')-\Delta(r')+1}$ for $r' \in\und{\lambda}_v$
\item $(N\vv^1_{r,2}=\vv^2_{r,2}, r-1)$
\end{enumerate}
(Note that $l \subset \kk[N]v$ in this case.) It follows that $\hat{\lambda}' = \hat{\lambda}\pch{r}{r-1}$. On the other hand, if we assume that $r \neq \max \und{\lambda}_v$ then there exists $\varpi \in \und{\lambda}_v$ such that $\ns(\varpi)=r$. Now we take the set $\fB$ as follows. 
\begin{enumerate}[label=$\bullet$, leftmargin=*]
\item $(\vv_{r',s}^1+\br{\vv_{r',s}^1,\ww}\vv_{r,2}^1, r')$ for 
\begin{enumerate}[label=$-$]
\item $r'\not\in \und{\lambda}_v$ and $s\in [1, 2m(r')]$,
\item $r' \in\und{\lambda}_v-\{r\}$ and $s\in[2,2m(r')]$, or 
\item $r'=r$ and $s\in [3,2m(r)]$
\end{enumerate}
\item $(\vv''+\br{\vv'',\ww}\vv_{r,2}^1, \Delta(r')+\nabla(\ns(r')))$ where $\vv''=\sum_{r''\in \und{\lambda}_v, r''\geq r'}\vv_{r'',1}^{\Delta(r'')-\Delta(r')+1}$ for $r' \in\und{\lambda}_v-\{\varpi\}$
\item $(\vv'''+\br{\vv''',\ww}\vv_{r,2}^1, \Delta(\varpi)+\nabla(r)-1)$ where 
$$\vv''' = \sum_{r'\in \und{\lambda}_v, r'\geq \varpi}\vv_{r',1}^{\Delta(r')-\Delta(\varpi)+1}-\sum_{r'> r, s\in[1, 2m(r')]} a_{r',s}\vv^{r'-(\Delta(\varpi)+\nabla(r)-1)}_{r',s}$$
\item $(N\vv^1_{r,2}=\vv^2_{r,2}, r-1)$
\end{enumerate}
It follows that $\hat{\lambda}' = \hat{\lambda}\pch{r, \Delta(\varpi)+\nabla(\ns(\varpi))}{r-1, \Delta(\varpi)+\nabla(\ns(\varpi))-1}$ as $\ns(\varpi) = r$, which implies that $\mu'=\mu^{-[m(>j)+1, M(\mu)]}$ and $\nu'=(\nu^{+[m(>j)+1,M(\mu)]})\pch{\Delta(r)+1}{\Delta(r)} = \nu^{+[m(>j)+1,M(\nu)-1]}$
where $M(\mu)=m_\mu(\mathord{\geq\nabla(r)})$.
As $\#(\ker_{>j}N)^F=\qe{2m(>j)}$, it follows that
$$\sum_{\ww-\vv_{r,1}^r \in (\ker_{>j}N)^F}\tspq{\mu',\nu'} =  \qe{2m(>j)}\tspq{\mu^{-[m(>j)+1, M(\mu)]},\nu^{+[m(>j)+1,M(\mu)-1]}}.$$

This time, suppose that $\ww-\vv^r_{r,1} \in \ker_{> j_{b+1}}N-\ker_{>j_b}N$ for some $b \in [1,a]$. Let $i \in [1, 2m(j_b)]$ be the largest element satisfying $a_{j_b,i}\neq 0$. If $r=\max \und{\lambda}_v$ then we take the set $\fB$ as follows. 
\begin{enumerate}[label=$\bullet$, leftmargin=*]
\item $(\vv_{r',s}^1+\br{\vv_{r',s}^1,\ww}\vv_{r,2}^1, r')$ for 
\begin{enumerate}[label=$-$]
\item $r'\not\in \und{\lambda}_v\cup\{j_b\}$ and $s\in [1, 2m(r')]$,
\item $r' \in\und{\lambda}_v-\{r\}$ and $s\in[2,2m(r')]$, 
\item $r' =j_b$ and $s\in[1,2m(r')]-\{i\}$, or 
\item $r'=r$ and $s\in [3,2m(r)]$
\end{enumerate}
\item $(\vv''+\br{\vv'',\ww}\vv_{r,2}^1, \Delta(r')+\nabla(\ns(r')))$ where $\vv''=\sum_{r''\in \und{\lambda}_v, r''\geq r'}\vv_{r'',1}^{\Delta(r'')-\Delta(r')+1}$ for $r' \in\und{\lambda}_v$
\item $(\sum_{r'\geq j_b, s\in[1, 2m(r')]} a_{r',s}\vv^{r'-j_b+1}_{r',s}, j_b-1)$
\item $(N\vv^1_{r,2}=\vv^2_{r,2}, r-1)$
\end{enumerate}
On the other hand, if $r \neq \max \und{\lambda}_v$ then we take the set $\fB$ as follows. 
\begin{enumerate}[label=$\bullet$, leftmargin=*]
\item $(\vv_{r',s}^1+\br{\vv_{r',s}^1,\ww}\vv_{r,2}^1, r')$ for 
\begin{enumerate}[label=$-$]
\item $r'\not\in \und{\lambda}_v\cup\{j_b\}$ and $s\in [1, 2m(r')]$,
\item $r' \in\und{\lambda}_v-\{r\}$ and $s\in[2,2m(r')]$, 
\item $r' =j_b$ and $s\in[1,2m(r')]-\{i\}$, or 
\item $r'=r$ and $s\in [3,2m(r)]$
\end{enumerate}
\item $(\vv''+\br{\vv'',\ww}\vv_{r,2}^1, \Delta(r')+\nabla(\ns(r')))$ where $\vv''=\sum_{r''\in \und{\lambda}_v, r''\geq r'}\vv_{r'',1}^{\Delta(r'')-\Delta(r')+1}$ for $r' \in\und{\lambda}_v$
\item $(\vv'''+\br{\vv''',\ww}\vv_{r,2}^1, j_b-1)$ where 
$$\vv''' = \sum_{r'\in \und{\lambda}_v, r'>r}\vv_{r',1}^{\Delta(r')+\nabla(r)-j_b+1}-\sum_{r'\geq j_b, s\in[1,2m(r')]} a_{r',s}\vv^{r'-j_b+1}_{r',s}$$
\item $(N\vv^1_{r,2}=\vv^2_{r,2}, r-1)$
\end{enumerate}
In any case, we have $\hat{\lambda}' = \hat{\lambda}\pch{r,j_b}{r-1,j_b-1}$. Thus direct calculation shows that $\mu'=\mu^{-[m(\geq j_b)+1, M(\mu)]}$ and $\nu'=(\nu^{+[m(\geq j_b)+1,M(\mu)]})\pch{\Delta(r)+1}{\Delta(r)} = \nu^{+[m(>j)+1,M(\mu)-1]}$
where $M(\mu)=m_\mu(\mathord{\geq \nabla(r)})$.
As $\#(\ker_{> j_{b+1}}N-\ker_{>j_b}N)^F=\qe{2m(\geq j_b)}-\qe{2m(> j_b)}$, it follows that
\begin{align*}
&\sum_{\ww-\vv_{r,1}^r \in (\ker_{> j_{b+1}}N-\ker_{>j_b}N)^F}\tspq{\mu',\nu'} 
\\&=(\qe{2m(\geq j_b)}-\qe{2m(> j_b)})\tspq{\mu^{-[m(\geq j_b)+1, M(\mu)]},\nu^{+[m(\geq j_b)+1,M(\mu)-1]}}.
\end{align*}

\section{Main theorem}
We summarize the results in the previous section and conclude our second main theorem. First, we recall some notations; see \ref{sec:weyl} for $W$ and $W'$; see \ref{sec:partition} for $\mu\pch{a_1,a_2,\ldots}{b_1, b_2, \ldots}$, $m(r)$, $m_{\geq r}$, etc.; see \ref{sec:misc} for $\qe{-}$; see \ref{sec:parexo} for $\fP_2$; see \ref{sec:standardexo} for  $\und{\lambda}_v$; see \ref{sec:greenexo} for $\tspq{\mu, \nu}$; see Section \ref{sec:calcexo} for $\Delta$ and $\nabla$.
\begin{thm}[Main theorem for exotic Springer representations] \label{thm:mainexo} For $(\mu, \nu) \in \fP_2$, let $\lambda=\mu+\nu$. Then the character $\Res^W_{W'} \tspq{\mu, \nu}$ is equal to
\begin{align*}
&\sum_{r \not \in\und{\lambda}_v}\frac{\qe{2m_{\geq r}}-\qe{2m_{>r}}}{q-1} \tspq{\mu,\nu\pch{\Delta(r)}{\Delta(r)-1}}\allowdisplaybreaks
\\+&\sum_{\substack{r \in \und{\lambda}_v\\\textup{case 2}}} \left[
\begin{aligned}
&\qe{2m_{\geq r}-1}\tspq{\mu^{+[m(\geq r)+1,M(\nu) ]},\nu^{-[m(\geq r),M(\nu) ]}}
\\&+\frac{\qe{2m_{\geq r}-1}-\qe{2m_{>r}+1}}{q-1}\tspq{\mu\pch{\nabla(r)}{\nabla(r)-1},\nu}
\\&+\qe{2m_{> r}}\tspq{\mu^{-[m(>r)+1, M(\mu)]},\nu^{+[m(>r)+1, M(\mu)-1]}}
\end{aligned}
\right]\allowdisplaybreaks
\\+&\sum_{\substack{r \in \und{\lambda}_v\\\textup{case 3}}} \left[
\begin{aligned}
&\left(\qe{2m_{\geq r}-1}-\qe{2m_{>r}-1}\right)\tspq{\mu^{+[m(\geq r)+1,M(\nu)]},\nu^{-[m(\geq r),M(\nu)]}}
\\&+\frac{\qe{2m_{\geq r}-1}-\qe{2m_{>r}-1}}{q-1}\tspq{\mu\pch{\nabla(r)}{\nabla(r)-1},\nu}
\end{aligned}
\right]\allowdisplaybreaks
\\+&\sum_{\substack{r \in \und{\lambda}_v\\\textup{case 4}}} \left[
\begin{aligned}
&\qe{2m_{\geq r}-1}\tspq{\mu^{+[m(\geq r)+1,M(\nu) ]},\nu^{-[m(\geq r),M(\nu) ]}}
\\&+ \frac{\qe{2m_{\geq r}-1}-\qe{2m_{>r}+1}}{q-1}\tspq{\mu\pch{\nabla(r)}{\nabla(r)-1},\nu}
\\&+\qe{2m_{>j}}\tspq{\mu^{-[m(>j)+1, M(\mu)]},\nu^{+[m(>j)+1,M(\mu)-1]}}
\\&+\sum_{k\in \und{\lambda}, r<k\leq j} \left(\qe{2m_{\geq k}}-\qe{2m_{> k}}\right)\tspq{\mu^{-[m(\geq k)+1, M(\mu)]},\nu^{+[m(\geq k)+1,M(\mu)-1]}}
\end{aligned}
\right].
\end{align*}
Here $m_{\geq r}$, $m(\geq r)$, etc. are defined with respect to $\lambda$. Also we set $M(\nu) = m_\nu(\geq \Delta(r))$ and $M(\mu) = m_\mu(\geq \nabla(r))$. We say
\begin{enumerate}[label=$-$]
\item \textnormal{case 2} if $\nabla(r')>\nabla(r), \Delta(r')>\Delta(r)$ for any $r' \in \und{\lambda}$ such that $r'>r$,
\item \textnormal{case 3} if there exists $r' \in \und{\lambda}$ such that $r'>r$ and $\Delta(r')=\Delta(r)$, and
\item \textnormal{case 4} if there exists $r' \in \und{\lambda}$ such that $r'>r$ and $\nabla(r')=\nabla(r)$.
\end{enumerate}
In \textnormal{case 4}, we set $j \in \und{\lambda}$ to be the largest value such that $\nabla(j)=\nabla(r)$ and $\Delta(r')\neq \Delta(j)$ for any $r' \in \und{\lambda}$ such that $r'>j$. (Such $j$ always exists and may equal $r$.)
We ignore terms whose coefficients are zero or when $\tspq{-,-}$ is not well-defined.
\end{thm}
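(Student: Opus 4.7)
The plan is to derive the theorem directly from Proposition \ref{prop:resexo} by stratifying the projective space $\bP(\ker N)$ according to the kernel filtration established in \ref{sec:standardexo}, and then invoking the case-by-case calculations of Section \ref{sec:calcexo}. More precisely, since $\bP(\ker N) = \bigsqcup_{r \in \und{\lambda}} \left(\bP(\ker_{\geq r}N) - \bP(\ker_{> r}N)\right)$ as a stratification defined over $\Fq$, taking $F$-fixed points gives the corresponding disjoint union of $(\bP(\ker_{\geq r}N) - \bP(\ker_{> r}N))^F$. So I would rewrite
\begin{equation*}
\Res_{W'}^W \tspq{\mu, \nu} = \sum_{r \in \und{\lambda}} \sum_{l \in (\bP(\ker_{\geq r}N) - \bP(\ker_{> r}N))^F} \tspq{\mu(l), \nu(l)}.
\end{equation*}

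Next, for each fixed $r \in \und{\lambda}$, I would distinguish the four cases listed in Section \ref{sec:calcexo}: namely (1) $r \not\in \und{\lambda}_v$; (2) $r \in \und{\lambda}_v$ with $\nabla(r') > \nabla(r)$ and $\Delta(r') > \Delta(r)$ for all $r' > r$ in $\und{\lambda}$; (3) $r \in \und{\lambda}_v$ with some $r' > r$ satisfying $\Delta(r') = \Delta(r)$; and (4) $r \in \und{\lambda}_v$ with some $r' > r$ satisfying $\nabla(r') = \nabla(r)$. In each case the analysis in Section \ref{sec:calcexo} gives an explicit subdivision of $\bP(\ker_{\geq r}N) - \bP(\ker_{> r}N)$ into locally closed subvarieties (defined by vanishing or non-vanishing of certain coordinates $a_{r,s}$, or by the stratum the shift $\ww - \vv_{r,2}^r$ lives in) on each of which the parameter $(\mu', \nu')$ is constant, together with the explicit counts of $F$-fixed points. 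Substituting these in directly produces the four summands of the theorem.

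The routine part is matching the point-count factors $\qe{2m_{\geq r}-1}$, $\frac{\qe{2m_{\geq r}-1} - \qe{2m_{>r}+1}}{q-1}$, etc. coming from $\#(\bP(\ker_{\geq r}N) - H)^F$, $\#(H - H')^F$, and $\#(H' - \bP(\ker_{>r}N))^F$ with the $(\mu',\nu')$ computed via the explicit Jordan bases $\fB$ constructed case-by-case in \ref{sec:case1exo}--\ref{sec:case4exo}. The main obstacle, and the place that demands genuine care, is case 4 (\ref{sec:case4exo}): there one must further decompose $H'$ according to which stratum $\ker_{> j_{b+1}}N - \ker_{>j_b}N$ the element $\ww - \vv_{r,1}^r$ belongs to, where $j=j_1>\cdots>j_{a+1}=r$ are the elements of $\und{\lambda} \cap [r,j]$. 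This gives rise to the final $\sum_{k \in \und{\lambda}, r<k\leq j}$ summation in case 4; one must check that the definition of $j$ in the theorem statement (the largest value with $\nabla(j)=\nabla(r)$ and $\Delta(r')\neq \Delta(j)$ for all $r'>j$) is exactly what guarantees both that $\ns(\varpi) = r$ for the relevant $\varpi \in \und{\lambda}_v$ and that the $\qe{2m_{>j}}$ term absorbs all further contributions from $\ww - \vv_{r,1}^r \in \ker_{>j}N$.

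Finally, I would verify that terms with vanishing coefficients correspond exactly to the degenerate situations flagged in Section \ref{sec:calcexo} (e.g.\ $m(r) = 1$ making $H = H'$, or $\Delta(r) = 0$ forcing $v \not\in l^\perp$ so that $\cB_{N', v'} = \emptyset$), so that the convention "ignore terms whose coefficients are zero or when $\tspq{-,-}$ is not well-defined" is consistent with the geometric stratification. Assembling the four group of summands indexed by case 1 through case 4 then yields the displayed formula.
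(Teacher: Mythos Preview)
Your proposal is correct and follows essentially the same approach as the paper: the theorem is stated explicitly as a summary of the case-by-case calculations in Section \ref{sec:calcexo}, obtained by stratifying $\bP(\ker N)$ via the kernel filtration and applying Proposition \ref{prop:resexo}. You have correctly identified both the routine point-count matching and the one genuinely delicate piece (the further stratification of $H'$ in case 4 by the strata $\ker_{>j_{b+1}}N - \ker_{>j_b}N$), as well as the degenerate situations handled by the convention on zero coefficients.
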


If we evaluate the above theorem at $q=1$, then we have an ungraded version which is an analogue of \cite[Theorem 5.3]{kim:euler} for exotic Springer representations.

\begin{cor} Let $\tsp{1}{-,-}\colonequals \tspq(-,-)|_{q=1}$. For $(\mu, \nu) \in \fP_2$, let $\lambda=\mu+\nu$. Then the character $\Res^W_{W'} \tsp{1}{\mu, \nu}$ is equal to
\begin{align*}
&\sum_{r \not \in\und{\lambda}_v}2m_r \tsp{1}{\mu,\nu\pch{\Delta(r)}{\Delta(r)-1}}\allowdisplaybreaks
\\+&\sum_{\substack{r \in \und{\lambda}_v\\\textup{case 2}}} \left[
\begin{aligned}
&\tsp{1}{\mu^{+[m(\geq r)+1,M(\nu) ]},\nu^{-[m(\geq r),M(\nu) ]}}
\\&+(2m_r-2)\tsp{1}{\mu\pch{\nabla(r)}{\nabla(r)-1},\nu} +\tsp{1}{\mu^{-[m(>r)+1, M(\mu)]},\nu^{+[m(>r)+1, M(\mu)-1]}}
\end{aligned}
\right]\allowdisplaybreaks
\\+&\sum_{\substack{r \in \und{\lambda}_v\\\textup{case 3}}} 
2m_r\tsp{1}{\mu\pch{\nabla(r)}{\nabla(r)-1},\nu}
\allowdisplaybreaks
\\+&\sum_{\substack{r \in \und{\lambda}_v\\\textup{case 4}}} \left[
\begin{aligned}
&\tsp{1}{\mu^{+[m(\geq r)+1,M(\nu) ]},\nu^{-[m(\geq r),M(\nu) ]}}
\\&+(2m_r-2)\tsp{1}{\mu\pch{\nabla(r)}{\nabla(r)-1},\nu}+\tsp{1}{\mu^{-[m(>j)+1, M(\mu)]},\nu^{+[m(>j)+1,M(\mu)-1]}}
\end{aligned}
\right].
\end{align*}
Here $m_{ r}$, $m( r)$, etc. are defined with respect to $\lambda$. Also we set $M(\nu) = m_\nu(\geq \Delta(r))$ and $M(\mu) = m_\mu(\geq \nabla(r))$. We say
\begin{enumerate}[label=$-$]
\item \textnormal{case 2} if $\nabla(r')>\nabla(r), \Delta(r')>\Delta(r)$ for any $r' \in \und{\lambda}$ such that $r'>r$,
\item \textnormal{case 3} if there exists $r' \in \und{\lambda}$ such that $r'>r$ and $\Delta(r')=\Delta(r)$, and
\item \textnormal{case 4} if there exists $r' \in \und{\lambda}$ such that $r'>r$ and $\nabla(r')=\nabla(r)$.
\end{enumerate}
In \textnormal{case 4}, we set $j \in \und{\lambda}$ to be the largest value such that $\nabla(j)=\nabla(r)$ and $\Delta(r')\neq \Delta(j)$ for any $r' \in \und{\lambda}$ such that $r'>j$. (Such $j$ always exists and may equal $r$.)
We ignore terms whose coefficients are zero or when $\tspq{-,-}$ is not well-defined.
\end{cor}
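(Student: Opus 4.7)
The plan is to obtain the corollary by specializing Theorem \ref{thm:mainexo} at $q=1$. Since $\Res^W_{W'}$ is a $\bZ$-linear operation on characters and the virtual character $\tspq{\mu,\nu}$ depends polynomially on $q$ through its definition via $\sum_i (-1)^i\tr(wF^*,H^i(\cB_{N,v}))$, specializing both sides of the identity in Theorem \ref{thm:mainexo} at $q=1$ is legitimate and yields a valid identity of characters of $W'$. It therefore suffices to evaluate each coefficient on the right-hand side of Theorem \ref{thm:mainexo} at $q=1$ and check that the result agrees with the stated formula.

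The required specializations of coefficients are routine. Using $\qe{t}=q^t$, one has $\qe{t}\big|_{q=1}=1$ for every $t$, and for any $a,b$,
\begin{equation*}
\left.\frac{\qe{a}-\qe{b}}{q-1}\right|_{q=1}=\lim_{q\to 1}\frac{q^a-q^b}{q-1}=a-b
\end{equation*}
by L'Hôpital (or by direct expansion of $q^a-q^b=(q-1)(q^{a-1}+\cdots+q^b)$ when $a\geq b$). Applying these two rules term by term, we get at $q=1$ the following specializations of the coefficients appearing in Theorem \ref{thm:mainexo}:
\begin{equation*}
\frac{\qe{2m_{\geq r}}-\qe{2m_{>r}}}{q-1}\rightsquigarrow 2m_r,\qquad \qe{2m_{\geq r}-1}\rightsquigarrow 1,\qquad \frac{\qe{2m_{\geq r}-1}-\qe{2m_{>r}+1}}{q-1}\rightsquigarrow 2m_r-2,
\end{equation*}
\begin{equation*}
\qe{2m_{>r}}\rightsquigarrow 1,\qquad \qe{2m_{\geq r}-1}-\qe{2m_{>r}-1}\rightsquigarrow 0,\qquad \frac{\qe{2m_{\geq r}-1}-\qe{2m_{>r}-1}}{q-1}\rightsquigarrow 2m_r,
\end{equation*}
and $\qe{2m_{\geq k}}-\qe{2m_{>k}}\rightsquigarrow 0$ for each $k$ appearing in the case-4 sum. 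This last vanishing causes the entire $k$-sum in case 4 to drop out of the specialized formula, and likewise the first term in case 3 disappears (its coefficient specializes to $0$), while the second term in case 3 contributes exactly $2m_r\tsp{1}{\mu\pch{\nabla(r)}{\nabla(r)-1},\nu}$.

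Assembling the non-vanishing contributions yields the four stated summations in the corollary: the case $r\not\in\und{\lambda}_v$ produces $2m_r$ copies of $\tsp{1}{\mu,\nu\pch{\Delta(r)}{\Delta(r)-1}}$; case 2 gives three terms with coefficients $1,\ 2m_r-2,\ 1$; case 3 collapses to the single clean term above; and case 4 gives three terms with coefficients $1,\ 2m_r-2,\ 1$ (the $k$-sum having vanished). No obstacle is anticipated beyond careful bookkeeping of the $q=1$ specializations and verifying that the conventions about ``ignored'' zero-coefficient terms and ill-defined $\tspq{-,-}$ carry over unchanged to the ungraded side; both are preserved under specialization, since a polynomial coefficient that vanishes for all $q$ stays zero at $q=1$, and the parameters $(\mu',\nu')$ attached to each term are independent of $q$.
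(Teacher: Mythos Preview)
Your proposal is correct and is exactly the paper's approach: the corollary is obtained by specializing Theorem \ref{thm:mainexo} at $q=1$, and your term-by-term evaluation of the coefficients (including the vanishing of the $\qe{2m_{\geq r}-1}-\qe{2m_{>r}-1}$ and $\qe{2m_{\geq k}}-\qe{2m_{>k}}$ terms) is accurate. The paper gives no further argument beyond the one-line remark ``If we evaluate the above theorem at $q=1$\ldots'', so your write-up is in fact more detailed than the original.
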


\part{Further remarks}
\section{Equivalence of two main theorems} \label{sec:equiv}
In \cite{kat11}, Kato compared two Springer theories discussed in the previous parts, namely his exotic version and the one corresponding to $\Lie Sp_{2n} (\Ftbar)$, and showed that there is an equivalence of these two. His argument is based on some deformation of the exotic nilpotent cone. Here, in the same spirit, we show that our two main theorems are equivalent. Namely, we prove that there exists a bijection $\iota: \Omega \rightarrow \fP_2$ such that the total Springer representations of $(\lambda, \chi) \in \Omega$ and $\iota(\lambda, \chi)$ coincide, and under this bijection Theorem \ref{thm:main} and \ref{thm:mainexo} are equivalent. Note that the equality of these total Springer representations is already known to experts; for example see the closing remark of \cite{ahs11}.

\subsection{Bijection between $\Omega$ and $\fP_2$}
First we define a bijection $\iota : \Omega\rightarrow \fP_2$. (This bijection is deduced from, but not exactly the same as, the one defined in \cite[8.1]{xue12:comb}.)  For $(\lambda, \chi) \in \Omega$, choose $s \in \bN$ such that $2s\geq l(\lambda)$. 
We partition $[1, 2s+1]$ into blocks of size 1 or 2 such that:
\begin{enumerate}[label=$-$]
\item $\{i\}$ is a single block if and only if $\chi(\lambda_i)=\lambda_i/2 $ and
\item other blocks consist of two consecutive integers.
\end{enumerate}
Note that if $\{i, i+1\}$ is a block then $\lambda_i=\lambda_{i+1}$ (and thus $\chi(\lambda_i)=\chi(\lambda_{i+1})$). Now we set $c_i$ for $i \in [1, 2s+1]$ to be
\begin{enumerate}[label=$-$]
\item if $\{i\}$ is a single block then $c_i = \lambda_i/2$, and
\item otherwise if $\{i, i+1\}$ is a block then $c_i = \chi(\lambda_i)$ and $c_{i+1} =\lambda_i - \chi(\lambda_i)$.
\end{enumerate}
Now we set $\mu=(c_1, c_3, \ldots, c_{2s+1})$ and $\nu=(c_2, c_4, \ldots, c_{2s})$ (and remove zeroes at the end if necessary so that $\mu$ and $\nu$ do not depend on the choice of $s$). Then, from the definition of $\Omega$ it follows that $\mu$ and $\nu$ are partitions and $|\mu|+|\nu|=n$. Now we define $\iota(\lambda, \chi) = (\mu, \nu)$. 

We claim that $\iota$ is a bijection from $\Omega$ to $\fP_2$ with its inverse given as follows. (This is the same as the one defined in \cite[2.6]{xue12}.) For $(\mu, \nu) \in \fP_2$, we set
$$\lambda_1 = \left\{
\begin{aligned}
&\mu_1+\nu_1 && \textup{ if } \mu_1<\nu_1
\\&2\mu_1 && \textup{ if } \mu_1\geq \nu_1
\end{aligned}
\right. \qquad
\chi(\lambda_1)= \mu_1$$
and for $i \geq 1$ we set
\begin{align*}
\lambda_{2i} = &\left\{
\begin{aligned}
&\mu_{i+1}+\nu_i && \textup{ if } \nu_i<\mu_{i+1}
\\&\mu_{i}+\nu_i && \textup{ if }  \nu_i>\mu_{i}
\\&2\nu_i && \textup{ if } \mu_{i+1} \leq \nu_i \leq \mu_i
\end{aligned}
\right.
&\chi(\lambda_{2i})= &\left\{
\begin{aligned}
&\mu_i&& \textup{ if } \nu_i>\mu_{i}
\\&\nu_i && \textup{ if }  \nu_i\leq \mu_{i}
\end{aligned}\right.
\\\lambda_{2i+1} = &\left\{
\begin{aligned}
&\mu_{i+1}+\nu_i && \textup{ if } \mu_{i+1}>\nu_i
\\&\mu_{i+1}+\nu_{i+1} && \textup{ if }  \mu_{i+1}<\nu_{i+1}
\\&2\mu_{i+1} && \textup{ if } \nu_{i+1} \leq \mu_{i+1} \leq \nu_i
\end{aligned}
\right.
&\chi(\lambda_{2i+1})= &\left\{
\begin{aligned}
&\mu_{i+1}&& \textup{ if } \mu_{i+1}\leq\nu_i
\\&\nu_{i+1} && \textup{ if }  \mu_{i+1}>\nu_i
\end{aligned}\right.
\end{align*}
Then $\iota^{-1}(\mu, \nu) = (\lambda, \chi)$ where $\lambda$ and $\chi$ are described as above.

\begin{lem} The two maps defined above, namely $\iota$ and $\iota^{-1}$, are inverses to each other. In particular, $\iota$ is a bijection from $\Omega$ to $\fP_2$.
\end{lem}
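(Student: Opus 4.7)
The plan is to verify by direct case analysis that $\iota$ and $\iota^{-1}$ are mutually inverse. My strategy is to interpret both maps through the common intermediate data of the interleaved sequence $(c_1, c_2, \ldots, c_{2s+1})$ together with a block decomposition of $[1, 2s+1]$ into singletons and consecutive pairs.

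I would first confirm that $\iota$ takes $\Omega$ into $\fP_2$. The block decomposition on the $(\lambda,\chi)$-side is unambiguous: if $\chi(r) \neq r/2$ for some $r \in \und{\lambda}$, then condition (2) of $\Omega$ forces $m_\lambda(r)$ to be even, so the positions in $[1,2s+1]$ where $\chi(\lambda_i) \neq \lambda_i/2$ group into an even-length run for each such $r$ and can be paired consecutively. That $\mu$ and $\nu$ are partitions reduces to the monotonicity of $\chi(r)$ and $r - \chi(r)$ in condition (3), checked at every block boundary; and $|\mu|+|\nu| = \sum_i c_i = |\lambda|/2 = n$ follows by counting singletons versus pairs. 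Symmetrically, I would check that $\iota^{-1}$ lands in $\Omega$ by verifying the three defining conditions case by case. Conditions (2) and (3) become straightforward inequalities among the $\mu_i, \nu_i$, while condition (1) follows from the observation that an odd part $r$ of $\lambda$ must arise from a double block (i.e.\ as $\mu_{i+1}+\nu_i$ or $\mu_i+\nu_i$), and such terms always come in matched pairs from the formulas for $\lambda_{2i-1},\lambda_{2i}$ or $\lambda_{2i},\lambda_{2i+1}$.

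The crucial step is the identity $\iota \circ \iota^{-1} = \mathrm{id}_{\fP_2}$ (the other direction is symmetric). Starting from $(\mu,\nu)\in \fP_2$, the interleaved sequence $(c_1,c_2,\ldots) = (\mu_1,\nu_1,\mu_2,\nu_2,\ldots)$ is fixed, and the case analysis in the definition of $\iota^{-1}$ implicitly reconstructs the block decomposition from comparisons among adjacent $c_i$'s. For example, the case $\mu_{i+1} \leq \nu_i \leq \mu_i$ in the formula for $\lambda_{2i}$ translates to $c_{2i+1} \leq c_{2i} \leq c_{2i-1}$ and makes position $2i$ a singleton block with $c_{2i} = \lambda_{2i}/2$, whereas $\nu_i < \mu_{i+1}$ translates to $c_{2i} < c_{2i+1}$ and makes $\{2i,2i+1\}$ a double block with $c_{2i} = \chi(\lambda_{2i})$ and $c_{2i+1} = \lambda_{2i}-\chi(\lambda_{2i})$. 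Once these correspondences are established, applying $\iota$ to the resulting $(\lambda,\chi)$ simply reads the same $c_i$'s off the blocks and recovers $(\mu,\nu)$ by unshuffling.

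The main obstacle is the combinatorial bookkeeping of overlapping boundary cases in the $\iota^{-1}$ formula, such as $\nu_i = \mu_i$ or $\nu_i = \mu_{i+1}$, which are covered simultaneously by multiple clauses. In each such tie the alternative expressions for $\lambda_{2i}, \lambda_{2i+1}$ and their $\chi$-values numerically coincide, so consistency holds; still, the argument requires checking these collisions once and for all and confirming that the block decomposition induced from $(\mu,\nu)$ via the case conditions is the unique one compatible with the block decomposition used to define $\iota$ on the image.
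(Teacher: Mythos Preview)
Your proposal is a correct and self-contained direct verification: you identify the shared combinatorial backbone (the interleaved $c$-sequence with its singleton/pair block decomposition) and check that each map reconstructs it from the other's data, handling the boundary ties carefully. This is the natural elementary argument and it goes through.

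The paper, by contrast, does not carry out any of this. Its proof is a single sentence citing \cite[8.1, Lemma]{xue12:comb} and omitting the details. So your approach is genuinely different in presentation: you supply the actual case analysis, whereas the paper defers entirely to Xue's lemma (which establishes the same bijection in slightly different packaging). Your version is more self-contained and makes the mechanism transparent; the paper's version is shorter but relies on the reader chasing the reference.
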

\begin{proof} It essentially follows from \cite[8.1, Lemma]{xue12:comb}. We omit the details.
\end{proof}

\subsection{Shoji's limit symbol}
It is theoretically important to relate $\fP_2$, $\Omega$ with a well-known combinatorial object called symbols. After firstly defined by Lusztig, it has been playing a crucial role in representation theory of Weyl groups, finite groups of Lie type, etc. Here, instead of explaining the general theory of symbols, we simply focus on the version directly linked to our setting. These are called limit symbols which are extensively studied by Shoji \cite{sho04}.

Let us fix integers $r, s$ such that $r\geq s+n\geq 2n$. For an integer $m \in \bN$, we set $\zeta_m=(mr, (m-1)r, \ldots, 2r, r)$ and $\eta_m=(s+(m-1)r, s+(m-2)r, \ldots, s+2r, s+r,s)$. We set $Z^{r,s}(m) \colonequals\{(\zeta_m+\mu, \eta_m+\nu)\mid (\mu, \nu)\in \fP_2, l(\mu)\leq m+1, l(\nu)\leq m\}$. Let us define an equivalence relation $\approx$ on $\bigsqcup_{m \in \bN} Z^{r,s}(m)$, where $(\alpha, \beta) \approx (\alpha', \beta')$ for $(\alpha, \beta) \in Z^{r,s}(m)$ and $(\alpha', \beta') \in Z^{r,s}(m')$ if there exists $(\mu, \nu) \in \fP_2$ such that $(\zeta_m+\mu, \eta_m+\nu)=(\alpha,\beta)$ and $(\zeta_{m'}+\mu, \eta_{m'}+\nu)=(\alpha',\beta')$. We set $\bar{Z}^{r,s}$ to be the set of equivalence classes in $\bigsqcup_{m \in \bN} Z^{r,s}(m)$. It is clear that there exists a canonical bijection $\tau: \fP_2 \simeq \bar{Z}^{r,s}$, which also induces another bijection $\tau\circ\iota: \Omega \simeq \bar{Z}^{r,s}$. The elements of $\bar{Z}^{r,s}$ are called limit symbols.

\begin{rmk} The bijection defined in \cite[8.1]{xue12:comb} coincides with $\tau\circ\iota$ when $r=2n+2$ and $s=n+1$. Note that the parameters $r$ and $s$ therein are equal to $r-s$ and $s$ in our setting, respectively.
\end{rmk}

\subsection{Equality of total Springer representations}

There is an algorithm established by Shoji \cite{sho83} and Lusztig \cite{lus86}, now commonly called the Lusztig-Shoji algorithm,  originally invented to calculate the (generalized) Green functions of reductive groups in good characteristic. Later it is generalized \cite{sho01, sho02, sho04} so that it attaches Green functions to any symbol associated to complex reflection groups of the form $G(r, p, n)$. Here we only focus on limit symbols associated to $W$ and their Green functions. 

For any limit symbol $X \in \bar{Z}^{r,s}$ and an indeterminate $x$, we set $\tspx{X}$ to be the $\bC(x)$-valued character of $W$ such that $\tspx{X}(w)$ is the Green function defined in \cite[Section 5]{sho01} (see also \cite[Section 3]{sho04}). We abuse terminology and also call it the total Springer representation corresponding to the symbol $X$. Then it does not depend on the choice of $r$ and $s$ provided that $r\geq s+n\geq 2n$, and it follows from \cite[Proposition 3.3]{sho04} that in fact we have $\im \tspx{X} \subset \bQ[x]$, i.e. $\tspx{X}$ is a $\bQ[x]$-valued character of $W$. Also, we have the following equality of total Springer representations. (This theorem is essentially equivalent to \cite[Conjecture 6.4]{ah08}.)
\begin{thm} For any $(\lambda, \chi) \in \Omega$, we have $\tspq{\lambda, \chi}=\tspx{\tau\circ\iota(\lambda, \chi)}|_{x=q}$. In particular, $\tspq{\lambda, \chi}$ is a ``polynomial in $q$''.
\end{thm}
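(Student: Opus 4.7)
The plan is to combine the equivalence of restriction formulas proved earlier in this paper (Section \ref{sec:equiv}) with previously known identifications, via a three-step strategy.

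First, I would identify the exotic Springer character with the Lusztig-Shoji Green function on limit symbols: $\tspq{\mu, \nu} = \tspx{\tau(\mu, \nu)}|_{x=q}$ for every $(\mu, \nu) \in \fP_2$. This is essentially \cite[Conjecture 6.4]{ah08}, and I would assemble it from two ingredients: the affine paving of exotic Springer fibers by \cite{mau17}, which forces $\tspq{\mu, \nu}$ to be an honest polynomial in $q$ via purity of cohomology, together with Shoji's combinatorial matching \cite{sho04} between Kato's exotic Springer correspondence \cite{kat09} and the Lusztig-Shoji algorithm applied to limit symbols.

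Next, I would transfer this identification to the symplectic characteristic-2 setting via the bijection $\iota$. The cleanest route is to invoke the closing remark of \cite{ahs11} (conceptually a shadow of Kato's deformation argument \cite{kat11}), which yields $\tspq{\lambda, \chi} = \tspq{\iota(\lambda, \chi)}$ as characters of $W$. Chaining with the first step produces the desired identity, and the polynomiality of $\tspq{\lambda, \chi}$ follows from polynomiality of $\tspx{X}$.

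Alternatively, one could give an intrinsic inductive proof using only the machinery developed in this paper. Induct on $n$, with trivial base case $n = 0$. For the inductive step, the equivalence of Theorems \ref{thm:main} and \ref{thm:mainexo} under $\iota$, combined with the analogous branching-type recursion for Lusztig-Shoji Green functions of limit symbols, would show that $\Res^W_{W'}\tspq{\lambda, \chi}$ and $\Res^W_{W'}\bigl(\tspx{\tau\circ\iota(\lambda,\chi)}|_{x=q}\bigr)$ coincide. The hard part will be upgrading this equality of restrictions to an equality on all of $W$, since restriction from $W$ to the maximal parabolic $W'$ is not injective on the character ring of $W$ (for instance, in type $BC_2$ the irreducibles indexed by $((2), \emptyset)$ and $((1^2), \emptyset)$ restrict identically to $W_{BC_1}$). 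To resolve this I would match one further invariant: either the Springer ``leading term''---the top-degree irreducible constituent equals $\chi_{\tau\circ\iota(\lambda,\chi)}$ on both sides, by the known compatibility of the symplectic and exotic Springer correspondences---or the unitriangularity $\tspx{X} = \chi_X + (\textup{lower order})$ satisfied by Lusztig-Shoji Green functions, transferred to the characteristic-2 side by the matched restriction formula.
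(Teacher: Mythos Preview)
The paper's proof is a one-line citation: ``This follows from \cite{ss14} or \cite{kat17}.'' Your main route (identify the exotic total Springer representation with the Lusztig--Shoji Green function on limit symbols, then transfer to the characteristic-$2$ symplectic side via the known equivalence of the two Springer theories) is a legitimate unpacking of that same circle of results, and is essentially what those citations encode. So the first two steps of your proposal are correct and, modulo the choice of references, amount to the paper's argument spelled out.

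There is, however, a genuine circularity in how you frame the proposal and in your alternative route. You open by saying you will ``combine the equivalence of restriction formulas proved earlier in this paper (Section \ref{sec:equiv}),'' and your inductive alternative relies on ``the equivalence of Theorems \ref{thm:main} and \ref{thm:mainexo} under $\iota$.'' In the paper, that equivalence is \emph{not} proved earlier: it appears after the present theorem and is deduced from Corollary \ref{cor:greenequal}, which in turn is a consequence of the theorem you are trying to prove together with its exotic analogue. So invoking it here is circular. The paper does remark that a purely combinatorial proof of the equivalence (bypassing Corollary \ref{cor:greenequal}) is ``an interesting exercise,'' but it does not supply one; your alternative would need to carry that out, and would also need an independent branching recursion for the Lusztig--Shoji Green functions on limit symbols, which is not established here.

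A smaller point: in Step 1, assembling \cite{mau17} (affine paving, hence purity) with \cite{sho04} and \cite{kat09} does not by itself yield $\tspq{\mu,\nu}=\tspx{\tau(\mu,\nu)}|_{x=q}$; one still needs the Lusztig--Shoji machinery run in the exotic setting (matching of Springer correspondence with the symbol parametrization and the requisite orthogonality input). That is exactly what the references the paper cites provide, so you should cite them directly rather than reassemble the ingredients.
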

\begin{proof} This follows from \cite{ss14} or \cite{kat17}.
\end{proof}

There is also an analogous statement for the total Springer representations of $\Lie Sp_{2n}(\Ftbar)$.
\begin{thm} For any $(\mu, \nu) \in \fP_2$, we have $\tspq{\mu, \nu}=\tspx{\iota(\mu, \nu)}|_{x=q}$. In particular, $\tspq{\mu,\nu}$ is a ``polynomial in $q$''.
\end{thm}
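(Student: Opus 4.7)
The plan is to combine two independent pieces in the literature: the affine paving of exotic Springer fibers constructed by Maurya \cite{mau17}, and the matching of exotic Springer representations with Green functions attached to limit symbols carried out in \cite{ss14} and \cite{kat17}. First I would observe that, after replacing $q$ by a sufficiently large power if necessary, the affine paving of \cite{mau17} can be arranged to be $F$-stable. Standard weight arguments (purity of the cohomology of projective varieties admitting affine pavings) then force $H^{\textup{odd}}(\cB_{N,v}) = 0$ and the eigenvalues of $F^*$ on $H^{2i}(\cB_{N,v})$ to equal $q^i$. Consequently the alternating trace defining $\tspq{\mu,\nu}(w)$ reduces to $\sum_{i \in \bZ} \tr(w, H^{2i}(\cB_{N,v}))\, q^i$, which is by construction a polynomial in $q$; this already yields the second assertion of the theorem.

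The remaining task is to identify this polynomial with the limit-symbol Green function, which in the statement is written $\tspx{\iota(\mu,\nu)}|_{x=q}$ (interpreting $\iota(\mu,\nu)$ as the limit symbol attached to $(\mu,\nu)\in\fP_2$ via the bijection $\tau\colon \fP_2 \simeq \bar{Z}^{r,s}$). Recall that the Lusztig--Shoji algorithm adapted to the complex reflection group $W$ determines $\tspx{X}$ uniquely via a triangularity condition with respect to the combinatorial order on $\bar{Z}^{r,s}$, together with orthogonality in an explicit inner product and a normalization at ``top'' symbols. On the geometric side, Kato's exotic Springer correspondence \cite{kat09}, together with its refinement in \cite{ss13}, controls the graded multiplicities of irreducible $W$-characters inside $H^*(\cB_{N,v})$.

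To conclude, one must verify that these graded multiplicities satisfy the very same triangularity and orthogonality relations that single out $\tspx{\iota(\mu,\nu)}$. This is precisely the content of \cite{ss14}, where Shoji and Sorlin identify graded exotic Springer multiplicities with Kostka functions attached to limit symbols, and independently of \cite{kat17}, where Kato deforms the exotic nilpotent cone to the nilpotent cone of $\Lie Sp_{2n}(\Ftbar)$ and transfers the Lusztig--Shoji formalism. Either reference yields the desired equality $\tspq{\mu,\nu} = \tspx{\iota(\mu,\nu)}|_{x=q}$.

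The main obstacle is precisely this comparison of $G^\theta$-orbit closure orders with the combinatorial order on limit symbols and the matching of inner products; this is substantive but already carried out carefully in the cited work, so at the level of this paper only a careful organization of citations is needed. A secondary technical point is ensuring $F$-stability of the Maurya paving (so that purity and the reduction to a polynomial in $q$ go through); this is handled by passing to a large enough power of $q$, consistent with the remark appearing just before Proposition \ref{prop:resexo}.
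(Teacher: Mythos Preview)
Your proof is correct for the statement as written (with $\tspq{\mu,\nu}$ the exotic total Springer representation), and your approach---invoking the affine paving of \cite{mau17} for purity and polynomiality, then \cite{ss14} or \cite{kat17} for the identification with limit-symbol Green functions---is essentially the argument the paper gives for the \emph{exotic} case. The twist is that the two theorem statements in this subsection of the paper appear to have been inadvertently swapped: the proof that literally follows the statement you were handed treats the $\Lie Sp_{2n}(\Ftbar)$ case (invoking \cite{spa82} and \cite{xue12:comb} for the Springer correspondence via $\bar{Z}^{2n+2,n+1}$, then running the Lusztig--Shoji argument of \cite[Section 24]{lus86} or \cite[Section 6]{ach11}, with polynomiality supplied by \cite[Proposition 3.3]{sho04}), while the proof of the \emph{preceding} theorem is the one-line citation of \cite{ss14} or \cite{kat17} that you effectively reproduce and flesh out. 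The introductory sentences before each theorem (the reference to \cite[Conjecture 6.4]{ah08} before the first, and ``analogous statement for $\ldots\Lie Sp_{2n}(\Ftbar)$'' before the second) and the content of the two proofs make the intended pairing clear; your parenthetical reinterpretation of $\iota(\mu,\nu)$ as $\tau(\mu,\nu)$ is also the correct reading.

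One minor difference in emphasis: you derive polynomiality from Maurya's paving, whereas the paper (for the exotic theorem) gets it immediately from \cite[Proposition 3.3]{sho04}; the paving route is mentioned only later, in the section on affine pavings, as an alternative. Both are valid.
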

\begin{proof} By the result of \cite{spa82} (see also \cite[Section 8]{xue12:comb}), the Springer correspondence of $\Lie Sp_{2n}(\Ftbar)$ is governed by combinatorics of $\bar{Z}^{2n+2, n+1}$. (Note that the convention of \cite{xue12:comb} is slightly different from ours in a sense that $(r,s)=(n+1, n+1)$ therein is equivalent to $(r,s)=(2n+2, n+1)$ in our setting.) Then one can follow the argument of \cite[Section 24]{lus86} or equivalently \cite[Section 6]{ach11}. It is assumed therein that the characteristic of the base field is good (i.e. different from 2 in our case), but their proof is still valid except that the resulting Green functions are not guaranteed to be contained in $\bQ[x]$ but in $\bQ(x^{1/2})$. (See also \cite[Remark 6.2]{ach11}.)  However, we already know the polynomiality of $\tspx{\iota(\mu, \nu)}$, thus the second part of the statement also follows.
\end{proof}

From the two theorems above it follows that ``$\tspq{\lambda, \chi} = \tspq{\iota(\lambda, \chi)}$'', but one needs to be careful since $q$ of $\tspq{\lambda, \chi}$ is a power of $2$ whereas that of $\tspq{\iota(\lambda, \chi)}$ is assumed to be a power of an odd prime. Instead, we define $\tspx{\lambda,\chi}$ (resp. $\tspx{\mu, \nu}$) to be $\tspx{\tau\circ\iota(\lambda, \chi)}$ (resp. $\tspx{\iota(\mu, \nu)}$). They are $\bQ[x]$-valued character of $W$. Then we can state the following corollary, which is merely a summary of the two theorems above.

\begin{cor} \label{cor:greenequal} For $(\lambda, \chi) \in \Omega$, we have $\tspx{\lambda, \chi} = \tspx{\iota(\lambda, \chi)}$.
\end{cor}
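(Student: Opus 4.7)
The plan is to derive the equality by a direct unwinding of the notational conventions fixed just above the corollary, so that the entire substance sits in the two preceding theorems. First I would expand the left-hand side: by the definition given immediately before the statement, $\tspx{\lambda,\chi}$ is a shorthand for the Lusztig--Shoji Green function $\tspx{\tau\circ\iota(\lambda,\chi)}$ attached to the limit symbol $\tau(\iota(\lambda,\chi))\in\bar{Z}^{r,s}$. Second, the right-hand side $\tspx{\iota(\lambda,\chi)}$, in which $\iota(\lambda,\chi)$ lies in $\fP_2$, is by the same convention the Green function attached to that same limit symbol $\tau(\iota(\lambda,\chi))$. These two expressions denote the same element of $\bQ[x]\otimes R(W)$, so the identity follows at once.

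There is essentially no obstacle at the level of the corollary itself: the real work has been pushed into the two theorems that precede it. The first theorem, relying on \cite{ss14,kat17}, identifies $\tspq{\lambda,\chi}$ with the specialization at $x=q$ of the Green function of $\tau\circ\iota(\lambda,\chi)$; the second does the analogous thing for $\tspq{\iota(\lambda,\chi)}$ by running the Lusztig--Shoji algorithm on $\bar{Z}^{2n+2,n+1}$ as in \cite{lus86,ach11}, with the extra polynomiality input needed in bad characteristic. The role of the bijection $\iota:\Omega\to\fP_2$ is precisely to guarantee that both theorems name the \emph{same} limit symbol in $\bar{Z}^{r,s}$, and once that is observed the corollary reduces to a statement about identical notations rather than an independent computation.
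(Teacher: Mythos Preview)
Your proposal is correct and matches the paper's own treatment: the paper states that the corollary ``is merely a summary of the two theorems above'' and gives no separate argument, and indeed once $\tspx{\lambda,\chi}$ and $\tspx{\mu,\nu}$ are \emph{defined} as $\tspx{\tau\circ\iota(\lambda,\chi)}$ and $\tspx{\tau(\mu,\nu)}$ respectively, both sides of the corollary unwind to the same Green function attached to the limit symbol $\tau(\iota(\lambda,\chi))$. Your observation that the substantive content lies in the two preceding theorems is exactly the point the paper is making.
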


\subsection{Equivalence of two main theorems} 
By the result above, we have:
\begin{thm} Suppose that $(\lambda, \chi) \in \Omega$ and $(\mu, \nu) = \iota(\lambda, \chi)\in \fP_2$. Then the formula of Theorem \ref{thm:main} is equivalent to that of Theorem \ref{thm:mainexo}. More precisely, if we replace each $\tspq{-, -}$ with $\tspx{\iota(-,-)}$, $\qe{-}$ with $\xe{-}$, etc. in the formula of Theorem \ref{thm:main} then it coincides with the formula of Theorem \ref{thm:mainexo} after replacing each $\tspq{-, -}$ with $\tspx{-,-}$, $\qe{-}$ with $\xe{-}$, etc.
\end{thm}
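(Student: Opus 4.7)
The plan is to use Corollary \ref{cor:greenequal} as the bridge: since $\tspx{\lambda,\chi}=\tspx{\iota(\lambda,\chi)}$ for every $(\lambda,\chi)\in\Omega$, the two main theorems become two expansions of the same character $\Res^W_{W'}\tspx{\iota(\lambda,\chi)}$ in terms of the basis $\{\tspx{\mu',\nu'}\}_{(\mu',\nu')\in\fP_2'}$, where $\fP_2'$ is the analogous set for rank $n-1$. Since this basis is linearly independent (the total Springer characters are known to be linearly independent, as follows e.g. from the Lusztig--Shoji algorithm), it suffices to match the two expansions coefficient-by-coefficient after applying $\iota$. Thus the goal reduces to a purely combinatorial verification: given $(\lambda,\chi)\in\Omega$ with $\iota(\lambda,\chi)=(\mu,\nu)$, rewrite the RHS of Theorem \ref{thm:main} (with $q$ replaced by $x$) using $\iota$ applied to each pair appearing in a $\tspq{-,-}$ symbol, and check that the result equals the RHS of Theorem \ref{thm:mainexo} (again with $q$ replaced by $x$).

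First I would set up a dictionary between the two indexings of the outer sum. In the symplectic side the outer summation is over $r\in\und{\lambda}$ (with $\lambda\vdash 2n$), partitioned into four cases according to criticality, parity of $r$, parity of $m_\lambda(r)$, and whether $\chi(r)=r/2$. In the exotic side the outer summation is over $r\in\und{\lambda}$ (with $\lambda=\mu+\nu\vdash n$), split by whether $r\in\und{\lambda}_v$ and by the cases 2, 3, 4 defined via $\nabla$ and $\Delta$. Using the explicit description of $\iota$ and $\iota^{-1}$, one directly translates: blocks $\{i,i{+}1\}$ of the bijection correspond to the non-critical values in $\lambda$, while singleton blocks $\{i\}$ (forced precisely when $\chi(\lambda_i)=\lambda_i/2$) correspond to critical values with $\chi(r)=r/2$; the remaining critical values with $\chi(r)\neq r/2$ correspond to block boundaries where the parts $\mu_i,\nu_i$ and $\mu_{i+1},\nu_{i+1}$ interact nontrivially. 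Matching this with the conditions defining $\und{\lambda}_v$ and cases 2--4 (which reflect the comparisons $\nabla(r')$ vs.\ $\nabla(r)$, $\Delta(r')$ vs.\ $\Delta(r)$) gives the desired correspondence of summation indices, up to a reindexing that also accounts for the fact that $\lambda$ means different partitions on the two sides (of $2n$ vs.\ of $n$).

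Next I would verify coefficient-by-coefficient. For each case one needs to (a) identify $\iota$ of each $(\lambda\pch{r,r}{r-1,r-1},\chi')$ or $(\lambda\pch{r}{r-2},\chi')$ appearing on the symplectic side with the corresponding $(\mu',\nu')$ on the exotic side, and (b) match the coefficients, which are Gaussian binomial-like expressions in $q$ (resp.\ $x$). The exponents involve $m_\lambda(\geq r)$, $m_\lambda(>r)$, etc.\ on the symplectic side and $2m_\lambda(\geq r)$, $2m_\lambda(>r)$, etc.\ on the exotic side; this doubling is exactly accounted for by the fact that the exotic Jordan type of $N$ is $\lambda\cup\lambda$, so under $\iota$ the multiplicities double. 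For the subparameter updates such as $\chicrit^{*}$, $\chicrit^{**}$, $\chicrit^{***}$ in Theorem \ref{thm:main}, one checks by direct calculation with the formulas defining $\iota$ that they translate to $\mu^{\pm[\cdot,\cdot]}$, $\nu^{\pm[\cdot,\cdot]}$ and the substitutions $\mu\pch{\nabla(r)}{\nabla(r)-1}$, $\nu\pch{\Delta(r)}{\Delta(r)-1}$ appearing on the exotic side.

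The main obstacle will be the last part, namely verifying that the \emph{inner} sums over $k\in\und{\lambda}$ with $r<k\leq j$ in cases 2, 3, 4 of Theorem \ref{thm:main} match the corresponding inner sums in case 4 of Theorem \ref{thm:mainexo}, together with the careful treatment of the ``extra'' pieces $\qe{m_{>j}}$. Here the definition of $j$ on the symplectic side (largest element with $\chi(j)=\chi(r)$ and $j-\chi(j)\neq r'-\chi(r')$ for $r'>j$) must be reconciled with the exotic-side definition (largest $j$ with $\nabla(j)=\nabla(r)$ and $\Delta(j)\neq\Delta(r')$ for $r'>j$); via $\iota$ this is a straightforward but tedious check of inequalities among the parts $\mu_i,\nu_i$. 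Once these case-by-case identifications are complete, the equivalence of the two theorems follows, since both sides then agree term by term as $\bQ[x]$-linear combinations in the basis $\{\tspx{\mu',\nu'}\}$.
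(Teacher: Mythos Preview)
Your first paragraph already contains the complete proof, and it is essentially identical to the paper's argument: both formulas, after the indicated substitutions, compute $\Res^W_{W'}\tspx{\lambda,\chi}=\Res^W_{W'}\tspx{\iota(\lambda,\chi)}$ (this uses Corollary~\ref{cor:greenequal} together with the fact that Theorems~\ref{thm:main} and~\ref{thm:mainexo} are both \emph{true} restriction formulas); since $\{\tspx{\mu',\nu'}\}$ is linearly independent over $\bQ(x)$, the coefficients on the two sides are forced to agree. That is the end of the proof.

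Everything from ``Thus the goal reduces to a purely combinatorial verification'' onward is unnecessary, and reflects a logical slip: once you know that two expressions represent the \emph{same} character in a linearly independent family, the coefficients match automatically---there is nothing to check. The paper makes exactly this point in two sentences, and then explicitly relegates the direct term-by-term combinatorial matching you outline to an exercise (see the Remark following the proof). So your proposal is not wrong, but it conflates two distinct arguments: the short one (which you state and then abandon) and the long combinatorial one (which is interesting but not needed, and which you only sketch without carrying out). You should stop after the first paragraph.
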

\begin{proof} By Corollary \ref{cor:greenequal}, the two formulas of Theorem \ref{thm:main}
and \ref{thm:mainexo} should give the same $\bQ[x]$-valued character of $W'$ (after replacing $q$ with $x$). Now the result follows from the fact that $\{\tspx{\lambda, \chi} \mid (\lambda, \chi) \in \Omega\}$ and $\{\tspx{\mu, \nu}\mid (\mu, \nu) \in \fP_2\}$ are linearly independent sets in the vector space of $\bQ(x)$-valued characters of $W$ for any $n \in \bZ_{>0}$.
\end{proof}

\begin{rmk} It is an interesting exercise to prove the above statement based only on the combinatorial description of $\iota$, which we leave to the reader.
\end{rmk}

\section{Examples} \label{sec:example}
Here we provide some examples of our main theorems \ref{thm:main} and \ref{thm:mainexo}. In this section, we write $\cdots 2^{m_2}1^{m_1}$ instead of $\lambda$ (where $m_r=m_\lambda(r)$) and $\cdots 2^{m_2}_{\chi(2)}1^{m_1}_{\chi(1)}$ instead of $(\lambda, \chi)$ for simplicity. When $n=2$, we have
\begin{align*}
\Res^{W}_{W'}\tspq{4^1_2}&=\tspq{2^1_1}
\\\Res^{W}_{W'}\tspq{2^1_11^2_0}&=(q^2+q)\tspq{2^1_1}+\tspq{1^2_0}
\\\Res^{W}_{W'}\tspq{2^2_1}&=q\tspq{2^1_1}+\tspq{1^2_0}
\\\Res^{W}_{W'}\tspq{2^2_0}&=(q+1)\tspq{1^2_0}
\\\Res^{W}_{W'}\tspq{1^4_0}&=(q^3+q^2+q+1)\tspq{1^2_0}
\end{align*}
or equivalently
\begin{align*}
\Res^{W}_{W'}\tspq{2^1,\emptyset}&=\tspq{1^1,\emptyset}
\\\Res^{W}_{W'}\tspq{1^2,\emptyset}&=(q^2+q)\tspq{1^1, \emptyset}+\tspq{\emptyset, 1^1}
\\\Res^{W}_{W'}\tspq{1^1,1^1}&=q\tspq{1^1, \emptyset}+\tspq{\emptyset, 1^1}
\\\Res^{W}_{W'}\tspq{\emptyset, 2^1}&=(q+1)\tspq{\emptyset, 1^1}
\\\Res^{W}_{W'}\tspq{\emptyset, 1^2}&=(q^3+q^2+q+1)\tspq{\emptyset, 1^1}
\end{align*}
If $n=3$, then we have
\begin{align*}
\Res^{W}_{W'}\tspq{6^1_3}&=\tspq{4^1_2}
\\\Res^{W}_{W'}\tspq{4^1_21^2_0}&=(q^2 + q)\tspq{4^1_2}+\tspq{2^1_11^2_0}
\\\Res^{W}_{W'}\tspq{2^1_11^4_0}&=(q^4 + q^3 + q^2 + q)\tspq{2^1_11^2_0}+\tspq{1^4_0}
\\\Res^{W}_{W'}\tspq{4^1_22^1_1}&=q\tspq{4^1_2}+\tspq{2^2_1}
\\\Res^{W}_{W'}\tspq{2^3_1}&=(q^2-1)\tspq{2^2_1}+(q+1)\tspq{2^1_11^2_0}+\tspq{2^2_0}
\\\Res^{W}_{W'}\tspq{3^3_1}&=q\tspq{2^2_1}+\tspq{2^2_0}
\\\Res^{W}_{W'}\tspq{2^2_11^2_0}&=(q^3 + q^2)\tspq{2^2_1}+q\tspq{2^1_11^2_0}+\tspq{1^4_0}
\\\Res^{W}_{W'}\tspq{3^2_0}&=(q + 1)\tspq{2^2_0}
\\\Res^{W}_{W'}\tspq{2^3_1}&=(q^3 + q^2)\tspq{2^2_0}+(q+1)\tspq{1^4_0}
\\\Res^{W}_{W'}\tspq{2^3_1}&=(q^5 + q^4 + q^3 + q^2 + q + 1)\tspq{1^4_0}
\end{align*}
or equivalently
\begin{align*}
\Res^{W}_{W'}\tspq{3^1,\emptyset}&=\tspq{2^1, \emptyset}
\\\Res^{W}_{W'}\tspq{2^11^1,\emptyset}&=(q^2 + q)\tspq{2^1,\emptyset}+\tspq{1^2,\emptyset}
\\\Res^{W}_{W'}\tspq{1^3, \emptyset}&=(q^4 + q^3 + q^2 + q)\tspq{1^2,\emptyset}+\tspq{\emptyset, 1^2}
\\\Res^{W}_{W'}\tspq{2^1,1^1}&=q\tspq{2^1, \emptyset}+\tspq{1^1,1^1}
\\\Res^{W}_{W'}\tspq{1^2, 1^1}&=(q^2-1)\tspq{1^1,1^1}+(q+1)\tspq{1^2, \emptyset}+\tspq{\emptyset, 2^1}
\\\Res^{W}_{W'}\tspq{1^1,2^1}&=q\tspq{1^1,1^1}+\tspq{\emptyset, 2^1}
\\\Res^{W}_{W'}\tspq{1^1,1^2}&=(q^3 + q^2)\tspq{1^1,1^1}+q\tspq{1^2,\emptyset}+\tspq{\emptyset, 1^2}
\\\Res^{W}_{W'}\tspq{\emptyset,3^1}&=(q + 1)\tspq{\emptyset, 2^1}
\\\Res^{W}_{W'}\tspq{\emptyset, 2^11^1}&=(q^3 + q^2)\tspq{\emptyset, 2^1}+(q+1)\tspq{\emptyset, 1^2}
\\\Res^{W}_{W'}\tspq{\emptyset, 1^3}&=(q^5 + q^4 + q^3 + q^2 + q + 1)\tspq{\emptyset, 1^2}
\end{align*}

From now on we write $\chiq{-,-}\colonequals \tspq{-,-}(id)$ and $\chibq{-,-}\colonequals \tspq{-,-}(s_1)$ (see Section \ref{sec:weyl} for the definition of $s_1 \in W$). When $n=1$, we have the base case
$$\chiq{2^1_1}=\chibq{2^1_1}=1,\quad \chiq{1^2_0}=q+1,\quad \chibq{1^2_0}=-q+1$$
or equivalently
$$\chiq{1^1, \emptyset}=\chibq{1^1, \emptyset}=1,\quad \chiq{\emptyset, 1^1}=q+1,\quad \chibq{\emptyset, 1^1}=-q+1$$
Therefore, from the equations above it follows that when $n=2$
\begin{align*}
\chiq{4^1_2}=\chiq{2^1,\emptyset}&=1
\\\chiq{2^1_11^2_0}=\chiq{1^2,\emptyset}&=q^2 + 2q + 1
\\\chiq{2^2_1}=\chiq{1^1,1^1}&=2q + 1
\\\chiq{2^2_0}=\chiq{\emptyset, 2^1}&=q^2 + 2q + 1
\\\chiq{1^4_0}=\chiq{\emptyset, 1^2}&=q^4 + 2q^3 + 2q^2 + 2q + 1\allowdisplaybreaks
\\\chibq{4^1_2}=\chibq{2^1,\emptyset}&=1
\\\chibq{2^1_11^2_0}=\chibq{1^2,\emptyset}&=q^2 + 1
\\\chibq{2^2_1}=\chibq{1^1,1^1}&=1
\\\chibq{2^2_0}=\chibq{\emptyset, 2^1}&= -q^2 + 1
\\\chibq{1^4_0}=\chibq{\emptyset, 1^2}&=-q^4 + 1
\end{align*}
and when $n=3$
\begin{align*}
\chiq{6^1_3}=\chiq{3^1,\emptyset}&= 1
\\\chiq{4^1_21^2_0}=\chiq{2^11^1,\emptyset}&=2q^2 + 3q + 1
\\\chiq{2^1_11^4_0}=\chiq{1^3, \emptyset}&=q^6 + 3q^5 + 5q^4 + 6q^3 + 5q^2 + 3q + 1
\\\chiq{4^1_22^1_1}=\chiq{2^1,1^1}&=3q + 1
\\\chiq{2^3_1}=\chiq{1^2, 1^1}&=3q^3 + 5q^2 + 3q + 1
\\\chiq{3^3_1}=\chiq{1^1,2^1}&=3q^2 + 3q + 1
\\\chiq{2^2_11^2_0}=\chiq{1^1,1^2}&=3q^4 + 6q^3 + 5q^2 + 3q + 1
\\\chiq{3^2_0}=\chiq{\emptyset,3^1}&=q^3 + 3q^2 + 3q + 1
\\\chiq{2^3_1}=\chiq{\emptyset, 2^11^1}&=2q^5 + 6q^4 + 7q^3 + 5q^2 + 3q + 1
\\\chiq{2^3_1}=\chiq{\emptyset, 1^3}&=q^9 + 3q^8 + 5q^7 + 7q^6 + 8q^5 + 8q^4 + 7q^3 + 5q^2 + 3q + 1\allowdisplaybreaks
\\\chibq{6^1_3}=\chibq{3^1,\emptyset}&= 1
\\\chibq{4^1_21^2_0}=\chibq{2^11^1,\emptyset}&=2q^2 + q + 1
\\\chibq{2^1_11^4_0}=\chibq{1^3, \emptyset}&= q^6 + q^5 + q^4 + 2q^3 + q^2 + q + 1
\\\chibq{4^1_22^1_1}=\chibq{2^1,1^1}&=q + 1
\\\chibq{2^3_1}=\chibq{1^2, 1^1}&=q^3 + q^2 + q + 1
\\\chibq{3^3_1}=\chibq{1^1,2^1}&=-q^2 + q + 1
\\\chibq{2^2_11^2_0}=\chibq{1^1,1^2}&= -q^4 + 2q^3 + q^2 + q + 1
\\\chibq{3^2_0}=\chibq{\emptyset,3^1}&=-q^3 - q^2 + q + 1
\\\chibq{2^3_1}=\chibq{\emptyset, 2^11^1}&= -2q^5 - 2q^4 + q^3 + q^2 + q + 1
\\\chibq{2^3_1}=\chibq{\emptyset, 1^3}&=-q^9 - q^8 - q^7 - q^6 + q^3 + q^2 + q + 1
\end{align*}
We can iterate this process to calculate $\chiq{-,-}, \chibq{-,-}$ for large $n$. For example, we have
\begin{align*}
\chiq{5^13^11^1, 4^12^1}&=7297290q^{20} + 22783761q^{19} + 36074610q^{18} + 39588549q^{17} + 34435440q^{16} 
\\&\quad+ 25561211q^{15} + 16909620q^{14} + 10213320q^{13} + 5704674q^{12} + 2963833q^{11} 
\\&\quad+ 1434006q^{10} + 644722q^9 + 268005q^8 + 102202q^7 + 35356q^6 + 10918q^5 
\\&\quad+ 2939q^4 + 665q^3 + 119q^2 + 15q + 1, \allowdisplaybreaks
\\\chibq{5^13^11^1, 4^12^1}&=1459458q^{20} + 4711707q^{19} + 7877298q^{18} + 9184175q^{17} + 8489624q^{16}
\\&\quad+ 6697093q^{15} + 4730908q^{14} + 3068364q^{13} + 1847598q^{12} + 1036607q^{11} 
\\&\quad+ 541814q^{10} + 263094q^9 + 118145q^8 + 48750q^7 + 18304q^6 + 6162q^5 
\\&\quad+ 1819q^4 + 455q^3 + 91q^2 + 13q + 1.
\end{align*}

\section{On an affine paving of Springer fibers}\label{sec:affinepav}
For a variety $X$, we say that $X$ admits an affine paving if there exist pairwise disjoint subvarieties $Y_1, Y_2, \ldots, Y_k$ of $X$ such that $\bigsqcup_{i=1}^j Y_i$ is closed for any $j \in [1, k]$ and each $Y_i$ is isomorphic to an affine space. Now suppose that $X$ and $Y_1, \ldots, Y_k$ are defined over some finite field $\Fq$ and $F$ is the corresponding geometric Frobenius acting on each of them. Then it is easy to show that $H^i(X)=0$ when $i$ is odd and the eigenvalues of $F^*$ on $H^{2i}(X)$ are equal to $q^i$.

It is a well-known fact that any Springer fiber of classical groups in good characteristic admits an affine paving. It is proved by \cite{dclp} when the base field is $\bC$, but their argument can be generalized to good characteristic setting; see \cite[Section 11]{jan04}. However, to the best of the author's knowledge it is still not known in the case of bad characteristic.

On the other hand, it is known that
\begin{thm}[{\cite[Theorem 1.1]{mau17}}] An exotic Springer fiber admits an affine paving over any algebraically closed field.
\end{thm}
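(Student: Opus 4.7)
The plan is to prove the theorem by induction on $n$, using the partial Springer resolution of Proposition \ref{prop:resexo} to reduce the rank. The base case $n=0$ is immediate since $\cB_{N,v}$ is a single point. For the inductive step, I would consider the projection $\pi : \cB_{N,v} \to \bP(\ker N)$ defined by $F_\bullet \mapsto F_1$. Its image is closed (a line $l$ with $v \notin l^\perp$ gives an empty fiber and is discarded), and for each $l$ in the image the fiber $\pi^{-1}(l)$ is naturally identified with the exotic Springer fiber $\cB_{N|_{l^\perp/l}, v+l}$ of rank $n-1$, which by the induction hypothesis admits an affine paving. It therefore suffices to establish two things: (a) the image of $\pi$ admits an affine paving on each cell of which the $G^\theta$-orbit class $(\mu(l), \nu(l))$ is constant, and (b) $\pi$ is Zariski-locally trivial over each such cell.

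For part (a) I would use the stratification already implicit in Section \ref{sec:calcexo}: $\bP(\ker N)$ decomposes as $\bigsqcup_{r \in \und{\lambda}}(\bP(\ker_{\geq r}N) \setminus \bP(\ker_{>r}N))$, and each piece is further refined by the auxiliary linear subvarieties $H$ and $H'$ introduced case-by-case in Sections \ref{sec:case1exo}--\ref{sec:case4exo}. Each resulting stratum is an affine space or the complement of a linear subvariety in one, hence affinely paved, and the orbit class $(\mu(l),\nu(l))$ is constant on each stratum by the very computations of Section \ref{sec:calcexo}. For part (b), the natural strategy is to construct a cocharacter $\phi : \bG_m \to G^\theta$ -- the exotic analogue of a Jacobson--Morozov-type cocharacter, as studied by Kato -- such that $\phi$ normalizes the pair $(N,v)$ and induces an action on $\cB_{N,v}$ whose fixed points are isolated with affine attracting cells. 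The associated Bialynicki--Birula decomposition of $\cB_{N,v}$ then refines $\pi$ into an affine paving directly, bypassing the need for explicit local trivializations.

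The main obstacle will be the construction of the cocharacter $\phi$ together with verification of the isolated-fixed-point condition. In the classical Springer setting the $\fsp$-triple attached to $N$ supplies such a cocharacter for free, but in the exotic case the datum of $v$ must also be accommodated: the weight decomposition of $V$ under $\phi$ must be compatible both with the Jordan structure of $N$ and with the distinguished vectors $\vv^{\nu_i+1}_{r_i,1}$ used in the standard model of Section \ref{sec:standardexo}. If the cocharacter approach runs into obstructions, an alternative is to argue entirely by hand, upgrading the ad hoc Jordan-basis constructions of Sections \ref{sec:case1exo}--\ref{sec:case4exo} to algebraic families parametrized by the strata of the base, thereby obtaining explicit trivializations of $\pi$ stratum by stratum and closing the induction.
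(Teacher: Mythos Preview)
The paper does not prove this theorem at all; it is quoted verbatim as \cite[Theorem 1.1]{mau17} and used as a black box (see the opening of Section~\ref{sec:affinepav} and the remark after \ref{sec:greenexo}). There is therefore no proof in the paper to compare your proposal against.

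That said, your outline has a genuine gap worth flagging. The inductive scheme via $\pi:\cB_{N,v}\to\bP(\ker N)$ is reasonable, and the stratification of the base from Section~\ref{sec:calcexo} does make the orbit type $(\mu(l),\nu(l))$ constant on each piece. The problem is your step (b). Invoking a Bia{\l}ynicki--Birula decomposition of $\cB_{N,v}$ for a suitable cocharacter does \emph{not} automatically yield affine cells: BB attracting sets are affine spaces only when the ambient variety is smooth (or when one can verify smoothness along the flow), and exotic Springer fibers are in general singular. What actually needs to be shown---and what Maulik does in \cite{mau17}---is that the fixed locus under a well-chosen torus action is a disjoint union of smaller exotic Springer fibers and that each attracting set is an affine-space bundle over its fixed component; this requires a careful analysis of the $G^\theta$-parabolic attached to the cocharacter and of how the pair $(N,v)$ sits inside its Levi, not merely the existence of isolated fixed points. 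Your fallback plan of upgrading the basis constructions of \ref{sec:case1exo}--\ref{sec:case4exo} to algebraic families is closer in spirit to a complete argument, but as written it is only a hope: you would still need to exhibit, stratum by stratum, an isomorphism $\pi^{-1}(Y)\simeq Y\times\cB_{N',v'}$ compatible with the $\Fq$-structure, and the Jordan bases in Section~\ref{sec:calcexo} were built pointwise, not in families.
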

Recall that the total Springer representations for the exotic case are defined by $\tspq{\mu, \nu}:W \rightarrow \qlbar: w \mapsto \sum_{i\in \bZ}(-1)^i \tr(wF^*, H^i(\cB_{N,v}))$ for each $(\mu, \nu) \in \fP_2$. Assume as in Part \ref{part:exotic} that $\kk$ is an algebraically closed field of $\bF_p$ for some odd prime $p$. Suppose that $(N, v) \in \cN^- \times V$ is defined over $\Fq$ where $q$ is a power of $p$ and that its orbit is parametrized by $(\mu,\nu)\in \fP_2$. Then by replacing $q$ by its (sufficiently large) power if necessary, we may assume that an affine paving of $\cB_{N,v}$ is defined over $\Fq$. Then it follows that $H^i(\cB_{N,v})=0$ if $i$ is odd and the eigenvalues of $F^*$ on $H^{2i}(\cB_{N,v})$ equals $q^{i}$. In other words, we have $\tspq{\mu, \nu}(w)=\sum_{i\in \bZ} \tr(w, H^{2i}(\cB_{N,v}))q^i$. From this we may also prove that $\tspq{\mu, \nu}$ is ``a polynomial in $q$'', without using \cite[Proposition 3.3]{sho04}.

On the other hand, it is still not known that a Springer fiber associated to $\Lie Sp_{2n}(\Ftbar)$ admits an affine paving except some trivial cases. Here we discuss some partial results which can be obtained from (the proof of) Theorem \ref{thm:main}. We start with the following lemma.

\begin{lem} \label{lem:affine}Suppose that $(\lambda, \chi) \in \Omega$ satisfies the following condition.
\begin{enumerate}
\item If $r\in \und{\lambda}$ is not critical, then either $\chi(r)=0$ or there exist $r',r'' \in \und{\lambda}$ such that $r'<r<r''$, $\chi(r)=\chi(r')$, and $r''-\chi(r'')=r-\chi(r)$. 
\item For $r\in \und{\lambda}$, if $m_\lambda(r)$ is odd then $m_\lambda(r)=1$. 
\end{enumerate}
Let $\Fq$ be a finite field of characteristic 2 and $F$ be the corresponding geometric Frobenius. Assume that a nilpotent element $N\in (\Lie Sp_{2n}(\Ftbar))^F$ is contained in the $Sp_{2n}$-orbit parametrized by $(\lambda, \chi)$. Let $\pi: \cB_N \rightarrow \bP(\ker N)$ be the canonical projection. 
Then there exists an affine paving $Y_1, \ldots, Y_k$ of $\bP(\ker N)$ defined over $\Fq$ such that $\pi^{-1}(Y_i) \simeq \pi^{-1}(l_i) \times Y_i$ as $\Fq$-varieties for some/any $l_i \in Y_i^F$.
\end{lem}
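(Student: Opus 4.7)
The plan is to refine the stratification of $\bP(\ker N)$ used in Section \ref{sec:calculation} into a fine enough affine paving on which the $Sp$-orbit of $N|_{l^\perp/l}$ is constant, and then to trivialize $\pi$ over each piece by explicitly constructing an algebraic family of $N$-equivariant symplectic isomorphisms $l_i^\perp/l_i \to l^\perp/l$.

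First I would stratify $\bP(\ker N) = \bigsqcup_{r \in \und{\lambda}} (\bP(\ker_{\geq r}N) - \bP(\ker_{>r}N))$ and refine as follows. Under condition (1), any non-critical $r$ with $\chi(r) \neq 0$ admits a smaller $r' \in \und{\lambda}$ with $\chi(r') = \chi(r)$, so every line $l$ in the corresponding piece falls into Case 1 of Section \ref{sec:case1}, and hence $(\lambda(l), \chi(l))$ is constant on the whole piece. For critical $r$, I would subdivide the piece along the explicit linear strata ($H$, $H'$, and the nested $\ker_{>j_b}N$'s) appearing in Sections \ref{sec:case2}, \ref{sec:case3}, and \ref{sec:case4}; on each refined stratum, $(\lambda(l), \chi(l))$ is constant. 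Condition (2) eliminates the cases where $m_\lambda(r)$ is odd and greater than one, keeping the pairing structure of Section \ref{sec:standard} simple. Each resulting stratum is the complement of a union of linear subvarieties inside a linear subvariety of $\bP(\ker_{\geq r}N)$, so by stratifying by leading nonzero coordinate it admits an affine paving defined over $\Fq$, yielding the required affine paving $Y_1, \ldots, Y_k$ of $\bP(\ker N)$.

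For the triviality $\pi^{-1}(Y_i) \simeq Y_i \times \pi^{-1}(l_i)$, the constancy of $(\lambda(l), \chi(l))$ on $Y_i$ guarantees that the fibers of $\pi$ are pairwise isomorphic; the task is to make this isomorphism algebraic. I would construct, for each $l \in Y_i$, an explicit map $\Phi_l : l_i^\perp/l_i \to l^\perp/l$ by modifying the standard basis $\{\vv^t_{r,s}\}$ via corrections of the form $\vv^t_{r,s} \mapsto \vv^t_{r,s} - \br{\vv^t_{r,s}, \ww}\vv'$ for a suitable dual vector $\vv'$, exactly mirroring the basis adaptations carried out in the analogous exotic calculation of Section \ref{sec:calcexo}. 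Since the coefficients are polynomial in the coordinates $a_{r,s}$ of $l$, the map $\Phi_l$ is an $\Fq$-morphism; by construction it intertwines $N$ and preserves the symplectic form, and the assignment $(l, F_\bullet) \mapsto \Phi_l(F_\bullet)$ gives the required $\Fq$-trivialization.

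The main obstacle is a clean case-by-case verification of the algebraicity and symplectic-preservation of $\Phi_l$ in each critical-$r$ subcase, where the self-paired $m_\lambda(r) = 1$ blocks in Section \ref{sec:standard} require particular attention. Condition (2) is precisely what prevents more tangled self-pairings ($m_\lambda(r) \geq 3$ odd) from appearing, so the corrections $\br{\vv^t_{r,s},\ww}\vv'$ remain polynomial in $l$ throughout. The scheme-theoretic non-reducedness noted in the remark at the end of Case 2 of Section \ref{sec:case1} causes no trouble here, since under condition (1) that particular hyperplane subdivision is never invoked.
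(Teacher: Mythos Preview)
Your approach is essentially the paper's: stratify $\bP(\ker N)$ according to the pieces arising in Section~\ref{sec:calculation}, refine each piece into affine cells by leading nonzero coordinate, and then argue that $\pi$ trivializes over each cell. The paper's proof is terser than yours on the trivialization step (it simply leaves that verification to the reader), while you sketch the explicit family $\Phi_l$; in that sense you give more, not less.

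There is one simplification in the paper's argument that you do not exploit. Condition~(1) forces $\und{\lambda}\cap[r,j]=\{r\}$ whenever $r$ is critical: if there were some $j_a>r$ with $\chi(j_a)=\chi(r)$, then $j_a$ would be non-critical with $\chi(j_a)\neq 0$, so condition~(1) produces $r''>j_a$ with $r''-\chi(r'')=j_a-\chi(j_a)$, contradicting the defining property of $j$. Hence in each of \ref{sec:case2}, \ref{sec:case3}, \ref{sec:case4} the ``nested $\ker_{>j_b}N$'' layers you mention collapse, and the paving of $\bP(\ker_{\geq r}N)-\bP(\ker_{>r}N)$ reduces to $\bP(\ker_{\geq r}N)-H$, the cells $H_3,\ldots,H_{m(r)}$ inside $H-H'$, and $H'-\bP(\ker_{>r}N)$ (and in case~\ref{sec:case3} condition~(2) gives $m(r)=1$, so the whole piece is a single cell). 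Your version still works, since the extra strata are empty and your further refinement by leading coordinate covers everything, but recognizing $j=r$ is what makes the paper's description clean and is really the point of hypothesis~(1) for critical $r$.
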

\begin{proof} To this end we recall the calculations in Section \ref{sec:calculation} and check case-by-case. Here we only describe an affine paving of $\bP(\ker N)$ and leave to the reader that this paving satisfies the conditions in the statement.
\begin{enumerate}[leftmargin=*]
\item[(\ref{sec:case1})] Under our assumption, we only need to consider \textbf{Case 1} thereof. In this case, we may set the stratification of $\bP(\ker_{\geq r}N)-\bP(\ker_{>r}N)$ by $H_1, H_2, \ldots, H_{m(r)}$ where each $H_i$ is defined by $a_{r,1}=\cdots=a_{r,i-1}=0$ and $a_{r,i}\neq 0$.
\item[(\ref{sec:case2})] Under our assumption we have $\und{\lambda}\cap[r,j] = \{r\}$. We set the stratification of $\bP(\ker_{\geq r}N)-\bP(\ker_{>r}N)$ by $\bP(\ker_{\geq r}N)-H$, $H_3, H_4, \ldots, H_{m(r)}$, $H'-\bP(\ker_{>r}N)$, where $H, H'$ are as in \ref{sec:case2} and each $H_i$ is defined by $a_{r,1}=0$, $a_{r,3}=\cdots=a_{r,i-1}=0$, and $a_{r,i}\neq 0$.
\item[(\ref{sec:case3})] In this case we have $m(r)=1$ and also $\und{\lambda}\cap[r,j] = \{r\}$. Here $\bP(\ker_{\geq r}N)-\bP(\ker_{>r}N)$ itself defines such a stratification.
\item[(\ref{sec:case4})] Similarly to above we have $\und{\lambda}\cap[r,j] = \{r\}$. We set the stratification of $\bP(\ker_{\geq r}N)-\bP(\ker_{>r}N)$ by $\bP(\ker_{\geq r}N)-H$, $H_3, H_4, \ldots, H_{m(r)}$, $H'-\bP(\ker_{>r}N)$, where $H, H'$ are as in \ref{sec:case4} and each $H_i$ is defined by $a_{r,1}=0$, $a_{r,3}=\cdots=a_{r,i-1}=0$, and $a_{r,i}\neq 0$.
\end{enumerate}
Now the desired affine paving of $\bP(\ker N)$ is defined by taking the union of all the cases above.
\end{proof}
\begin{rmk}
Under the same assumption one can check that all the coefficients which appear in the formula of Theorem \ref{thm:main} are sums of powers of $q$, which is consistent with the statement of Lemma \ref{lem:affine}.
\end{rmk}
Now we use Lemma \ref{lem:affine} to prove the existence of an affine paving in some special cases.
\begin{thm} Suppose that a nilpotent element $N\in (\Lie Sp_{2n}(\Ftbar))^F$ is contained in the $Sp_{2n}$-orbit parametrized by $(\lambda, \chi)\in \Omega$. Assume that either $\lambda_3\leq 1$ or $\chi=0$. Then $\cB_N$ admits an affine paving.
\end{thm}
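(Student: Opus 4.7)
The plan is to induct on $n$, with the cases $n\leq 1$ immediate. For the inductive step I work with the projection $\pi\colon \cB_N \to \bP(\ker N)$ sending $F_\bullet$ to $F_1$, and aim to find an affine paving $Y_1,\dots,Y_k$ of $\bP(\ker N)$ defined over $\Fq$ such that $\pi^{-1}(Y_i)\cong \pi^{-1}(l_i)\times Y_i$ and each fiber $\pi^{-1}(l_i)$ is a Springer fiber for $Sp_{2n-2}$ whose parameter $(\lambda(l_i),\chi(l_i))$ still satisfies the hypothesis of the theorem. Granting this, products of affine pavings complete the argument.

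If $\chi=0$, then both conditions of Lemma \ref{lem:affine} hold for free: every $r\in \und\lambda$ is non-critical with $\chi(r)=0$, and the $\Omega$-axiom $m_\lambda(r)$ odd $\Rightarrow \chi(r)=r/2$ forces every multiplicity $m_\lambda(r)$ to be even. Case \ref{sec:case1} (subcase 1) of Section \ref{sec:calculation} shows $\chi(l_i)=0$ for each $l_i$ in the resulting paving, so the inductive hypothesis applies.

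If $\lambda_3\leq 1$ and $\chi\neq 0$, I distinguish the case $\lambda_1=\lambda_2$ or $\lambda_2\leq 1$ from the case $\lambda_1>\lambda_2>1$. In the former, $\lambda$ is one of $(\lambda_1)$, $(\lambda_1,\lambda_1)$, $(\lambda_1,1^{2k})$, or $(\lambda_1,\lambda_1,1^{2k})$; the only element of $\und\lambda$ that could a priori be non-critical with nonzero $\chi$ is the maximal part $\lambda_1$, which is automatically critical since $\und\lambda$ has no larger element. Thus Lemma \ref{lem:affine} applies, and a case-by-case inspection of Section \ref{sec:calculation} confirms $\lambda(l)_3\leq 1$ on every stratum, so the inductive hypothesis applies to each fiber.

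The remaining subcase $\lambda=(\lambda_1,\lambda_2,1^{2k})$ with $\lambda_1>\lambda_2>1$ (possibly $k=0$) is the main obstacle: here $\lambda_2$ is non-critical with $\chi(\lambda_2)=\lambda_2/2\neq 0$ yet $\chi(1)=0$, so condition (1) of Lemma \ref{lem:affine} fails. I would treat this subcase by constructing the paving by hand via the decomposition
\[
\bP(\ker N) = \{l_{\lambda_1}\} \sqcup \bigl(\bP(\ker_{\geq \lambda_2}N)\setminus\{l_{\lambda_1}\}\bigr) \sqcup \bigl(\bP(\ker N)\setminus\bP(\ker_{\geq \lambda_2}N)\bigr),
\]
in which the first two pieces are a point and an affine line, and the third is an $\bA^2$-bundle over $\bP(\ker N/\ker_{\geq\lambda_2}N)\cong \bP^{2k-1}$ whose base I refine using the standard Schubert cell decomposition. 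A direct computation with Lemma \ref{lem:lamp} shows that the Jordan type $\lambda(l)$ is constant on each of these cells, equal respectively to $(\lambda_1-2,\lambda_2,1^{2k})$, $(\lambda_1,\lambda_2-2,1^{2k})$, and $(\lambda_1,\lambda_2,1^{2k-2})$; all three satisfy $\lambda_3\leq 1$, so the inductive hypothesis applies to each fiber. The main obstacle will be verifying that $\pi$ is Zariski-locally trivial over each Schubert cell in the third stratum, which requires an orthogonal-decomposition argument extending the proof of Lemma \ref{lem:affine} to the ``non-critical, $\chi(r)=r/2$'' situation at $r=\lambda_2$, which is not explicitly covered by the four cases of Section \ref{sec:calculation}.
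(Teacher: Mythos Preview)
Your overall strategy---apply Lemma \ref{lem:affine} and induct on $n$---is exactly the paper's proof, and your treatment of the $\chi=0$ case is correct. But the ``main obstacle'' you identify does not exist: in the subcase $\lambda=(\lambda_1,\lambda_2,1^{2k})$ with $\lambda_1>\lambda_2>1$, the part $\lambda_2$ is \emph{critical}, not non-critical. Since $m_\lambda(\lambda_2)=1$ is odd, the axioms of $\Omega$ force $\chi(\lambda_2)=\lambda_2/2$; and any $r$ with $\chi(r)=r/2>0$ is automatically critical, because condition (2) of criticality holds (for $r'<r$ one has $\chi(r')\le r'/2<r/2$) and condition (3) holds (for $r'>r$ one has $r'-\chi(r')\ge r'/2>r/2=r-\chi(r)$). (Note the ``$<$'' in the printed condition (3) is a typo for ``$>$'', as is clear from Example~\ref{ex:1} and from the fact that cases \ref{sec:case1}--\ref{sec:case4} exhaust all $r\in\und\lambda$.) This is also why the situation ``$r$ non-critical with $\chi(r)=r/2$'' that you say is not covered by Section~\ref{sec:calculation} simply never occurs.

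Consequently, for every $(\lambda,\chi)\in\Omega$ with $\lambda_3\le 1$, both hypotheses of Lemma \ref{lem:affine} are satisfied: the only non-critical $r\in\und\lambda$ is $r=1$ (or an $r$ with $\chi(r)=0$), and the only odd multiplicities are equal to $1$. The lemma then applies directly, and since each resulting $\lambda'=\lambda\pch{r,r}{r-1,r-1}$ or $\lambda\pch{r}{r-2}$ again has $\lambda'_3\le 1$, induction finishes the proof in one line---no ad hoc paving of the third stratum is needed.
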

\begin{proof} First we suppose that $\lambda_3\leq 1$, i.e. $\lambda=(a, b, 1, \ldots, 1)$ for some $a\geq b\geq 1$ or $l(\lambda)\leq 2$. Then $(\lambda, \chi)$ satisfies the assumption of Lemma \ref{lem:affine} and any possible $\lambda\pch{r,r}{r-1,r-1}$ or $\lambda\pch{r}{r-2}$ for $r\in \und{\lambda}$ is of the same form as $\lambda$. Therefore the result follows from Lemma \ref{lem:affine} and induction on $n$. The other case is similar.
\end{proof}

From now on, let us attempt to apply the argument in \cite{dclp} to our setting. Namely, suppose that $N$ is defined as in \ref{sec:standard} and it is in the $Sp_{2n}$-orbit parametrized by $(\lambda, \chi) \in \Omega$. Let us define an action of $\bG_m$ (1-dimensional torus over $\Ftbar$) on $V$ as follows.
\begin{enumerate}
\item If $r\in \und{\lambda}$ is not critical, then set $\alpha \cdot \vv^t_{r,2i-1} =\alpha^{c_{r,i}-r-1+2t} \vv^t_{r,2i-1}$ and $\alpha\cdot \vv^t_{r,2i}= \alpha^{-c_{r,i}-r-1+2t}\vv^t_{r,2i}$ for some $c_{r,i} \in \bZ$ for $i \in [1, m_\lambda(r)/2]$.
\item If $m_\lambda(r)$ is odd, then set $\alpha \cdot \vv^t_{r,2i} =\alpha^{c_{r,i}-r-1+2t} \vv^t_{r,2i}$ and $\alpha\cdot \vv^t_{r,2i+1}= \alpha^{-c_{r,i}-r-1+2t}\vv^t_{r,2i+1}$ for some $c_{r,i} \in \bZ$ for $i \in [1, (m_\lambda(r)-1)/2]$, and $\alpha\cdot \vv^{t}_{r,1}= \alpha^{-r-1+2t}\vv^t_{r,1}$.
\item Otherwise, i.e. if $r$ is critical and $m_\lambda(r)$ is even, then set 
$\alpha \cdot \vv^t_{r,2i-1} =\alpha^{c_{r,i}-r-1+2t} \vv^t_{r,2i-1}$ and $\alpha\cdot \vv^t_{r,2i}= \alpha^{-a_{r,i}-r-1+2t}\vv^t_{r,2i}$ for some $c_{r,i} \in \bZ$ for $i \in [2, m_\lambda(r)/2]$, and $\alpha\cdot \vv^{t}_{r,1}=\alpha^{-2\chi(r)-1+2t}\vv^{t}_{r,1}$ and $\alpha\cdot \vv^{t}_{r,2}=\alpha^{2\chi(r)-2r-1+2t}\vv^{t}_{r,2}$,
\end{enumerate}
We extend this $\bG_m$-action to $V$ by linearity. Then direct calculation shows that $\bG_m$ preserves $\br{\ , \ }$ on $V$, thus it gives a cocharacter $\gamma: \bG_m \rightarrow Sp_{2n}$. This defines the weight decomposition $\fg = \bigoplus_{i \in \bZ}\fg_i$ and direct calculation shows that $N \in \fg_2$. Let us define $\fp \colonequals \bigoplus_{i \geq 0} \fg_i$ and $P \subset G$ be the parabolic subgroup satisfying $\Lie P = \fp$.

From now on, we assume that $c_{r,i}$ in the definition of $\gamma$ are chosen generically. Then we are in the setting analogous to \cite[1.12]{dclp}, i.e. the parabolic subgroup $P$ is said to be canonically attached to $N$. Also, let $L' \colonequals G'\times H'\subset G$, where
$$G'\colonequals Sp(\spn{\{a_{r,1} \mid r\in \und{\lambda}, m_\lambda(r) \textup{ odd} \} \cup\{a_{r,i} \mid  r\in \und{\lambda}, i\in \{1,2\}, r \textup{ critical}, m_\lambda(r) \textup{ even}\}})$$
and $H'$ is the product of the following subgroups $H'_r$ of $G$.
\begin{enumerate}
\item If $r\in \und{\lambda}$ is not critical, then set $H'_r =G\cap \prod_{i \in[1,m(r)]}GL(\spn{\vv^t_{r,i} \mid t\in [1,r]})$.
\item If $m_\lambda(r)$ is odd, then set $H'_r =G\cap \prod_{i \in[2,m(r)]}GL(\spn{\vv^t_{r,i} \mid t\in [1,r]})$.
\item Otherwise,  set $H'_r =G\cap \prod_{i \in[3,m(r)]}GL(\spn{\vv^t_{r,i} \mid t\in [1,r]})$.
\end{enumerate}
Then it is clear that $H'$ is a product of general linear groups, $L'$ is a Levi subgroup of $G$, and also $N \in \Lie L'$. Furthermore, one can check that $N$ is a distinguished nilpotent element in $\Lie L'$ (cf. \cite[Proposition 5.3.(ii)]{ls12}) and the centralizer of the $\gamma$-action in $G$ is contained in $L'$. 

In order to use argument of \cite{dclp}, the following step is essential:
\begin{conj}[{cf. \cite[Proposition 3.2]{dclp}}]\label{conj:smooth} The intersection of $\cB_N$ with each $P$-orbit in $\cB$ is smooth. (See also \cite[11.18]{jan04}.)
\end{conj}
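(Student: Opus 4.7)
The plan is to adapt the Bialynicki–Birula argument of \cite[\S 3]{dclp} to the bad characteristic setting. The cocharacter $\gamma$ defines a $\bG_m$-action on $\cB$ whose attracting cells are exactly the $P$-orbits; since $N$ lies in the weight-$2$ part $\fg_2$, the subvariety $\cB_N \subset \cB$ is $\gamma$-stable, and so each intersection $\cO_w \cap \cB_N$ is a candidate for a BB plus-cell of $\cB_N$. Smoothness of these intersections would then follow from smoothness of their fixed loci together with the BB retraction being an affine-space fibration.

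First I would identify $\cB_N^{\gamma(\bG_m)}$ with the Springer fiber of $N$ inside the flag variety of the Levi $L'=G'\times H'$, using that the centralizer of $\gamma(\bG_m)$ in $G$ lies in $L'$. Because $N$ is distinguished in $\Lie L'$ and $H'$ is a product of general linear groups (where distinguished Springer fibers reduce to single points), this fixed locus essentially coincides with a Springer fiber of a distinguished nilpotent in the symplectic factor $G'$, which has strictly smaller rank. Induction on $n$, with the trivial base cases handled directly, would give smoothness of the fixed locus. Next I would show the BB retraction $\cO_w \cap \cB_N \to \cO_w \cap \cB_N^{\gamma(\bG_m)}$ is an affine-space bundle by analyzing the positive-weight part of the scheme-theoretic tangent space $T_\xi \cB_N$ at each fixed point $\xi$; this positive-weight part is controlled by the weight decomposition of the centralizer $\fg^N$ under $\gamma$, so the affine-bundle structure would follow once its rank is shown to be constant along each fixed component.

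The hard part will be controlling the scheme structure of $\cB_N$ at its fixed points. In characteristic $2$, the Springer fiber for $\Lie Sp_{2n}$ need not be a local complete intersection everywhere, and $T_\xi \cB_N$ can exceed its naively expected dimension, since the scheme-theoretic centralizer of $N$ behaves more subtly than in good characteristic (compare \cite[11.18, 13.3]{jan04}). Verifying rank-constancy of the positive-weight part of $T_\xi \cB_N$ along each component of the fixed locus — without which the BB fibration cannot be affine — appears to require either a direct local analysis using the explicit model in \ref{sec:standard} (in the spirit of Lemma \ref{lem:affine}, extending its case-by-case stratification to general $(\lambda,\chi)$), or a careful transfer of the corresponding statement from the adjoint Lie algebra of type $C$ as in the proof of Proposition \ref{prop:res}, with full control over scheme-theoretic tangent information along the isogeny. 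This final technical step is the heart of the proposed proof.
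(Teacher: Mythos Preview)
The statement you are attempting to prove is labelled a \emph{conjecture} in the paper and is not proved there; the paper explicitly explains that the standard \cite{dclp} argument breaks down in characteristic $2$ because the $P$-orbit of $N$ in $\bigoplus_{i\geq 2}\fg_i$ need not be dense, and then formulates Conjecture \ref{conj:smooth} (and the weaker Conjecture following it) as open problems. So there is no ``paper's own proof'' to compare against: the paper treats this as unresolved and only draws consequences \emph{assuming} it.

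Your proposal is essentially a sketch of the \cite{dclp} strategy with the acknowledgement that the key step is missing. Two concrete gaps:
\begin{enumerate}
\item The inductive step does not match. What induction on $n$ would give you is the conjecture for the smaller group, i.e.\ smoothness of the intersections of the Levi Springer fiber with the $P'$-orbits coming from a \emph{new} cocharacter $\gamma'$ attached to $N|_{\Lie L'}$. But what you need for the Bialynicki--Birula argument is smoothness of the $\gamma$-fixed locus of $\cB_N$ inside a single $P$-orbit, which is (a piece of) the full Springer fiber of $N$ for the Levi $L'$. Springer fibers are singular in general, so ``smoothness of the fixed locus'' does not follow from the inductive hypothesis as stated.
\item The affine-bundle step is exactly where the good-characteristic proof uses density of $P\cdot N$ in $\bigoplus_{i\geq 2}\fg_i$ to control the fibres uniformly. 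You correctly identify that in characteristic $2$ one would instead need rank-constancy of the positive-weight part of $T_\xi\cB_N$ along each fixed component, but you do not establish it; neither the explicit model of \ref{sec:standard} nor the isogeny trick of Proposition \ref{prop:res} gives control over scheme-theoretic tangent spaces without substantial further work. This is precisely the obstruction the paper isolates, and your proposal does not overcome it.
\end{enumerate}
In short, what you have written is a reasonable outline of where the difficulty lies, but it is not a proof, and the paper itself regards the statement as open.
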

A similar statement holds in good characteristic. It follows from the fact that the $P$-orbit of $N$ in $\bigoplus_{i \geq 2}\fg_2$ is dense (cf. \cite[5.9]{jan04}), which is no longer true in bad characteristic in general even when $N$ is distinguished. However, for Conjecture \ref{conj:smooth} it suffices to check the following weaker statement.
\begin{conj} The intersection of the $P$-orbit of $N$ in $\bigoplus_{i \geq 2}\fg_2$ with any linear subspace is smooth.
\end{conj}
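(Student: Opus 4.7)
The plan is to combine the explicit description of $\Ad(P)N$ afforded by the standard model of \ref{sec:standard} with a careful stratification argument tailored to characteristic $2$. Since $N$ is distinguished in $\Lie L'$ and $L'$ preserves the weight decomposition, we have $\Ad(L')N \subset \fg_2$ while $\Ad(U_P)N \subset N + \bigoplus_{i>2}\fg_i$, where $U_P$ is the unipotent radical of $P$. The orbit $\Ad(P)N$ thus fibers set-theoretically over the Levi-orbit $\Ad(L')N$, with affine fibers generated by $U_P$-translates, and the challenge is to control how this fibration meets an arbitrary linear subspace.

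The first step is to parametrize $\Ad(L')N$ explicitly in the basis $\{\vv^t_{r,s}\}$. Using that $L' \cong G' \times H'$ is a product of a single symplectic factor and several general linear factors, together with the explicit formula for $N$ in \ref{sec:standard}, one obtains $\Ad(L')N$ as the vanishing locus of an explicit system of linear and quadratic conditions on matrix coefficients. After this, $\Ad(U_P)$ acts on the result by polynomial translations along the weight filtration, and unipotency gives a regular fibration $\Ad(P)N \to \Ad(L')N$ with affine-space fibers.

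The second step is to use the stratification implicit in Sections \ref{sec:case1}--\ref{sec:case4}: each case partitions $\bP(\ker N)$ into pieces cut out by linear (or, in some instances, quadratic) conditions on the coefficients $a_{r,s}$. Lifted to $\bigoplus_{i\geq 2}\fg_i$, these pieces should become strata of $\Ad(P)N$ along which the orbit structure is a reduced affine bundle, so that intersections with arbitrary linear subspaces remain smooth. If this lifting succeeds uniformly across the four cases, the conjecture follows.

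The main obstacle, as already foreshadowed by a remark in Section \ref{sec:case1}, is that the scheme-theoretic defining equations in characteristic $2$ involve squares: the hyperplane that is set-theoretically defined by $a_{\tilde r,1} = 0$ is actually cut out by $(a_{\tilde r,1})^2 = 0$. To conclude genuine smoothness rather than merely smoothness after reduction, one must verify at the tangent-space level that the Frobenius twist does not obstruct separability of the orbit map $P \to \Ad(P)N$. This will likely force a finer case analysis mirroring \ref{sec:case1}--\ref{sec:case4}, and it is precisely this separability question---not the combinatorial description of the orbit---that will be the principal technical difficulty.
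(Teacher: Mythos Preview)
The statement you are attempting to prove is labeled as a \emph{conjecture} in the paper, not a theorem. The paper provides no proof for it; the author explicitly leaves it open and only remarks that the analogous statement in good characteristic follows from density of the $P$-orbit of $N$ in $\bigoplus_{i\geq 2}\fg_i$, a density which fails in characteristic $2$. There is therefore no paper proof to compare your proposal against.

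As for the proposal itself: it is a reasonable research outline, but it is not a proof. You correctly identify the structural decomposition $\Ad(P)N$ as a fibration over $\Ad(L')N$ with affine fibers, and you correctly locate the essential obstruction, namely that the scheme-theoretic equations defining the relevant loci in characteristic $2$ involve squares such as $(a_{\tilde r,1})^2$, so that set-theoretic smoothness does not immediately yield scheme-theoretic smoothness. However, your second step does not actually work as stated: the stratification of Sections~\ref{sec:case1}--\ref{sec:case4} is a stratification of $\bP(\ker N)$, not of the orbit $\Ad(P)N \subset \bigoplus_{i\geq 2}\fg_i$, and there is no evident lift of those strata to a stratification of the orbit with the properties you claim. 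The map $\cB_N \to \bP(\ker N)$ goes in the wrong direction for this purpose. Moreover, you end by conceding that the separability of the orbit map is ``the principal technical difficulty'' without offering any mechanism to resolve it. Since that separability question is precisely the content of the conjecture, your proposal amounts to restating the problem rather than solving it.
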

If the $P$-orbit of $N$ in $\bigoplus_{i \geq 2}\fg_2$ is dense, then its intersection with any linear subspace is still open in such a linear subspace, which implies the conjecture above. (A similar condition appears in \cite[5.1(f)]{lus05}.)

On the other hand, if we assume the conjecture then we have the following;
\begin{thm} Suppose that Conjecture \ref{conj:smooth} holds for any nilpotent element of $\Lie Sp_{2n}(\Ftbar)$ for any $n \in \bN$. Then any Springer fiber of $\Lie Sp_{2n}(\Ftbar)$ admits an affine paving.
\end{thm}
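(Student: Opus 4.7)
I proceed by induction on $n$, mimicking the Bia\l{}ynicki-Birula argument of de Concini--Lusztig--Procesi \cite{dclp} applied to the cocharacter $\gamma$ and parabolic $P$ with $\Lie P = \bigoplus_{i \geq 0}\fg_i$ constructed in the excerpt. The base cases $n \in \{0,1\}$ are immediate, and the case $N=0$ is covered by the Bruhat decomposition of $\cB$. Assume therefore $n \geq 2$ and $N \neq 0$. Since $N \in \fg_2$ is nonzero, the cocharacter $\gamma$ acts non-trivially on $\cB$, so $Z_G(\gamma)$ is a proper Levi subgroup of $G = Sp_{2n}$. Every proper Levi of $Sp_{2n}$ is isomorphic to $Sp_{2n''} \times \prod_i GL_{n_i}$ with $n'' < n$.

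The Bia\l{}ynicki-Birula decomposition of the smooth projective variety $\cB$ with respect to $\gamma(\bG_m)$ stratifies $\cB$ into attracting cells which, by the standard identification, coincide with the $P$-orbits $X_w$; each $X_w$ is an affine bundle (via the BB retraction) over a connected component $F_w$ of the fixed locus $\cB^{\gamma(\bG_m)} = \cB^{Z_G(\gamma)}$, and every such $F_w$ is isomorphic to the flag variety of the Levi $Z_G(\gamma)$. The subvariety $\cB_N$ is $\gamma$-stable (since membership of $N$ in a Borel subalgebra is invariant under scaling $N$), so one obtains $\cB_N = \bigsqcup_w (\cB_N \cap X_w)$.

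Conjecture \ref{conj:smooth} supplies the smoothness of each $\cB_N \cap X_w$. Being a smooth, closed, $\bG_m$-stable subvariety of the contracting cell $X_w$ inside the projective variety $\cB$, the $\gamma(\bG_m)$-action contracts it onto the slice $\cB_N \cap F_w$ and exhibits it as an affine bundle over that slice. This slice is the Springer fiber
\[ (\cB^{Z_G(\gamma)})_N \;\cong\; \cB^{Sp_{2n''}}_{N'} \;\times\; \prod_i \cB^{GL_{n_i}}_{N_i}, \]
where $N'$ and each $N_i$ are the corresponding components of $N$. The type $A$ factors $\cB^{GL_{n_i}}_{N_i}$ admit affine pavings by Spaltenstein's argument (see e.g.\ \cite[Section 11]{jan04}, which works in any characteristic for general linear groups), while $\cB^{Sp_{2n''}}_{N'}$ admits one by the inductive hypothesis since $n'' < n$. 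A product of affine pavings is an affine paving, and pulling it back along the affine bundle map produces an affine paving of $\cB_N \cap X_w$. Taking the disjoint union over $w$ gives the desired affine paving of $\cB_N$.

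The principal technical obstacle is the ``relative'' Bia\l{}ynicki-Birula step: verifying that a smooth, $\bG_m$-stable, closed subvariety of a BB cell of a smooth ambient is itself an affine bundle over its fixed-point slice, in a manner that lets affine pavings of the base pull back to the total space. One must also verify that the BB cells of $\cB$ for $\gamma$ are indeed the $P$-orbits (standard since $\Lie P = \bigoplus_{i \geq 0}\fg_i$) and that the $\gamma(\alpha)$-limits as $\alpha \to 0$ of points of $\cB_N$ remain in $\cB_N$ (which follows from $\gamma$-stability and the closedness of $\cB_N$ in the projective $\cB$). In essence, Conjecture \ref{conj:smooth} is the missing ingredient for translating the DCLP argument to characteristic $2$, standing in for the density of the $P$-orbit of $N$ in $\bigoplus_{i \geq 2}\fg_i$ which holds in good characteristic but can fail here.
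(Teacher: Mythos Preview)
There is a genuine gap. Your key step is the assertion that the Bia\l{}ynicki--Birula fixed-point slice $\cB_N \cap F_w$ is a Springer fiber for the Levi $Z_G(\gamma)$, split as $\cB^{Sp_{2n''}}_{N'} \times \prod_i \cB^{GL_{n_i}}_{N_i}$ with $N', N_i$ the ``components of $N$''. But for the paper's cocharacter one has $N \in \fg_2$, so its component in $\fg_0 = \Lie Z_G(\gamma)$ is zero; there is no nonzero nilpotent in the Levi to which you are reducing. Concretely, a $\gamma$-fixed flag decomposes as $F_i = \bigoplus_j (F_i \cap V_j)$, and the $N$-stability condition reads $N(F_i \cap V_j) \subset F_i \cap V_{j+2}$ --- a \emph{linked} constraint between flags in different weight spaces rather than a condition imposed weight-space by weight-space. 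For the paper's $\gamma$ the weight spaces in the ``$G'$-part'' of the basis typically have dimension $\geq 2$, and this linked condition cuts out a proper closed subvariety of $F_w$ that is not a Springer fiber for $Z_G(\gamma)$ (nor a product of such). Your inductive hypothesis therefore does not apply to the base, and the induction does not close.

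This is exactly why the paper's proof does not stop at the DCLP step. The DCLP argument together with Conjecture~\ref{conj:smooth} is invoked only to reduce to \emph{distinguished} nilpotent elements, via the Levi $L' = G' \times H'$ constructed just before the conjecture (note $N \in \Lie L'$, so genuine Springer fibers for $L'$ enter, and the $H'$-factor is handled by the type~$A$ result). Once $N$ is distinguished, the parameter $(\lambda, \chi)$ satisfies: every $r \in \und{\lambda}$ is critical and $m_\lambda(r) \leq 2$. These are precisely the hypotheses of Lemma~\ref{lem:affine}, which produces an affine paving of $\bP(\ker N)$ over each piece of which the projection $\pi\colon \cB_N \rightarrow \bP(\ker N)$ trivializes with fiber a Springer fiber for $Sp_{2(n-1)}$; induction on $n$ then finishes. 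The essential ingredient missing from your outline is this second step via Lemma~\ref{lem:affine} --- the Bia\l{}ynicki--Birula decomposition for $\gamma$ alone, even granting smoothness of the strata, does not by itself furnish the inductive structure you need in characteristic~$2$.
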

\begin{proof} If Conjecture \ref{conj:smooth} is true then one can use similar argument to \cite{dclp} and show that it suffices only to consider distinguished nilpotent elements. Now if $(\lambda, \chi)\in \Omega$ parametrizes a distinguished nilpotent orbit, then from the description of $L'$ above we can deduce that any $r \in \und{\lambda}$ is critical and $m_\lambda(r)\leq 2$. (cf. \cite[Proposition 5.3.(ii)]{ls12}) Now we use Lemma \ref{lem:affine} and induction on $n$ to finish the proof.
\end{proof}

\section{Some questions} \label{sec:question}
We conclude with some questions which naturally arise from our discussion and results. Firstly, in this paper we only covered the Springer theory for $\Lie Sp_{2n}(\Ftbar)$ (along with exotic cases), but as discussed in the introduction there are more Springer theories for classical types in bad characteristic. We expect that similar geometric argument can be applied to other cases as well. It is likely that the answer of the following question is positive.
\begin{question} Is it possible to find a formula similar to the main theorems for any Springer theory in classical types? (The good characteristic case is discussed in \cite{kim:betti}.)
\end{question}

On the other hand, observe that the statements themselves of Theorem \ref{thm:main} and \ref{thm:mainexo} have a combinatorial flavor and do not involve with geometry of Springer fibers. As mentioned in Section \ref{sec:equiv}, the total Springer representations in this case can be obtained in a purely combinatorial manner using symbols and the Lusztig-Shoji algorithm \cite{sho04}. (A similar statement also holds for the total Springer representations (for classical types) in good characteristic, which was the original motivation of the Lusztig-Shoji algorithm \cite{sho83, lus86}.) At this moment it is not clear to the author whether our main results can be deduced directly from combinatorial method. In this point of view, the following question is natural to ask.
\begin{question} Is there any other, mostly combinatorial, proof of Theorem \ref{thm:main} and \ref{thm:mainexo}?
\end{question}

If the answer of the above question is yes, then hopefully one may also generalize it to other types of symbols. For instance, Shoji \cite{sho01, sho02} defined the Hall-Littlewood functions for symbols of complex reflection groups of the form $G(r,p,n)$, which allowed him to define the Green functions attached to symbols as a transition matrix between power symmetric functions and Hall-Littlewood functions (up to degree), similar to the original definition of Green polynomials in type $A$. If one can find a combinatorial proof of our main theorems (and also the result of \cite{kim:betti}), then it is likely that this method can be generalized to other cases. Namely, we may ask the following.
\begin{question} Is there any formula similar to Theorem \ref{thm:main}, \ref{thm:mainexo}, and \cite[Theorem 3.1]{kim:betti} for Green functions attached to symbols for complex reflection groups discussed in \cite{sho01, sho02}?
\end{question}

\bibliographystyle{amsalphacopy}
\bibliography{sprchar2}

\end{document}